\newcommand{\klockan}{\the\hours:{\ifnum\minutes<10 0\fi}\the\minutes}
\newcommand{\tid}{\today\ \klockan}
\newcommand{\prtid}{\smash{\raise 10mm \hbox{\LaTeX ed \tid}}}
\renewcommand{\prtid}{}
\def\sectionmark#1{} 
\def\subsectionmark#1{}
\newcommand{\sectnr}{\ifnum \c@secnumdepth >\z@
                 \thesection.\hskip 1em\relax \fi}
\def\@evenhead{\footnotesize\rm\thepage\hfil\leftmark\hfil\llap{\prtid}}
\def\@oddhead{\footnotesize\rm\rlap{\prtid}\hfil\rightmark\hfil\thepage}
\def\tableofcontents{\section*{Contents} 
 \@starttoc{toc}}
\def\@biblabel#1{#1.}
\let\Thebibliography=\thebibliography
\renewcommand{\thebibliography}[1]{\def\@mkboth##1##2{}\Thebibliography{#1}
\addcontentsline{toc}{section}{References}
\frenchspacing 
\setlength{\@topsep}{0pt}
\setlength{\itemsep}{0pt}%
\setlength{\parskip}{0pt plus 2pt}%
}
\def\mdots@{\mathinner.\nonscript\!.%
 \ifx\next,.\else\ifx\next;.\else\ifx\next..\else
 \nonscript\!\mathinner.\fi\fi\fi}
\let\ldots\mdots@
\let\cdots\mdots@
\let\dotso\mdots@
\let\dotsb\mdots@
\let\dotsm\mdots@
\let\dotsc\mdots@
\def\vdots{\vbox{\baselineskip2.8\p@ \lineskiplimit\z@
    \kern6\p@\hbox{.}\hbox{.}\hbox{.}\kern3\p@}}
\def\ddots{\mathinner{\mkern1mu\raise8.6\p@\vbox{\kern7\p@\hbox{.}}%
    \raise5.8\p@\hbox{.}\raise3\p@\hbox{.}\mkern1mu}}
\let\Enumerate=\enumerate
\renewcommand{\enumerate}{\Enumerate%
\setlength{\@topsep}{0pt}
\setlength{\itemsep}{0pt}%
\setlength{\parskip}{0pt plus 1pt}%
\renewcommand{\theenumi}{\textup{(\alph{enumi})}}%
\renewcommand{\labelenumi}{\theenumi}%
}
\let\endEnumerate=\endenumerate
\renewcommand{\endenumerate}{\endEnumerate\unskip}
\def\@seccntformat#1{\csname the#1\endcsname.\quad}
\newcommand{\authortitle}[2]{\author{#1}\title{#2}\markboth{#1}{#2}}
\newcommand{\auth}[2]{{#1, #2.}}
\newcommand{\art}[6]{{\sc #1, \rm #2, \it #3 \bf #4 \rm (#5), \mbox{#6}.}}
\newcommand{\artin}[3]{{\sc #1, \rm #2,  in #3.}}
\newcommand{\artprep}[3]{{\sc #1, \rm #2, #3.}}
\newcommand{\book}[3]{{\sc #1, \it #2, \rm #3.}}
\newcommand{\AND}{{\rm and }}
\newcommand{\artnopt}[6]{{\sc #1, \rm #2, \it #3\/ \bf #4 \rm (#5), \mbox{#6}}}
\newcommand{\arXiv}[1]{{\tt \href{https://arxiv.org/abs/#1}{arXiv:#1}}}
\newtheoremstyle{descriptive}%
  {\topsep}   
  {\topsep}   
  {\rmfamily} 
  {}          
  {\bfseries} 
  {.}         
  { }         
  {}          
\newtheoremstyle{propositional}%
  {\topsep}   
  {\topsep}   
  {\itshape}  
  {}          
  {\bfseries} 
  {.}         
  { }         
  {}          
\newtheoremstyle{remarkstyle}%
  {\topsep}   
  {\topsep}   
  {\rmfamily}  
  {}          
  {\itshape} 
  {.}         
  { }         
  {}          
\theoremstyle{propositional}
\newtheorem{thm}{Theorem}[section]
\newtheorem{prop}[thm]{Proposition}
\newtheorem{lem}[thm]{Lemma}
\newtheorem{cor}[thm]{Corollary}
\theoremstyle{descriptive}
\newtheorem{deff}[thm]{Definition}
\newtheorem{example}[thm]{Example}
\newtheorem{remark}[thm]{Remark}
\renewenvironment{proof}[1][\proofname]{\par
  \pushQED{\qed}%
  \normalfont
  \trivlist
  \item[\hskip\labelsep
        \itshape
    #1\@addpunct{.}]\ignorespaces
}{%
  \popQED\endtrivlist\@endpefalse
}
\newcommand{\setm}{\setminus}
\renewcommand{\emptyset}{\varnothing}
{\catcode`p =12 \catcode`t =12 \gdef\eeaa#1pt{#1}}      
\def\accentadjtext#1{\setbox0\hbox{$#1$}\kern   
                \expandafter\eeaa\the\fontdimen1\textfont1 \ht0 }
\def\accentadjscript#1{\setbox0\hbox{$#1$}\kern 
                \expandafter\eeaa\the\fontdimen1\scriptfont1 \ht0 }
\def\accentadjscriptscript#1{\setbox0\hbox{$#1$}\kern   
                \expandafter\eeaa\the\fontdimen1\scriptscriptfont1 \ht0 }
\def\accentadjtextback#1{\setbox0\hbox{$#1$}\kern       
                -\expandafter\eeaa\the\fontdimen1\textfont1 \ht0 }
\def\accentadjscriptback#1{\setbox0\hbox{$#1$}\kern     
                -\expandafter\eeaa\the\fontdimen1\scriptfont1 \ht0 }
\def\accentadjscriptscriptback#1{\setbox0\hbox{$#1$}\kern 
                -\expandafter\eeaa\the\fontdimen1\scriptscriptfont1 \ht0 }
\def\itoverline#1{{\mathsurround0pt\mathchoice
        {\rlap{$\accentadjtext{\displaystyle #1}
                \accentadjtext{\vrule height1.593pt}
                \overline{\phantom{\displaystyle #1}
                \accentadjtextback{\displaystyle #1}}$}{#1}}
        {\rlap{$\accentadjtext{\textstyle #1}
                \accentadjtext{\vrule height1.593pt}
                \overline{\phantom{\textstyle #1}
                \accentadjtextback{\textstyle #1}}$}{#1}}
        {\rlap{$\accentadjscript{\scriptstyle #1}
                \accentadjscript{\vrule height1.593pt}
                \overline{\phantom{\scriptstyle #1}
                \accentadjscriptback{\scriptstyle #1}}$}{#1}}
        {\rlap{$\accentadjscriptscript{\scriptscriptstyle #1}
                \accentadjscriptscript{\vrule height1.593pt}
                \overline{\phantom{\scriptscriptstyle #1}
                \accentadjscriptscriptback{\scriptscriptstyle #1}}$}{#1}}}}
\def\itunderline#1{{\mathsurround0pt\mathchoice
        {\rlap{$\underline{\phantom{\displaystyle #1}
                \accentadjtextback{\displaystyle #1}}$}{#1}}
        {\rlap{$\underline{\phantom{\textstyle #1}
                \accentadjtextback{\textstyle #1}}$}{#1}}
        {\rlap{$\underline{\phantom{\scriptstyle #1}
                \accentadjscriptback{\scriptstyle #1}}$}{#1}}
        {\rlap{$\underline{\phantom{\scriptscriptstyle #1}
                \accentadjscriptscriptback{\scriptscriptstyle #1}}$}{#1}}}}
\newcommand{\Cpt}{{C_{s,p}}}
\newcommand{\Csp}{{C_{s,p}}}
\newcommand{\cpt}{{\capp_{s,p}}}
\newcommand{\cpa}{{\capp_{p,|t|^a}}}
\newcommand{\csp}{{\capp_{s,p}}}
\DeclareMathOperator{\diam}{diam}
\DeclareMathOperator{\dist}{dist}
\DeclareMathOperator{\Outer}{outer}
\DeclareMathOperator{\Div}{div}
\DeclareMathOperator{\capp}{cap}
\newcommand{\grad}{\nabla}
\DeclareMathOperator{\Lip}{Lip}
\newcommand{\Lipc}{{\Lip_c}}
\DeclareMathOperator*{\essliminf}{ess\,lim\,inf}
\newcommand{\bdry}{\partial}
\newcommand{\bdy}{\bdry}
\newcommand{\de}{\delta}
\newcommand{\eps}{\varepsilon}
\newcommand{\la}{\lambda}
\newcommand{\Om}{\Omega}
\newcommand{\gt}{\tilde{g}}
\newcommand{\uhat}{\hat{u}}
\newcommand{\ub}{\bar{u}}
\newcommand{\vb}{\bar{v}}
\newcommand{\ut}{\tilde{u}}
\renewcommand{\phi}{\varphi}
\newcommand{\p}{{$p\mspace{1mu}$}}
\newcommand{\R}{\mathbf{R}}
\newcommand{\Rn}{{\R^n}}
\newcommand{\eR}{{\overline{\R}}}
\def\cprime{{\mathsurround0pt$'$}}
\newcommand{\limplus}{{\mathchoice{\vcenter{\hbox{$\scriptstyle +$}}}
  {\vcenter{\hbox{$\scriptstyle +$}}}
  {\vcenter{\hbox{$\scriptscriptstyle +$}}}
  {\vcenter{\hbox{$\scriptscriptstyle +$}}}
}}
\newcommand{\limminus}{{\mathchoice{\vcenter{\hbox{$\scriptstyle -$}}}
  {\vcenter{\hbox{$\scriptstyle -$}}}
  {\vcenter{\hbox{$\scriptscriptstyle -$}}}
  {\vcenter{\hbox{$\scriptscriptstyle -$}}}
}}
\newcommand{\clB}{\itoverline{B}}
\newcommand{\clOm}{\overline{\Om}}
\newcommand{\LL}{\mathcal{L}}
\newcommand{\UU}{\mathcal{U}}
\newcommand{\UUt}{\widetilde{\UU}}
\newcommand{\uP}{\itoverline{P}}     
\newcommand{\lP}{\itunderline{P}} 
\newcommand{\uQ}{\itoverline{Q}}     
\newcommand{\vt}{\tilde{v}}
\DeclareMathOperator{\supp}{supp}
\newcommand{\E}{\mathcal{E}}
\newcommand{\K}{\mathcal{K}}
\newcommand{\clG}{\itoverline{G}}
\newcommand{\Vsp}{V^{s,p}}
\newcommand{\VspOm}{V^{s,p}(\Om)}
\newcommand{\Vspo}{V^{s,p}_0}
\newcommand{\VspoOm}{V^{s,p}_0(\Om)}
\newcommand{\Wsp}{W^{s,p}}
\newcommand{\pot}{\mathfrak{R}}
\newcommand{\pothat}{\widehat{\pot}}
\newcommand{\La}{\Lambda}
\newcommand{\Omc}{{\Om^c}}
\newcommand{\Gc}{G^c}
\newcommand{\Ec}{E^c}
\newcommand{\Et}{\widetilde{\E}}
\newcommand{\Kt}{\widetilde{\K}}
\newcommand{\Ht}{\widetilde{H}}
\newcommand{\A}{\mathcal{A}}
\newcounter{saveenumi}
\numberwithin{equation}{section}
\newcommand{\eqv}{\ensuremath{
\mathchoice{\quad \Longleftrightarrow \quad}{\Leftrightarrow}
                {\Leftrightarrow}{\Leftrightarrow}}}
\newcommand{\imp}{\ensuremath{
\mathchoice{\quad \Longrightarrow \quad}{\Rightarrow}
                {\Rightarrow}{\Rightarrow}}}
\newenvironment{ack}{\medskip{\it Acknowledgement.}}{}
\begin{document}

\authortitle{Anders Bj\"orn, Jana Bj\"orn and Minhyun Kim}
            {Perron solutions and boundary regularity
 for nonlocal nonlinear Dirichlet problems}

 \author{
 Anders Bj\"orn \\
 \it\small Department of Mathematics, Link\"oping University, SE-581 83 Link\"oping, Sweden\\
 \it\small and Theoretical Sciences Visiting Program, Okinawa Institute of\\
 \it\small
  Science and Technology Graduate University, Onna, 904-0495, Japan \\
 \it \small anders.bjorn@liu.se, ORCID\/\textup{:} 0000-0002-9677-8321
 \\
 \\
 Jana Bj\"orn \\
 \it\small Department of Mathematics, Link\"oping University, SE-581 83 Link\"oping, Sweden\\
 \it\small and Theoretical Sciences Visiting Program, Okinawa Institute of\\
 \it\small
  Science and Technology Graduate University, Onna, 904-0495, Japan \\
 \it \small jana.bjorn@liu.se, ORCID\/\textup{:} 0000-0002-1238-6751
 \\
 \\
 Minhyun Kim \\
 \it\small Department of Mathematics \& Research Institute for Natural Sciences, \\
 \it\small Hanyang University, 04763 Seoul, Republic of Korea \\
 \it \small minhyun@hanyang.ac.kr, ORCID\/\textup{:} 0000-0003-3679-1775
 }

\date{Preliminary version, \today}
\date{}

\maketitle

\noindent{\small
{\bf Abstract}.}
For nonlinear operators of fractional \p-Laplace type, we consider two types of solutions
to the nonlocal Dirichlet problem: Sobolev solutions based on fractional Sobolev spaces 
and Perron solutions based on superharmonic functions.
These solutions give rise to two different concepts of regularity for boundary points, 
namely Sobolev and Perron regularity. 
We show that these two notions are equivalent
and we also provide several characterizations of regular boundary points. 
Along the way, we give a new definition of Perron solutions, 
which is applicable to arbitrary exterior Dirichlet
data $g: \Omega^c \to [-\infty,\infty]$.
We obtain resolutivity 
results for these Perron solutions,
and show that the Sobolev  and Perron solutions coincide 
for a large class of exterior Dirichlet data.
This also implies invariance of the Perron solutions under perturbations
on sets of zero fractional capacity.
A uniqueness result for the Dirichlet problem is also obtained for 
the class of bounded  solutions
taking prescribed continuous 
exterior data quasieverywhere on the boundary.

\medskip

\noindent {\small \emph{Key words and phrases}:
Barrier,
boundary regularity,
capacity,
fractional \p-Laplacian,
nonlocal nonlinear Dirichlet problem,
Perron solution,
resolutivity,
Sobolev solution.
}

\medskip

\noindent {\small \emph{Mathematics Subject Classification} (2020):
Primary:
35R11. 
Secondary:  
31C15, 
31C45, 
35J66. 
}

\medskip

\noindent {\small \emph{Funding}: 
AB and JB were supported by the Swedish Research Council,
  grants 2018-04106, 2020-04011 and 2022-04048.
MK was supported by the National Research Foundation of Korea (NRF) 
grant funded by the Korean government (MSIT) (RS-2023-00252297).
}

\medskip

\noindent
{\small {\bf Declarations}. \\
\emph{Conflicts of interest}:   None.
\\
\emph{Availability of data and material}:   Not applicable.
\\
\emph{Code availability}:   Not applicable.
    } 

\section{Introduction}

The aim of this paper is 
to study the Perron method and boundary regularity for 
the nonlocal nonlinear equation $\LL u =0$.
The operator $\LL$ is of the form
\begin{equation*}
\mathcal{L}u(x) = 2 \,\mathrm{p.v.} \int_{\R^n} |u(x)-u(y)|^{p-2} (u(x)-u(y)) k(x, y) \,dy,
\end{equation*}
where $1<p<\infty$, $n \ge 1$ and $k: \R^n \times \R^n \to [0, \infty]$ is a symmetric measurable kernel
that satisfies the ellipticity condition
\begin{equation} \label{eq-comp-(x,y)}
\frac{\Lambda^{-1}}{|x-y|^{n+sp}} \leq k(x, y) \leq \frac{\Lambda}{|x-y|^{n+sp}},
\end{equation}
with $0<s<1$ and $\La \ge1$.
Note that $\LL$ is the fractional \p-Laplacian $(-\Delta_p)^s$ when 
$k(x,y)=|x-y|^{-n-sp}$.

Nonlocal and fractional problems have been studied for a long time,
see e.g.\ \cite{AH,BH86}, but in recent years, starting perhaps
with the pioneering paper by Caffarelli--Silvestre~\cite{CafSil},
they have received
a lot more attention (see e.g.\ our list of references).
Fractional operators also have significant applications in various contexts,
such as 
the obstacle problem \cite{CF13,Sil07}, 
materials science \cite{Bat06,BC99}, 
phase transitions \cite{CSM05,FV11,SV09}, 
soft thin films \cite{Kur06}, quasi-geostrophic dynamics \cite{CV10} 
and image processing \cite{GO08}.

We assume in the rest of the introduction 
that  $\Om \subset \Rn$ is a nonempty bounded open set.
In the \emph{Dirichlet problem} one seeks 
a solution of $\LL u=0$
in $\Om$ which takes on some
prescribed exterior Dirichlet data 
$g: \Omc \to \eR:=[-\infty,\infty]$. 
Due to the nonlocal nature of the problem the Dirichlet data
have to be prescribed on the whole complement $\Omc$ of $\Om$.
For this reason, the Dirichlet problem is 
also called the exterior value problem or the complement value problem, 
but we will call it the Dirichlet problem for short.

As  in the classical setting of harmonic functions one can in general
not solve the Dirichlet problem so that the prescribed boundary values 
are taken as limits on $\bdy \Om$,
even if $g$ is bounded and continuous.
Hence, the Dirichlet problem has to be understood in a relaxed sense.
The two main ways of doing so are the
Sobolev solutions (taking the boundary values in the Sobolev sense) 
and the Perron solutions.

A natural question is then which boundary points are \emph{regular}, 
i.e.\ whether all
continuous Dirichlet data are  attained as limits of the 
corresponding solutions.
The two notions of solutions lead to two different types of boundary regularity:
Sobolev regularity and Perron regularity.
One of our main results is that 
these two types of regularity are equivalent.
To show this, we  first need to 
substantially develop the Perron method.
For classical harmonic functions it
goes back to 
Perron~\cite{Per23} and independently Remak~\cite{remak}, see
the end of the introduction for further historical remarks.
Our results on boundary regularity are summarized in Theorem~\ref{thm-main-reg}.

To the best of our knowledge, the solvability of the nonlocal Dirichlet problem in
the Sobolev sense was first established by Hoh--Jacob~\cite{HJ96} when $p=2$. 
It was later extended to the nonlocal Dirichlet problems associated with
(not necessarily symmetric)
kernels $k$ by Felsinger--Kassmann--Voigt~\cite{FKV15}. 
For general $p$ and operators $\LL$ as above, 
it was solved using the direct method of calculus of variations 
by Di Castro--Kuusi--Palatucci~\cite{DCKP16}. 
These results together with the interior regularity results prove the existence and 
uniqueness of an $\LL$-harmonic function (a weak solution that is continuous in $\Om$) 
solving the nonlocal Dirichlet problem with Sobolev data $g \in \Vsp(\Om)$, 
see Section~\ref{sec-D-obstacle}.
We denote this solution by $Hg$.

Perron solutions of  nonlocal equations were studied by Bliedtner--Hansen~\cite{BH86} 
for the fractional linear 
Laplacian $(-\Delta)^s$, Lindgren--Lindqvist~\cite{LL17} for 
the fractional nonlinear \p-Laplacian $(-\Delta_p)^s$ 
and by Korvenp\"a\"a--Kuusi--Palatucci~\cite{KKP17} for general $\LL$ as here. 
However, the definitions of Perron solutions in these works are all different 
even when restricted to $(-\Delta)^s$, 
and the one in~\cite{KKP17} seems unnaturally restrictive. 
Before discussing the differences between these definitions, 
we provide our definition of Perron solution. 
For $\LL$-superharmonic functions we follow the definition
in~\cite{KKP17}, see Definition~\ref{def:superharmonic}.

\begin{deff} \label{def-Perron}
Let $g: \Omc \to \eR$. 
The \emph{upper class}  $\mathcal{U}_{g}=\mathcal{U}_{g}(\Om)$ 
of $g$ consists of all functions 
$u: \R^{n} \to (-\infty,\infty]$ such that
\begin{enumerate}
\item
$u$ is $\LL$-superharmonic in $\Omega$,
\setcounter{saveenumi}{\value{enumi}}
\item \label{P-b}
$u$ is bounded from below in $\Om$, 
\item \label{P-c}
$\liminf_{\Om \ni y \to x} u(y) \geq g(x)$ for all $x \in \partial \Omega$,
\item \label{P-d}
$u \ge g$ in $\Omc$.
\end{enumerate}

The \emph{upper Perron solution} of $g$ is defined by
\[ 
    \uP g (x)= \uP_\Om g (x) = \inf_{u \in \UU_g}  u(x), \quad x \in \R^n,
\]
and the \emph{lower Perron solution} of $g$ by 
$\lP g = - \uP (-g)$.
(As usual, $\inf \emptyset = \infty$.)
\end{deff}

We usually omit $\Om$ from the notation.
When $\lP g = \uP g$  we denote the common solution by $Pg$ and say that
$g$ is \emph{resolutive}.

Our definition is essentially the same
as the one given in Lindgren--Lindqvist~\cite[Section~4]{LL17}.
(In \cite{LL17} they only considered bounded continuous functions $g$ and operators 
without kernels.)
On the other hand, it differs from the definition
in Korvenp\"a\"a--Kuusi--Palatucci~\cite[Definition~2]{KKP17} in 
conditions \ref{P-c} and~\ref{P-d}.
In particular, we have an inequality in \ref{P-d}, 
which makes it possible to define Perron solutions 
for arbitrary data $g$ and not
only for $g \in L^{p-1}_{sp}(\R^n)$ as in~\cite{KKP17}.
More importantly, it immediately gives the comparisons
\begin{equation} \label{eq-Pg1-Pg2}
    \lP g_1 \le \lP g_2 
\quad \text{and} \quad 
    \uP g_1 \le \uP g_2 \quad  \text{in } \R^n
\qquad \text{whenever } g_1 \le g_2 \text{ in } \Omega^c,
\end{equation}
which do not appear in~\cite{KKP17}.
Nevertheless, we are able to show that the Perron solutions
used in~\cite{KKP17} essentially coincide with our Perron solutions whenever
they are defined, 
see Theorem~\ref{thm-Perron-meas}.
In particular they do satisfy~\eqref{eq-Pg1-Pg2}.

The analogue of \ref{P-d} in~\cite{KKP17}
is that $u = g$ a.e.\ in $\Omc$. 
Since one can freely modify $u(x)$  arbitrarily (with a value in 
$(-\infty,\infty]$) for any particular $x \in \Omc$, 
one sees that with their definition
either $\uP g \equiv -\infty$ outside $\Om$ or $\uP g \equiv \infty$ in $\Rn$, 
contradicting some of their statements.
This problem disappears if their condition (iv) is replaced by 
requiring that $u\equiv g$  outside $\Omc$.
Note that the infimum defining $\uP g$ is typically 
taken over an uncountable collection.

In the earlier work by Bliedtner--Hansen~\cite{BH86},
a different upper class was introduced
for the linear case $p=2$.
They used hyperharmonic functions
which are lower semicontinuous in $\R^n$
and lower $\mathcal{P}$-bounded in $\R^n$
instead of superharmonic functions
which are bounded from below in $\Om$
and lower semicontinuous only  in $\Om$.
(Hyperharmonic functions and lower $\mathcal{P}$-boundedness
will not be discussed here. See \cite[p.~343]{BH86}.)
They obtained the natural properties such as
\eqref{eq-Pg1-Pg2} and 
\begin{equation} \label{eq-lP<=uP}
    \lP g \le \uP g 
\end{equation}
and studied Perron regularity with respect to their Perron solutions.
Sobolev spaces, Sobolev  
solutions and Sobolev regularity were not considered 
in~\cite{BH86}.
For our operator $\LL$, the fundamental inequality~\eqref{eq-lP<=uP}
follows from the comparison principle in Theorem~\ref{thm-comp},
which is due to  Korvenp\"a\"a--Kuusi--Palatucci~\cite[Theorem~16]{KKP17}.

In view of our results below,
Definition~\ref{def-Perron} of Perron solutions seems natural.
We now turn to our main results on Perron solutions,
which substantially extend the nonlocal nonlinear theory 
initiated in the
two papers \cite{KKP17,LL17}. 
Here, the tail space $L^{p-1}_{sp}(\Omega^c)$ is defined by replacing 
$\Rn$ in \eqref{eq-tail} by $\Omega^c$.
By an \emph{$\LL$-harmonic function} 
in $\Om$ we mean a weak solution that is continuous in $\Om$.

\begin{thm}\label{thm-harmonicity}
Let $g : \Omc \to \eR$.
\begin{enumerate}
\item \label{thm-harmonicity-a}
Then $\uP g$ is either continuous in $\Om$ or identically\/ $\pm\infty$ in $\Om$.
\item \label{thm-harmonicity-b}
If $g \in L^{p-1}_{sp}(\Omega^c)$,
then $\uP g$ is either $\LL$-harmonic in $\Om$ or identically\/ $\pm\infty$ in $\Om$.
\end{enumerate}
\end{thm}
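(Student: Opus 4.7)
The plan is to apply the classical Perron-method machinery, adapted to the nonlocal nonlinear operator $\LL$: Poisson modification in small balls, a downward-filtering (Choquet) argument, and a Harnack-convergence dichotomy. The case $\UU_g = \emptyset$ gives $\uP g \equiv \infty$ in $\Rn$, so assume $\UU_g \neq \emptyset$. Working on a fixed connected component of $\Om$, it suffices to prove the dichotomy on each ball $B \Subset \Om$ and then to promote this to all of $\Om$ by an open--closed argument applied to the set where $\uP g \equiv -\infty$.

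The first step is the Poisson modification. Given $u \in \UU_g$ lying in the tail space $L^{p-1}_{sp}(\Rn)$, let $u_B$ be the Sobolev solution of $\LL v = 0$ in $B$ with exterior data $u|_{\Rn \setm B}$, extended by $u$ outside $B$. The comparison principle in Theorem~\ref{thm-comp} gives $u_B \le u$ in $B$, and a pasting lemma for $\LL$-superharmonic functions shows $u_B$ is $\LL$-superharmonic on $\Om$; since $u_B = u$ on $\Omc$, the conditions \ref{P-b}--\ref{P-d} of Definition~\ref{def-Perron} are preserved, so $u_B \in \UU_g$, and $u_B$ is $\LL$-harmonic in $B$. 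Under the hypothesis of part~\ref{thm-harmonicity-b}, the tail bound $g \in L^{p-1}_{sp}(\Omc)$ together with the lower bound~\ref{P-b} forces every $u \in \UU_g$ into $L^{p-1}_{sp}(\Rn)$, so this modification is always available.

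Next I would apply Choquet's topological lemma to the modified family $\UU_g^B := \{u_B : u \in \UU_g \cap L^{p-1}_{sp}(\Rn)\}$, which is downward filtering since $\min(u,v)$ is $\LL$-superharmonic and a further modification only lowers it. This produces a decreasing sequence $\{v_k\} \subset \UU_g^B$ whose pointwise infimum $v$ has the same lower semicontinuous regularization as $\uP g$ on $\Om$. Since each $v_k$ is $\LL$-harmonic in $B$, the Harnack convergence theorem (the interior Harnack inequality combined with passage to the limit in the weak formulation) yields that $v$ is either $\LL$-harmonic in $B$ or identically $-\infty$ there. Combining $v \le \uP g$, the coincidence of lsc regularizations, and continuity of $\LL$-harmonic functions then gives part~\ref{thm-harmonicity-b}.

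For part~\ref{thm-harmonicity-a} the tail hypothesis is absent, so the Sobolev solution in the Poisson modification need not exist. The workaround will be to truncate $u \in \UU_g$ from above by a level $M$: the function $u^M := \min(u,M)$ is bounded, hence lies in every tail space, and is still $\LL$-superharmonic. The modification $(u^M)_B$ is then continuous in $B$, and sending $M \to \infty$ presents $\uP g$ on $B$ as a monotone limit of continuous functions, which together with the lower semicontinuity of $\uP g$ suffices for continuity. The main obstacle I anticipate is Step~3, the Harnack-convergence step: verifying it in the nonlocal nonlinear setting requires uniform control of the tail integrals along the monotone sequence, and this control must be extracted from the superharmonicity of the underlying functions rather than from a common $L^{p-1}_{sp}$ bound. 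Once this is in place, the rest is the standard Perron-method machinery.
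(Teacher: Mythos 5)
Your high-level machinery (Poisson modification, downward filtering, Harnack convergence) is the right one and is what the paper uses, but several of the specific claims are wrong and one of them leaves a genuine hole.

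First, the claim that $g\in L^{p-1}_{sp}(\Omc)$ plus \ref{P-b} forces every $u\in\UU_g$ into $L^{p-1}_{sp}(\Rn)$ is false: you only control $u_\limminus$ (which in any case is already in $L^{p-1}_{sp}(\Rn)$ by Definition~\ref{def:superharmonic}(d)), while $u_\limplus$ is completely free. Fortunately this does not matter, because the Poisson modification (Theorem~\ref{thm-Poisson}) requires only $\LL$-superharmonicity in $\Om$ and no extra global tail bound; you do not need Sobolev exterior data, and your truncation workaround for part~\ref{thm-harmonicity-a} is both unnecessary and broken, since $u^M=\min\{u,M\}$ need not satisfy \ref{P-c} or \ref{P-d} when $g$ exceeds $M$ somewhere on $\Omc$.

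The more serious gap concerns what you actually obtain after the Harnack-convergence step. Your limit $v$ is $\LL$-harmonic (or $\equiv-\infty$) and agrees with $\uP g$ on $\Om$, but nothing in your argument identifies $v$ with $\uP g$ \emph{on $\Omc$}. By~\eqref{eq-uP=g}, $\uP g=g$ there, whereas $v$ (built from Poisson modifications of arbitrary $u\ge g$ in $\Omc$) is only $\ge g$. Because $\LL$-harmonicity in $\Om$ is a statement about a function on all of $\Rn$, concluding that $\uP g$ itself is $\LL$-harmonic in part~\ref{thm-harmonicity-b} requires first replacing each competitor by one that equals $g$ exactly on $\Omc$; this is where the tail hypothesis $g\in L^{p-1}_{sp}(\Omc)$ actually enters, via the pasting result Lemma~\ref{lem-ut-Omc-change} and the reduction to the class $\UUt_g$ in Theorem~\ref{thm-Perron-meas}. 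Without this step your argument proves only continuity in $\Om$, i.e.\ part~\ref{thm-harmonicity-a}, never part~\ref{thm-harmonicity-b}.

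Finally, working on a connected component and propagating the $-\infty$ case by an open--closed argument yields only a componentwise dichotomy and is strictly weaker than the stated theorem. The theorem asserts the dichotomy on all of $\Om$ simultaneously, even when $\Om$ is disconnected; this is a nonlocal effect captured by Harnack's convergence theorem (Theorem~\ref{thm-harnack}) applied once on $\Om$, which the paper exploits through an exhaustion by polyhedra $G_1\subset G_2\subset\dots$ together with a single diagonal sequence built on a countable dense set $Z\subset\Om$, rather than a ball-by-ball continuation.
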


Theorem~\ref{thm-harmonicity} 
(which follows from Theorems~\ref{thm-Perron} and~\ref{thm-Perron-meas})
differs in two aspects from the corresponding 
nonlinear local case for e.g.\ the local \p-Laplace equation $-\Delta_pu=0$.
First of all, $\Omega$ is not necessarily connected in Theorem~\ref{thm-harmonicity} whereas 
similar statements for local equations only apply
to each connected  component of $\Om$ separately.
Second, the upper and the lower Perron solutions 
of the local \p-Laplace equation
are always \p-harmonic (or identically $\pm \infty$) in a domain 
for arbitrary (even nonmeasurable) boundary data. 
However, for nonmeasurable exterior data  $g$,  
the continuity of $\uP g$ and $\lP g$ is the best we can obtain in the nonlocal case. 
Indeed, for any particular $x \in \Omc$ and $u \in \UU_g$, 
we can freely modify
$u(x)$ as long as $g(x) \le u(x) \ne -\infty$. 
Hence, 
\begin{equation} \label{eq-uP=g}
\text{either} \quad 
\uP g=g \text{ in } \Omc 
\quad \text{or} \quad 
\uP g \equiv \infty  \text{ in } \Rn,
\end{equation}
depending on whether $\UU_g\neq \emptyset$ or not.
In particular,
if $g$ is nonmeasurable, then $\uP g$ cannot be $\LL$-harmonic  in $\Om$
because $\LL$-harmonicity requires measurability in $\Rn$, 
and thus in $\Om^c$.

We refer the reader to Lindgren--Lindqvist~\cite[Theorem~22]{LL17} 
and Korvenp\"a\"a--Kuusi--Palatucci~\cite[Theorem~2]{KKP17} 
for earlier results similar to the second assertion in Theorem~\ref{thm-harmonicity}.

The next main results concern resolutivity, 
which is one of the central questions for the Perron method.
It also relates Perron solutions to the Sobolev solutions $Hg$,
which is a key step to showing  
that Sobolev and Perron regularity are equivalent.
Here, $\VspOm$ is a suitable fractional Sobolev space associated with the
operator $\LL$, see \eqref{eq-def-Vsp} for the definition.

\begin{thm}\label{thm-main-Hg=Pg}
Let  $g \in C(\R^n)\cap \VspOm$
and $h:\Rn \to \eR$ be such that 
$h=0$ in $\Ec$, where the Sobolev capacity $\Csp(E)=0$. 
Then both $g$ and $g+h$ are  resolutive and 
\[
   H(g+h)=P(g+h)=Hg+ h\chi_{\Omc} = Pg+ h\chi_{\Omc}.
\]
Moreover, both $Pg$ and $P(g+h)$ are $\LL$-harmonic in $\Om$.
\end{thm}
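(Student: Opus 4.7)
The plan is to proceed in two stages: first establish $Hg = Pg$ when $g \in C(\Rn)\cap \VspOm$ (the case without $h$), and then extend the conclusion to $g+h$ by adding a capacity-zero barrier. Once $P(g+h) = Hg + h\chi_{\Omc}$ in $\Om$ is known, the $\LL$-harmonicity of $Pg$ and $P(g+h)$ in $\Om$ is inherited immediately from that of $Hg$.

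For $Hg = Pg$, I would sandwich $\uP g \le Hg \le \lP g$ in $\Om$ and then invoke \eqref{eq-lP<=uP}. Since $g$ is continuous on $\Rn$, $Hg$ attains the boundary value $g$ continuously at every Sobolev-regular point of $\bdy\Om$; by a Kellogg-type result (expected to be available earlier in the paper), the set $I\subset\bdy\Om$ of irregular boundary points satisfies $\csp(I)=0$. I pick a nonnegative $\LL$-superharmonic function $w$ on $\Rn$ with finite tail and $w\equiv+\infty$ on $I$, and for $\eps>0$ define
\[
u_\eps = \begin{cases} Hg + \eps w, & \text{in } \Om, \\ g, & \text{in } \Omc. \end{cases}
\]
Then $u_\eps$ is $\LL$-superharmonic in $\Om$ (sum of an $\LL$-harmonic and a superharmonic function), bounded below, agrees with $g$ on $\Omc$, and satisfies $\liminf_{\Om\ni y\to x}u_\eps(y)\ge g(x)$ for every $x\in\bdy\Om$ (continuity of $Hg$ at regular $x$, trivial at $x\in I$ since $w=+\infty$ there). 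Hence $u_\eps\in\UU_g$, giving $\uP g(x)\le Hg(x)+\eps w(x)$ pointwise. Sending $\eps\downarrow 0$ at points where $w<\infty$ and using the continuity of $\uP g$ from Theorem~\ref{thm-harmonicity}~\ref{thm-harmonicity-a} gives $\uP g\le Hg$ throughout $\Om$. The same argument applied to $-g$ yields $\lP g\ge Hg$, and \eqref{eq-lP<=uP} forces equality, so $g$ is resolutive with $Pg = Hg$ in $\Om$ and $Pg = g = Hg$ on $\Omc$ by \eqref{eq-uP=g}.

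For the perturbation, set $E=\{x\in\Omc:h(x)\ne 0\}$, so $\Csp(E)=0$ by hypothesis. Choose a nonnegative $\LL$-superharmonic function $v$ on $\Rn$ with finite tail and $v\equiv+\infty$ on $E$. For $\eps>0$ consider
\[
\tilde u_\eps(x) = \begin{cases} Hg(x)+\eps v(x), & x\in\Om, \\ g(x)+h(x)+\eps v(x), & x\in\Omc, \end{cases}
\]
with the convention $+\infty+a=+\infty$. Every condition of Definition~\ref{def-Perron} can be verified: on $\Omc\setminus E$ one has $\tilde u_\eps = g+\eps v\ge g+h$; on $E$ it equals $+\infty\ge g+h$; the boundary $\liminf$-condition is only strengthened by the nonnegative $\eps v$ term; and $\LL$-superharmonicity and boundedness below in $\Om$ are handled as in Step~1. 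Hence $\tilde u_\eps\in\UU_{g+h}$, so $\uP(g+h)\le Hg+h\chi_{\Omc}$ after letting $\eps\downarrow 0$ along points where $v<\infty$ and using continuity of $\uP(g+h)$. The symmetric argument gives $\lP(g+h)\ge Hg+h\chi_{\Omc}$, and $\lP(g+h)\le\uP(g+h)$ from Theorem~\ref{thm-comp} produces resolutivity with $P(g+h)=Hg+h\chi_{\Omc}$. Finally, $H(g+h)=Hg$ in $\Om$ because $g+h$ and $g$ agree quasieverywhere on $\Omc$, which is exactly the equivalence relation seen by the Sobolev formulation.

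The main obstacle I anticipate is the verification that $u_\eps$ and $\tilde u_\eps$ are genuinely $\LL$-superharmonic in $\Om$ when the auxiliary barriers $w,v$ are allowed to be infinite on capacity-zero subsets of $\Omc$: because $\LL$ is nonlocal, the values on $\Omc$ enter the superharmonicity definition through tail integrals, and one must ensure the constructed barriers have their tails in $L^{p-1}_{sp}(\Omc)$ so that the comparison tests underlying Definition~\ref{def:superharmonic} remain meaningful. A secondary but essential input is the existence of such barriers $w$ and $v$ with the prescribed $+\infty$ set and the Kellogg-type identification of $I$ as a $\csp$-null set; both are assumed to be proved elsewhere in the paper.
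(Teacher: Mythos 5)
Your approach has a fundamental gap that makes it unsalvageable as written for nonlinear $\LL$ (i.e.\ $p\neq 2$). You write that $u_\eps = Hg + \eps w$ ``is $\LL$-superharmonic in $\Om$ (sum of an $\LL$-harmonic and a superharmonic function).'' This superposition principle is a linear fact: when $p\neq 2$, the sum of a solution and a supersolution of $\LL u=0$ is in general neither, just as for the local $p$-Laplacian. The same issue affects $\tilde u_\eps$. This is not a minor gap that can be patched in the verification of Definition~\ref{def:superharmonic}; it is the reason the Perron method for nonlinear equations cannot simply mimic the classical $\eps w$-barrier argument, and it is precisely what the paper's proof of Theorem~\ref{thm-Hg=Pg} is designed to circumvent. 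Secondarily, you assume without proof the existence of a nonnegative $\LL$-superharmonic function with finite tail that is identically $+\infty$ on an arbitrary $\Csp$-null set; the paper does not establish such a result and uses no such barrier.

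The paper's actual strategy is entirely different. Working with nonnegative $g$ first, it builds functions $\phi_j=\sum_{i\ge j}\psi_i\in\Wsp(\Rn)$ with $\phi_j=\infty$ on the exceptional set $E'=I_\Om\cup E$ and $\|\phi_j\|_{\Wsp(\Rn)}<2^{1-j}$ (Lemma~\ref{lem-Wsp-Csp}), then feeds $f_j=Hf+\phi_j$ into the obstacle problem: the lsc-regularized solution $u_j$ of the $\K_{f_j,f_j}(\Om)$-obstacle problem is automatically a supersolution hence $\LL$-superharmonic (Theorems~\ref{thm-obst-KL-solv} and~\ref{thm:KKP17}), while the obstacle forces $u_j\to\infty$ near $E'$ and comparison gives $u_j\ge Hf$. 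The conclusion $\uP f\le Hf$ then follows from the convergence theorem for obstacle problems (Theorem~\ref{thm-conv-obst-prob}), not from a pointwise $\eps\downarrow 0$ limit. So the obstacle problem replaces, in one stroke, both the missing barrier construction and the failing superposition step. You have correctly identified the Kellogg property (Theorem~\ref{thm-kellogg}) and the inequality $\lP\le\uP$ (Theorem~\ref{thm-comp}) as inputs, and your overall ``sandwich'' shape of the argument matches the paper, but the mechanism producing the admissible upper-class functions is where your proof would fail.
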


\begin{thm} \label{thm-Perron-res-cont}
Let $g:\Omc\to\R$ be a bounded measurable function that is continuous 
at every $x\in\bdy\Om$.
Assume that $h:\Omc \to \eR$ is such that   
$h=0$ in $\Omc \setm E$, where the Sobolev capacity $\Csp(E)=0$. 
Then both $g$ and $g+h$ are resolutive and 
\[
     P(g+h) =  Pg + h\chi_{\Omc}.   
\]
Moreover, both $Pg$ and $P(g+h)$ are $\LL$-harmonic in $\Om$.
\end{thm}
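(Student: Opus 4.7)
The plan is to approximate $g$ uniformly on $\Omc$ by data to which Theorem~\ref{thm-main-Hg=Pg} applies, and then to pass to the limit using the comparison principle \eqref{eq-Pg1-Pg2}. Let $c=\lim_{x\to\infty}g(x)\in\R$. Since $\Om$ is bounded, any continuous (Tietze) extension $\tilde g\in C(\R^n)$ of $g$ with $\|\tilde g\|_\infty\le\|g\|_\infty$ automatically satisfies $\tilde g-c\in C_0(\R^n)$. Choose $\phi_k\in C_c^\infty(\R^n)$ with $\|\phi_k-(\tilde g-c)\|_{L^\infty(\R^n)}\le\eps_k\to 0$, and set $g_k:=\phi_k+c$. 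Then $g_k\in C(\R^n)\cap\VspOm$ (the compactly supported smooth piece $\phi_k$ lies in $\VspOm$, and the constant $c$ contributes no fractional energy on the bounded set $\Om$), and $g_k\to\tilde g$ uniformly on $\R^n$.

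Applying Theorem~\ref{thm-main-Hg=Pg} to the pair $(g_k,h)$, where $h$ is trivially extended by $0$ to $\Om$ (still vanishing off $E$), yields resolutivity of $g_k$ and $g_k+h$, the identity $P(g_k+h)=Pg_k+h\chi_{\Omc}$, and $\LL$-harmonicity of $Pg_k$ and $P(g_k+h)$ in $\Om$. Directly from Definition~\ref{def-Perron}, adding a real constant $t$ to the data translates Perron solutions by $t$: $u\in\UU_{f+t}$ iff $u-t\in\UU_f$, because constants are $\LL$-harmonic. Combined with \eqref{eq-Pg1-Pg2} and the pointwise bound $|g_k-g|\le\eps_k$ on $\Omc$, this yields
\[
    |\uP g-\uP g_k|\le\eps_k\quad\text{and}\quad|\lP g-\lP g_k|\le\eps_k\quad\text{on }\R^n.
\]
Resolutivity of $g_k$ and \eqref{eq-lP<=uP} then give $0\le\uP g-\lP g\le 2\eps_k\to 0$, so $g$ is resolutive and $Pg_k\to Pg$ uniformly on $\R^n$. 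The same argument with $g$ replaced by $g+h$ and $g_k$ by $g_k+h$ (the $\eps_k$-comparison being vacuous on $E$, where $g+h$ and $g_k+h$ coincide and equal $\pm\infty$) shows that $g+h$ is resolutive with $P(g_k+h)\to P(g+h)$ uniformly; passing to the limit in the displayed identity delivers $P(g+h)=Pg+h\chi_{\Omc}$.

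For the $\LL$-harmonicity in $\Om$, observe that boundedness of $g$ on $\Omc$ implies $g\in L^{p-1}_{sp}(\Omc)$ (since $\int_{|y|\ge1}|y|^{-n-sp}\,dy<\infty$), so Theorem~\ref{thm-harmonicity}\,\ref{thm-harmonicity-b} forces $Pg=\uP g$ to be $\LL$-harmonic in $\Om$ or identically $\pm\infty$ there; the constants $\pm\|g\|_\infty$ belong to $\UU_{\pm g}$ and rule out the latter. Since $P(g+h)=Pg$ in $\Om$, it inherits $\LL$-harmonicity as well. The main delicate point is that the translation argument and the uniform $\eps_k$-bounds must survive on the capacity-zero set $E$ where $h$ can be $\pm\infty$; however, on such points $g_k+h$ and $g+h$ take the same infinite value, so the relevant comparisons become vacuous and the argument proceeds unchanged.
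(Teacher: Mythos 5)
Your proof is correct and follows essentially the same route as the paper: a Tietze extension of $g$, uniform approximation by compactly supported smooth functions (the paper uses Lipschitz) in $C(\R^n)\cap\VspOm$ after normalizing the value at infinity, invocation of Theorem~\ref{thm-main-Hg=Pg} (the paper uses the slightly more general Theorem~\ref{thm-Hg=Pg}), and passage to the limit via \eqref{eq-Pg1-Pg2}, translation by constants, and \eqref{eq-lP<=uP}; the $\LL$-harmonicity is obtained from boundedness of $Pg$ together with Theorem~\ref{thm-harmonicity}\,\ref{thm-harmonicity-b}/Theorem~\ref{thm-Perron-meas}. One minor inaccuracy in a parenthetical: on $E$ the perturbation $h$ need not be $\pm\infty$, so $g_k+h$ and $g+h$ need not coincide there; but since $|g-g_k|\le\eps_k$ everywhere and $h$ contributes the same value to both, the two-sided bound $(g_k+h)-\eps_k\le g+h\le(g_k+h)+\eps_k$ still holds on all of $\Omc$ (with the usual conventions for $\pm\infty$), so the argument is unaffected.
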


Lindgren--Lindqvist~\cite{LL17} 
proved the resolutivity (but not invariance 
with $h \not\equiv 0$)
of bounded continuous Dirichlet data $g$ (with a limit at infinity)
for $\LL= (-\Delta_p)^s$  (without kernel but with bounded right-hand sides).
We only require continuity at all $x \in \bdy \Om$ (and boundedness in $\Omc$).

In the linear nonlocal case, Bliedtner--Hansen~\cite{BH86} 
(using a different type of Perron solutions)  obtained
more extensive resolutivity results corresponding
to Brelot's classical resolutivity result for harmonic functions,
 but they substantially used linearity 
(which is not at our disposal here).
For example, the invariance under negligible perturbations of the 
Dirichlet data is an easy 
consequence of the linearity.
In neither paper, there are any results connecting Sobolev and Perron solutions.

In order to prove Theorem~\ref{thm-main-Hg=Pg} we show the Kellogg property, which is of independent interest.

\begin{thm}\label{thm-kellogg}
\textup{(Kellogg property)}
Let $I_\Om\subset\bdy\Om$ be the set of Sobolev irregular boundary points for 
$\LL u=0$
  in $\Om$.
  Then the Sobolev capacity $\Csp(I_\Om)=0$.
\end{thm}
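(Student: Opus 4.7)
The plan is to combine (i) a capacitary argument showing that for each specific Sobolev datum the set of boundary points where $Hg$ fails to attain the right limit has zero $\Csp$-capacity, with (ii) a reduction to a countable family of test data so that $\Csp$-subadditivity yields the result.

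For step (i), fix $g\in C(\R^n)\cap\Vsp(\Om)$ and set
\[
  I_g=\{x_0\in\bdy\Om : \lim_{\Om\ni y\to x_0} Hg(y)\ne g(x_0)\}.
\]
The key observation is that $Hg-g\in\Vspo(\Om)$: indeed $Hg=g$ on $\Omc$ by the definition of $H$, so $Hg-g$ vanishes outside $\Om$ and belongs to the appropriate fractional Sobolev space with zero exterior values. By the already established interior regularity, $Hg$ is $\LL$-harmonic and in particular continuous in $\Om$, and $g$ is continuous on $\R^n$. Hence $Hg-g$ is continuous in $\Om$ and $0$ outside $\Om$, which means that inside $\Om$ it already equals its quasicontinuous representative. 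I would then invoke the standard Sobolev-capacity fact that a quasicontinuous function in $\Vspo(\Om)$ has limit zero at $\Csp$-quasievery point of $\bdy\Om$; this is proved by a weak-type capacity estimate on the level sets $\{|Hg-g|>\eps\}$ and a Borel--Cantelli style argument. This gives $\Csp(I_g)=0$.

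For step (ii), I would produce a countable family $\mathcal F_0\subset C(\R^n)\cap\Vsp(\Om)$ — for instance truncated tent functions $g_{z,r}(x)=(1-|x-z|/r)_+$ with $z\in\mathbf Q^n$ and $r\in\mathbf Q\cap(0,\infty)$, intersected with balls of rational radii centered at rational points — and argue that if $x_0\in\bdy\Om$ is Sobolev regular for every member of $\mathcal F_0$, then $x_0$ is regular for every continuous exterior datum. This is where the nonlinearity prevents a direct density argument, so the natural route is via barriers: regularity at $x_0$ is a local property, and the existence of a barrier at $x_0$ can be characterized through the behaviour of $Hg_{z,r}$ for an appropriate sequence $(z,r)$ approaching $x_0$. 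The comparison principle (Theorem~\ref{thm-comp}) then transfers regularity from $\mathcal F_0$ to arbitrary continuous data, giving $I_\Om\subset\bigcup_{g\in\mathcal F_0}I_g$ and hence
\[
  \Csp(I_\Om)\le\sum_{g\in\mathcal F_0}\Csp(I_g)=0.
\]

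The main obstacle is precisely step (ii): in the local linear case one uses linearity and density, but here one must show that a countable geometric family of test data is rich enough to witness every possible Sobolev irregularity. I expect the proof to exploit the equivalence between Sobolev regularity and the existence of a barrier, together with the comparison principle, so that checking a countable family of localized tent functions at rational scales suffices. Step (i), by contrast, is the cleaner half: it reduces to the standard quasicontinuity/limit property of $\Vspo$-functions, for which the only technical point is verifying that the fractional space $\Vspo(\Om)$ used in the paper admits quasicontinuous representatives with the expected boundary behaviour.
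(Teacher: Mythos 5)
Your route is genuinely different from the one in the paper, which transfers the whole problem to the local setting: the Wiener criterion (Theorem~\ref{thm-Wiener}) shows that $x_0\in\bdy\Om$ is Sobolev irregular for $\LL$ if and only if the integral in~\eqref{eq-Wiener} converges, Lemma~\ref{lem-cap-extended} translates that into the corresponding Wiener integral for the weighted \p-capacity $\cpa$ in $\R^{n+1}$, and then the known Kellogg property for the weighted \p-Laplace equation in $\R^{n+1}$ (HKM, Theorem~9.11) finishes the job via Proposition~\ref{prop-cp-Cp}. Your proposal instead tries a direct argument: establish $\Csp(I_g)=0$ for each fixed $g$ and then exhaust $I_\Om$ by a countable family. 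That framework is sound in principle and is how the Kellogg property was originally approached in the local linear case, so there is nothing wrong with the \emph{shape} of the argument.

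The gap is in step~(i), which you describe as the clean half but which is in fact the crux. Quasicontinuity of the representative of $Hg-g\in\Vspo(\Om)$ gives, for every $\eps>0$, an open set $G$ with $\Csp(G)<\eps$ outside of which $Hg-g$ is continuous, hence $(Hg-g)(x_0)=0$ and $\lim_{(\R^n\setm G)\ni y\to x_0}(Hg-g)(y)=0$ for $x_0\in\bdy\Om\setm G$. But this is a limit restricted to the complement of the exceptional set $G$, and $G$ may well accumulate at $x_0$ from inside $\Om$; quasicontinuity by itself does not give the full topological limit $\lim_{\Om\ni y\to x_0}(Hg-g)(y)=0$. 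Upgrading the ``fine'' or ``quasi'' limit to an ordinary limit is exactly where the harmonicity of $Hg$ in $\Om$ must be used quantitatively (via oscillation decay, potential estimates, or the Wiener criterion), and that upgrade \emph{is} the Kellogg property. So invoking ``the standard Sobolev-capacity fact that a quasicontinuous function in $\Vspo(\Om)$ has limit zero at $\Csp$-quasievery boundary point'' begs the question: no such general fact is available for the ordinary limit, and the Chebyshev-plus-Borel--Cantelli argument you outline only gives a q.e.\ pointwise statement about values, not boundary limits from within $\Om$. To make this route rigorous you would need to insert the kind of decay/capacity estimate that, for example, underlies the divergence of the Wiener integral at q.e.\ boundary point --- but at that stage you are essentially redoing the Wiener criterion, which is what the paper's transference argument uses directly.

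On step~(ii): your instinct to replace $d_{x_0}$ (which depends on $x_0$) by a countable family is right, but be careful not to use the barrier characterization (Theorem~\ref{thm-main-reg-Perron}\ref{b-barrier}), since the proof of that equivalence already uses the Kellogg property through Theorem~\ref{thm-Hg=Pg}. The safe ingredients are the potential characterizations~\ref{a-potential}/\ref{a-potential-seq} in Theorem~\ref{thm-Sobolev-reg}, together with Corollary~\ref{cor-local-subset}, which are proved before the Kellogg property; taking balls with rational centres and radii makes that countable. Even with that repair, step~(i) still needs the quantitative input described above before the argument closes.
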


Together with the invariance in Theorem~\ref{thm-Perron-res-cont},
the Kellogg property leads to the following existence and
uniqueness result, which substantially extends
Lemma~19 in Korvenp\"a\"a--Kuusi--Palatucci~\cite{KKP17}.
In Proposition~\ref{prop-Wsp-cpt} we give another application
of the Kellogg property.

\begin{thm} \label{BBS2-thm-intro}
Let $g:\Omc\to\R$ be a bounded measurable function that is
 continuous at every $x\in\bdy\Om$.
Then there exists a unique bounded $\LL$-harmonic function $u$ in $\Om$
such that $u \equiv g$ in $\Om^c$   
and 
\begin{equation} \label{eq-BBS2-thm}
         \lim_{\Om \ni x \to x_0} u(x) = g(x_0)
         \quad \text{for quasievery } x_0 \in \bdy \Om,
\end{equation}
i.e.\ outside a set $E$ with $\Csp(E)=0$.
Moreover $u=P g$.
\end{thm}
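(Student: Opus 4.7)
The natural candidate is $u:=Pg$. For the \emph{existence} part I would invoke Theorem~\ref{thm-Perron-res-cont} (with $h\equiv0$) to obtain that $g$ is resolutive and $Pg$ is $\LL$-harmonic in $\Om$. Since $g$ is bounded, say $|g|\le M$ on $\Omc$, the constant $M$ is $\LL$-superharmonic and dominates $g$, so $M\in\UU_g$ and in particular $\UU_g\ne\emptyset$. By~\eqref{eq-uP=g} this forces $Pg=\uP g=g$ on $\Omc$, and comparing $Pg$ with the constants $\pm M$ as competitors in $\UU_g$ and $\UU_{-g}$ gives $|Pg|\le M$, so $u$ is bounded. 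The boundary convergence follows by combining the Kellogg property (Theorem~\ref{thm-kellogg}), which identifies the set $I_\Om$ of Sobolev irregular points on $\bdy\Om$ as a $\Csp$-null set, with the advertised equivalence of Sobolev and Perron regularity from Theorem~\ref{thm-main-reg}: at every Perron regular $x_0\in\bdy\Om\setm I_\Om$ the definition of regularity together with continuity of $g$ on $\Omc$ yields $\lim_{\Om\ni x\to x_0}Pg(x)=g(x_0)$.

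For \emph{uniqueness}, suppose $v$ is another bounded $\LL$-harmonic function satisfying the stated conditions, and let $E\subset\bdy\Om$ be the $\Csp$-null exceptional set where $v$ fails to attain $g$ as a limit. The plan is to deduce $v=Pg$ from the comparison principle (Theorem~\ref{thm-comp}). Fix an arbitrary $w\in\UU_g$: in $\Omc$ we have $v=g\le w$, and at every $x_0\in\bdy\Om\setm E$
\[
\limsup_{x\to x_0}v(x)=g(x_0)\le\liminf_{x\to x_0}w(x).
\]
If these conditions could be verified at \emph{every} boundary point, Theorem~\ref{thm-comp} applied to the $\LL$-subharmonic $v$ and the $\LL$-superharmonic $w$ would give $v\le w$ in $\Om$, and taking the infimum over $w\in\UU_g$ would yield $v\le\uP g=Pg$. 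The symmetric argument with $-v$ and $\UU_{-g}$ would then give $v\ge\lP g=Pg$, so $v=Pg$.

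The main obstacle is to absorb the exceptional set $E$ into this comparison. The plan is, for each $\eps>0$, to use the vanishing Sobolev capacity $\Csp(E)=0$ to produce a nonnegative $\LL$-superharmonic barrier $\phi_\eps$ in $\Om$ with $\liminf_{x\to x_0}\phi_\eps(x)=\infty$ for every $x_0\in E$ and with $\phi_\eps$ arbitrarily small off a neighborhood of $E$; such potentials are constructed from the capacitary theory of $\Csp$ developed earlier in the paper. Because $\LL$ is nonlinear and sums of $\LL$-superharmonic functions need not remain $\LL$-superharmonic, I would not simply replace $w$ by $w+\eps\phi_\eps$, but instead form an $\LL$-superharmonic majorant (for instance an obstacle solution with obstacle $w+\eps\phi_\eps$) that still lies in $\UU_g$ and inherits $\liminf=\infty$ on $E$. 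Applying the comparison principle to $v$ and this majorant, then letting $\eps\to0$ and using interior continuity of $v$ and $Pg$ to propagate the resulting pointwise inequality from $\Om$ minus a $\Csp$-null set to all of $\Om$, would complete the argument. This nonlinear absorption of $E$ is where I expect the main technical difficulty; the rest reduces to Theorems~\ref{thm-Perron-res-cont},~\ref{thm-kellogg} and~\ref{thm-comp}.
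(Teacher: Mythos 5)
Your existence argument matches the paper's: you correctly combine Theorem~\ref{thm-Perron-res-cont} (resolutivity, $\LL$-harmonicity), the Kellogg property (Theorem~\ref{thm-kellogg}) and Theorem~\ref{thm-main-reg-Perron} to get the boundary convergence \eqref{eq-BBS2-thm}, and the boundedness of $Pg$ follows just as you say from comparison with the constants $\pm M$.

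For uniqueness, however, your plan has a genuine gap exactly where you flag it: absorbing the $\Csp$-null set $E$ into the comparison with arbitrary competitors $w\in\UU_g$. Your sketch asks for a nonnegative $\LL$-superharmonic $\phi_\eps$ blowing up on $E$ and, since $w+\eps\phi_\eps$ need not be $\LL$-superharmonic, proposes an obstacle-solution majorant. But $w$ is only $\LL$-superharmonic, not assumed to lie in $\Vsp(\Om)$, so the obstacle problem with obstacle $w+\eps\phi_\eps$ is not obviously well-posed; moreover one must verify that the resulting majorant actually lies in $\UU_g$ (the correct exterior data and the boundary liminf condition), and the limit $\eps\to0$ requires uniform control you have not indicated. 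None of this is routine, and the paper does not carry it out.

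The paper's uniqueness argument sidesteps the issue entirely and is worth internalizing: after normalizing so that $0\le u\le1$ and $0\le g\le1$, one observes that $u$ itself belongs to $\UU_{g-\chi_E}$. The point is that at exceptional $x_0\in E$ the required inequality $\liminf_{\Om\ni y\to x_0}u(y)\ge(g-\chi_E)(x_0)$ is automatic because $(g-\chi_E)(x_0)\le0\le u$; at $x_0\notin E$ it follows from \eqref{eq-BBS2-thm}; and in $\Omc$ one has $u=g\ge g-\chi_E$. Hence $u\ge\uP(g-\chi_E)$. By the invariance part of Theorem~\ref{thm-Perron-res-cont} (with $h=-\chi_E$), $\uP(g-\chi_E)=Pg$ in $\Om$. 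Symmetrically $u\in\LL_{g+\chi_E}$ gives $u\le\lP(g+\chi_E)=Pg$, so $u=Pg$. Thus, rather than modifying the competitors $w$ to absorb $E$ (which runs into nonlinearity), the paper modifies the \emph{boundary data} by $\pm\chi_E$, a perturbation for which the invariance result was already established, and notes that after normalization the candidate solution itself is admissible. Your comparison-principle route could in principle be pushed through, but it would essentially amount to reproving a special case of Theorem~\ref{thm-Perron-res-cont}; the paper's route reuses that theorem directly.
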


Finally we are ready to state our last main result, concerning boundary regularity. 
This follows from Theorems~\ref{thm-Wiener}, \ref{thm-Sobolev-reg}
and~\ref{thm-main-reg-Perron}.

\begin{thm}\label{thm-main-reg}
Let $x_0 \in \partial\Om$. Then the following are equivalent\/\textup{:}
\begin{enumerate}
\item \label{i-Sobolev}
$x_0$ is Sobolev regular, i.e.
\begin{equation*}
\lim_{\Om \ni x \to x_0} Hg(x) = g(x_0)
\quad \text{for every } g \in \VspOm \cap C(\R^n).
\end{equation*}
\item \label{i-Perron}
$x_0$ is Perron regular, i.e.
\begin{equation*}
\lim_{\Om \ni x \to x_0} Pg(x) = g(x_0)
\end{equation*}
for every bounded $g \in C(\Omc)$. 
\item \label{i-d}
\begin{equation*}
\lim_{\Om \ni x \to x_0} Hd_{x_0}(x)=0, 
\quad \text{where }   d_{x_0}(x):=\min\{1,|x-x_0|\}.
\end{equation*}
\item \label{i-contx0}
  \[
    \lim_{\Om \ni x \to x_0} \uP g(x)=g(x_0)
  \]  
for every bounded $ g:\Omc\to \R$ that
is continuous at $x_0$.
\item \label{i-Wiener}
\textup{(}The Wiener criterion\/\textup{)}
\begin{equation*}
\int_0^1 \biggl( \frac{\csp(\itoverline{B(x_0, \rho)}\setm \Om,B(x_0,2\rho))}{\rho^{n-sp}} \biggr)^{1/(p-1)} \, \frac{d\rho}{\rho} = \infty,
\end{equation*}
where $\csp$ is the condenser capacity.
\item \label{i-barrier}
There is a barrier at $x_0$.
\end{enumerate}
\end{thm}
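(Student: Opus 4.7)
The plan is to reduce the six-way equivalence to three already-established pieces plus a short linking step. Theorems~\ref{thm-Wiener} and~\ref{thm-Sobolev-reg} will yield the ``Sobolev block'' \ref{i-Sobolev} $\Leftrightarrow$ \ref{i-d} $\Leftrightarrow$ \ref{i-Wiener} $\Leftrightarrow$ \ref{i-barrier}; Theorem~\ref{thm-main-reg-Perron} will yield the ``Perron block'' \ref{i-barrier} $\Leftrightarrow$ \ref{i-Perron} $\Leftrightarrow$ \ref{i-contx0}; and Theorems~\ref{thm-main-Hg=Pg}--\ref{thm-Perron-res-cont} will identify $Hg$ with $Pg$ where needed, gluing the two blocks. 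Since \ref{i-barrier} lies in both blocks, the cycle closes automatically.

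For the Sobolev block, I would first note that the truncated distance $d_{x_0}(x)=\min\{1,|x-x_0|\}$ is bounded and Lipschitz, hence lies in $\VspOm \cap C(\R^n)$ (recall that $\Om$ is bounded), so \ref{i-Sobolev}~$\Rightarrow$~\ref{i-d} is immediate by specialization. The converse, together with the equivalences to the Wiener criterion \ref{i-Wiener} and the existence of a barrier \ref{i-barrier}, would all be invoked from Theorem~\ref{thm-Wiener}, with \ref{i-barrier}~$\Rightarrow$~\ref{i-Sobolev} supplied by the barrier-implies-regularity argument in Theorem~\ref{thm-Sobolev-reg}. Informally, \ref{i-d}~$\Rightarrow$~\ref{i-Wiener} compares $Hd_{x_0}$ to condenser capacities of $\clB(x_0,\rho)\setm\Om$, while \ref{i-Wiener}~$\Rightarrow$~\ref{i-barrier} builds a genuine $\LL$-superharmonic barrier as a suitable superposition of such capacity potentials.

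For the Perron block I would appeal directly to Theorem~\ref{thm-main-reg-Perron}. Only \ref{i-contx0}~$\Rightarrow$~\ref{i-Perron} deserves a separate remark: any bounded $g\in C(\Omc)$ whose limit at infinity exists is in particular continuous at every $x_0\in\bdy\Om$, and by Theorem~\ref{thm-Perron-res-cont} it is resolutive, so $\uP g=Pg$ and the limit in \ref{i-contx0} reduces to the one in \ref{i-Perron}. The bridge between the two blocks is then provided by Theorem~\ref{thm-main-Hg=Pg}: for every $g\in C(\R^n)\cap\VspOm$ we have $Hg=Pg$ in $\Om$, so the boundary behaviour quantified in \ref{i-Sobolev} and in (the relevant restriction of) \ref{i-Perron} refers to one and the same function; combined with \ref{i-barrier} appearing in both blocks, this closes the loop.

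The technically hardest implication, which I expect to be the main obstacle, is \ref{i-Wiener}~$\Rightarrow$~\ref{i-barrier}: constructing an $\LL$-superharmonic barrier from the divergence of a capacity integral. In the nonlocal setting this requires simultaneous control of local energies and of the long-range tail in $L^{p-1}_{sp}$, a feature with no counterpart in the local \p-Laplace theory. All that heavy lifting is absorbed in Theorem~\ref{thm-Wiener}, so in the present proof the work reduces to verifying that the hypotheses of the cited theorems are met and threading their conclusions into the cycle described above.
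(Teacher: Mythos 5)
Your overall strategy — assembling the theorem from Theorems~\ref{thm-Wiener}, \ref{thm-Sobolev-reg} and~\ref{thm-main-reg-Perron} — is exactly the paper's approach, and the two blocks you draw do cover all six conditions. However, you have misattributed where the barrier characterization lives, which makes parts of your outline factually wrong even though the conclusion can be reached with the same ingredients. Theorem~\ref{thm-Sobolev-reg} contains no barrier characterization (its conditions are Sobolev regularity, the bounded/continuous-at-$x_0$ variants, the $Hd_{x_0}$ criterion, and two potential-theoretic conditions), and Theorem~\ref{thm-Wiener} is purely the Sobolev $\Leftrightarrow$ Wiener equivalence imported from Kim--Lee--Lee; it does \emph{not} construct an $\LL$-superharmonic barrier from the divergence of the capacity integral. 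So the claims ``\ref{i-barrier}~$\Rightarrow$~\ref{i-Sobolev} supplied by the barrier-implies-regularity argument in Theorem~\ref{thm-Sobolev-reg}'' and ``\ref{i-Wiener}~$\Rightarrow$~\ref{i-barrier}\dots is absorbed in Theorem~\ref{thm-Wiener}'' are both incorrect: those results simply do not address barriers.

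The barrier equivalence is condition \ref{b-barrier} in Theorem~\ref{thm-main-reg-Perron}, proved there via the chain \ref{b-reg}\imp\ref{b-obst-d}\imp\ref{b-barrier}\imp\ref{b-d}, where the actual barrier is produced as a solution of the $\K_{d_{x_0},d_{x_0}}(\Om)$-obstacle problem — a different mechanism than the capacity-potential superposition you had in mind. Moreover, Theorem~\ref{thm-main-reg-Perron} also lists Sobolev regularity itself as condition \ref{b-reg}, so it already links \ref{i-Sobolev} directly to \ref{i-Perron}, \ref{i-contx0} and \ref{i-barrier}; there is no need for the extra ``bridge'' via Theorems~\ref{thm-main-Hg=Pg}--\ref{thm-Perron-res-cont}. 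The fix is small: drop \ref{i-barrier} from your Sobolev block, note that Theorem~\ref{thm-main-reg-Perron} provides \ref{i-Sobolev}$\Leftrightarrow$\ref{i-Perron}$\Leftrightarrow$\ref{i-contx0}$\Leftrightarrow$\ref{i-barrier} all in one stroke, and let \ref{i-Sobolev} serve as the common hub with Theorems~\ref{thm-Wiener} and~\ref{thm-Sobolev-reg}. With that correction the argument coincides with the paper's.
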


Some remarks are in order. The coincidence of Sobolev and Perron regularity (the equivalence \ref{i-Sobolev}\eqv\ref{i-Perron}) is new even for the simplest case when the operator is given by $(-\Delta)^s$. 
Due to this equivalence one  does not have to distinguish between
Sobolev and Perron regularity. 
Condition~\ref{i-d} provides a criterion for the regularity that can be easily checked. The equivalence \ref{i-Perron}\eqv\ref{i-contx0} shows that one can characterize regular boundary points in terms of upper Perron solutions for a very general class of exterior data $g$, including nonmeasurable $g$. 
Recall that $\uP g$ is not $\LL$-harmonic in $\Om$ when $g$ is nonmeasurable,
but in view of \ref{i-contx0} it is of interest to also include
nonmeasurable boundary data $g$. 

The important Wiener criterion (i.e.\ the equivalence \ref{i-Sobolev}\eqv\ref{i-Wiener})
was recently obtained by 
Kim--Lee--Lee~\cite{KLL23} for general $\LL$ as here, and 
independently by Bj\"orn~\cite{JBWien} 
for $(-\Delta)^s$ 
using the Caffarelli--Silvestre extension~\cite{CafSil}.
It serves as a starting point for our study of boundary regularity.
The barrier characterization \ref{i-barrier}
 for Perron regularity was shown by Lindgren--Lindqvist~\cite{LL17}
 for $\LL= (-\Delta_p)^s$  (without kernel).
See Theorems~\ref{thm-Sobolev-reg} and~\ref{thm-main-reg-Perron} for further characterizations of 
regular boundary points.
In particular, we show in Theorem~\ref{thm-main-reg-Perron} that boundary regularity
for obstacle problems is also equivalent to Perron and Sobolev regularity.

Next we give some historical remarks.
For classical harmonic functions, the Perron method (or PWB method) was developed 
by Perron~\cite{Per23}, Remak~\cite{remak}, Wiener~\cite{Wie24a,Wie24b} and Brelot~\cite{Bre39}. 
It plays a fundamental role in potential theory
and has later been extended to wide classes of problems.

As far as we know, Aronsson~\cite[Section~4]{aronsson67} was
the first who used the Perron method in connection with a nonlinear
equation, namely for the $\infty$-Laplacian (also called
Aronsson's equation).
In connection with \p-harmonic functions (for $1<p<\infty$)
it was first used by
Granlund--Lindqvist--Martio~\cite{GLM86},
see also Kilpel\"ainen~\cite{Kilp89} and 
Heinonen--Kilpel\"ainen--Martio~\cite{HeKiMa}.
In the nonlinear local setting, invariance results 
such as in Theorems~\ref{thm-main-Hg=Pg} and~\ref{thm-Perron-res-cont}
(with $h \not\equiv 0$)
and a uniqueness result similar to
Theorem~\ref{BBS2-thm-intro} were 
obtained
by Bj\"orn--Bj\"orn--Shanmugalingam~\cite{BBS2},
see also  Bj\"orn--Bj\"orn--Mwasa~\cite{BBMwasa}.

Boundary regularity has
been studied
in depth for nonlinear local equations such as the 
\p-Laplace equation $-\Delta_pu=0$.
The classical Wiener criterion~\cite{Wie24b} for harmonic functions
was extended to the \p-Laplace equation
by Maz{\cprime}ya~\cite{mazya70} (sufficiency) and
Lindqvist--Martio~\cite{LM85} and Kilpel\"ainen--Mal\'y~\cite{KM94} (necessity).
The Kellogg property was shown by Kilpel\"ainen~\cite{Kilp89}.
In an equivalent form,
it 
also follows from earlier
results by Hedberg~\cite{Hedberg72} (for $p>2-1/n$)
and Hedberg--Wolff~\cite{HedbergWolff} (for $p \le 2-1/n$), together with 
Maz{\cprime}ya's sufficiency part of the Wiener criterion.
Barrier characterizations were obtained by 
Granlund--Lindqvist--Martio~\cite{GLM86}, Lehtola~\cite{Leh86} and Kilpel\"ainen~\cite{Kilp89}.
We refer the reader to Heinonen--Kilpel\"ainen--Martio~\cite{HeKiMa} 
and Bj\"orn--Bj\"orn~\cite{BBbook}
for further characterizations, historical remarks
and other topics in nonlinear potential theory associated with \p-harmonic functions.

The paper is organized as follows. In Section~\ref{sec-sobolev} we recall some function spaces and study basic properties of (weak) solutions of $\LL u=0$ in $\Om$. 
We also obtain some basic results for the fractional Sobolev space $\Vspo(\Om)$
which seem to be less trivial than one might expect.

Some results about the Dirichlet and obstacle problems associated with $\LL$ 
are presented in Section~\ref{sec-D-obstacle}. Our first characterizations of 
Sobolev regularity are given in Section~\ref{sec-Sob-reg}. 
The equivalence \ref{i-Sobolev}\eqv\ref{i-d} in Theorem~\ref{thm-main-reg} is proved here, 
as well as other
characterizations 
including some for
potentials.

Two capacities, namely the condenser capacity $\csp$ and 
the Sobolev capacity $\Csp$, 
play a fundamental role
in describing boundary regularity.
Section~\ref{sec-capacity} is devoted to the investigation of the close relationship
 between these capacities. With these tools at hand we prove 
the Kellogg property (Theorem~\ref{thm-kellogg}) in Section~\ref{sec-kellogg}. 

In Section~\ref{sec-obstacle} we obtain a convergence result for solutions of 
obstacle problems,
which plays a fundamental role when deducing 
Theorem~\ref{thm-main-Hg=Pg} in Section~\ref{sec-perron}. 
This convergence result (Theorem~\ref{thm-conv-obst-prob}) 
may be of independent interest.
Before turning to Perron solutions, 
we 
collect some results on 
$\LL$-superharmonic functions in Section~\ref{sec-superharmonic}.

Finally, in Section~\ref{sec-perron} we turn to Perron solutions.
Theorems~\ref{thm-harmonicity}--\ref{thm-Perron-res-cont} 
and more detailed results are proved in this section.
The full characterizations in Theorem~\ref{thm-main-reg} are deduced 
in Section~\ref{sec-perron-reg}. 
Here more characterizations are provided, including
some for obstacle problems.
Also Theorem~\ref{BBS2-thm-intro} is obtained in this section.

\begin{ack}
AB and JB were supported by the Swedish Research Council,
  grants 2018-04106, 2020-04011 and 2022-04048.
MK was supported by the National Research Foundation of Korea (NRF) 
grant funded by the Korean government (MSIT) (RS-2023-00252297).

The discussions leading to this paper started 
when all three authors
visited Institut Mittag-Leffler in the autumn of 2022 during the programme
\emph{Geometric Aspects of Nonlinear Partial Differential Equations}.
More progress was made during two visits of MK to  Link\"oping University 
and one, supported by SVeFUM, by AB and JB to Seoul in 2023.
Part of this research was conducted when all three authors 
visited Okinawa Institute of Science and Technology (OIST)
in 2024, AB and JB through the 
Theoretical Sciences Visiting Program (TSVP),
and when AB and JB visited Hanyang University, Seoul, in 2024,
supported by the NRF grant mentioned above.
We thank all these institutions  for their hospitality
and support.
\end{ack}

\section{The nonlocal equation and Sobolev spaces}\label{sec-sobolev}

\emph{Throughout the paper, we assume that 
$1<p<\infty$, $0<s<1$, $n \ge 1$, that $k$
 is a symmetric measurable kernel satisfying \eqref{eq-comp-(x,y)},
and that $\Om \subset \Rn$
is a nonempty  open set.
From  Section~\ref{sec-D-obstacle} onwards we also
always assume that $\Om$ is bounded.}

\medskip

In order to consider weak solutions of the equation
\begin{equation}\label{eq-Lu=0}
\LL u=0 \quad\text{in}~\Om,
\end{equation}
we need to define some function spaces.

For a measurable function $u: \Om \to \eR:=[-\infty,\infty]$ (which is
finite a.e.) we define the fractional seminorm  
\begin{equation*}
  [u]_{\Wsp(\Om)}=
\biggl(  \int_{\Omega}\int_{\Omega} \frac{|u(x)-u(y)|^p}{|x-y|^{n+s p}} 
      \, dy\, dx\biggr)^{1/p}. 
\end{equation*}
The fractional Sobolev space $\Wsp(\Omega)$ consists of the functions $u$ such that
the norm  
\begin{equation*}
\|u\|_{\Wsp(\Om)}^p:=\|u\|_{L^p(\Om)}^p+[u]_{\Wsp(\Om)}^p < \infty.
\end{equation*}
The above spaces and (semi)norms go under various names, 
such as fractional Sobolev, 
Gagliardo--Nirenberg, Sobolev--Slobodetski\u{\i} and Besov.

By $W^{s, p}_{\mathrm{loc}}(\Omega)$ we denote the space of functions that belong to $W^{s, p}(G)$ 
for every open $G \Subset \Omega$. 
As usual, by $E \Subset \Om$ we mean that $\itoverline{E}$
is a compact subset of $\Om$.
We refer the reader to Di Nezza--Palatucci--Valdinoci~\cite{DNPV12} for properties of these spaces.

As \eqref{eq-Lu=0} is a nonlocal equation, we also need a function space that captures integrability of
functions in the whole of $\Rn$.
The \emph{tail space} $L^{p-1}_{sp}(\R^n)$ is given by
\begin{equation}\label{eq-tail}
L^{p-1}_{sp}(\R^n) = \biggl\lbrace u \text{ measurable}: 
  \int_{\R^n} \frac{|u(y)|^{p-1}}{(1+|y|)^{n+sp}} \,dy < \infty \biggr\rbrace,
\end{equation}
see Kassmann~\cite{Kass11}  (for $p=2$) and Di Castro--Kuusi--Palatucci~\cite{DCKP16}.
For measurable functions $u, v: \R^n \to \eR$ 
we define the quantity
\begin{equation*}
\mathcal{E}(u,v)=\int_{\R^n} \int_{\R^n} |u(x)-u(y)|^{p-2} (u(x)-u(y))(v(x)-v(y)) k(x, y) \,dy\,dx,
\end{equation*}
provided that it is finite. Note that $\mathcal{E}(u, v)$ is well defined for 
$u \in \Wsp_{\mathrm{loc}}(\Om) \cap L^{p-1}_{sp}(\R^n)$ and $v \in C_c^\infty(\Om)$,
and that it is linear in the second argument.
Here and later, $C_c^\infty(\Om)$ denotes
the space of $C^\infty$ functions with compact support in $\Om$.
Similarly, $\Lipc(\Om)$ denotes the space of Lipschitz 
functions with compact support in $\Om$.
The \emph{support} of a function $u$ is 
$\supp u =\overline{\{x:u(x)\ne0\}}$.

We use the following definition of solutions and supersolutions of \eqref{eq-Lu=0}, 
which is now standard, see \cite{KLL23,KKL19,KKP16,KKP17,Pal18}.

\begin{deff}   \label{def-supersol}
Let $u \in W^{s, p}_{\mathrm{loc}}(\Om) \cap L^{p-1}_{sp}(\R^n)$.
Then $u$ is a 
(weak) \emph{solution} (resp.\ \emph{supersolution}) of $\LL u=0$ in $\Om$ if 
\begin{equation*}
\mathcal{E}(u, \varphi)  \ge 0 
\end{equation*}
for all  $\varphi \in C_c^{\infty}(\Omega)$
(resp.\ for all $0 \le \varphi \in C_c^{\infty}(\Omega)$).
If $u \in C(\Om)$ is a solution in $\Om$, then $u$ is \emph{$\LL$-harmonic} in $\Om$.
\end{deff}

For simplicity we will just say that $u$ is a (super)solution in $\Om$, but we always
mean with respect to $\LL u=0$ in the weak sense of Definition~\ref{def-supersol}.
If $u$ is a solution, then there is an $\LL$-harmonic function
$v$ such that $v=u$ a.e.,
see e.g.\ Di Castro--Kuusi--Palatucci~\cite[Theorem~1.4]{DCKP16}.
Their definition of solution is slightly different from ours, 
but the same proof shows that 
every solution in our sense has a representative which is
locally H\"older continuous in $\Omega$.
The  supersolutions defined in  Korvenp\"a\"a--Kuusi--Palatucci~\cite{KKP17}
are only required to satisfy 
$u \in W^{s, p}_{\mathrm{loc}}(\Om) $ and 
$u_\limminus \in L^{p-1}_{sp}(\R^n)$,
but  \cite[Lemma~1]{KKP17} shows that their
definition is equivalent to the definition above.
As usual, we let $u_\limplus=\max\{u,0\}$ and $u_\limminus=\max\{-u,0\}$.

It is often
convenient to deal with a larger class of test functions than $C_c^\infty(\Om)$.
For this purpose, we will use the spaces 
\begin{equation}     \label{eq-def-Vsp}
\Vsp(\Omega) := \biggl\lbrace u: \R^n \to \eR : 
u|_{\Omega} \in L^p(\Omega) \text{ and }  
\frac{|u(x)-u(y)|}{|x-y|^{n/p+s}} \in L^p(\Omega \times \R^n) \biggr\rbrace,
\end{equation}
equipped with the norm
\begin{align*}
\|u\|_{V^{s, p}(\Omega)} 
&:= \bigl( \|u\|_{L^p(\Omega)}^p + [u]_{V^{s, p}(\Omega)}^p \bigr)^{1/p} \\
&:= \biggl( \int_{\Omega} |u(x)|^p \,dx + \int_{\Omega} \int_{\R^n} \frac{|u(x)-u(y)|^p}{|x-y|^{n+sp}} \,dy \,dx \biggr)^{1/p},
\end{align*}
and
\begin{equation*}
V^{s, p}_0(\Omega) := \overline{C_c^{\infty}(\Omega)}^{V^{s, p}(\Omega)}.
\end{equation*}

Note that $\Vsp(\R^n)=\Wsp(\R^n)$ and that 
\[
\Lipc(\Om) \subset \Vspo(\Om) \subset
\Vsp(\Om) \subset
\Wsp(\Om) \cap L^{p-1}_{sp}(\R^n).
\]

The space $V^{s,2}(\Om)$ was (with $p=2$) introduced by
Servadei--Valdinoci~\cite{SV14} and independently by
Felsinger--Kassmann--Voigt~\cite{FKV15}.
The space $\VspoOm$ was also used by
Fiscella--Servadei--Valdinoci~\cite{FSV15},
Korvenp\"a\"a--Kuusi--Palatucci~\cite{KKP16,KKP17},
Lindgren--Lindqvist~\cite{LL17},
Kim--Lee--Lee~\cite{KLL23,KLL} and Kim--Lee~\cite{KL}.
See \cite[Remark~2.2]{KLL23} for further remarks 
on the spaces $\Vsp_0(\Om)$ and $\Vsp(\Om)$,
which were denoted by $\Wsp_0(\Om)$ and $\Vsp(\Om|\Rn)$
in~\cite{KLL23}.

In this paper, there are many places where the functions need to be 
defined pointwise everywhere in a given set, and not just a.e. 
For convenience, we will therefore assume that all functions are defined
pointwise everywhere.
When saying that $u \in \VspoOm$ we will always assume that $u \equiv 0$ 
outside $\Om$. 
To handle complications with $\infty - \infty$ 
when letting $w=u-v$ 
we will say that $w(x)=0$ whenever 
$u(x)=v(x)$ even when
$u(x)= v(x)=\pm \infty$.

\begin{prop} \label{prop-soln-Wsp}
Let $u \in W^{s, p}_{\mathrm{loc}}(\Om) \cap L^{p-1}_{sp}(\R^n)$.
Then $u$ is a  solution\/ \textup(resp.\ supersolution\/\textup) in $\Om$ if 
and only if
\begin{equation*}
\mathcal{E}(u, \varphi)  \ge 0 
\end{equation*}
for all  $\varphi \in \Vsp(\Om)$
\textup(resp.\ for all $0 \le \varphi \in \Vsp(\Om)$\textup)
with $\supp \phi \Subset \Om$.
\end{prop}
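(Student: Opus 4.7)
The plan is an approximation argument by mollification. Necessity is immediate, since $C_c^\infty(\Om)\subset\Vsp(\Om)$ and every function in $C_c^\infty(\Om)$ has compact support in $\Om$. For sufficiency, given $\phi\in\Vsp(\Om)$ with $K:=\supp\phi\Subset\Om$ (and $\phi\ge0$ in the supersolution case), I fix an open set $G$ with $K\Subset G\Subset\Om$ and $\delta:=\dist(K,G^c)>0$. For $0<\eps<\delta$ I set $\phi_\eps:=\phi*\eta_\eps$ for a standard nonnegative mollifier $\eta_\eps$ supported in $B(0,\eps)$, so that $\phi_\eps\in C_c^\infty(G)\subset C_c^\infty(\Om)$ and $\phi_\eps\ge0$ whenever $\phi\ge0$.

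Before mollifying, I would first observe that extending $\phi$ by $0$ outside $\Om$ places it in $\Wsp(\Rn)$: the Gagliardo integrand vanishes on $\Om^c\times\Om^c$, while by symmetry the remaining off-diagonal pieces reduce to a multiple of $\iint_{\Om\times\Om^c}$ and are controlled by $[\phi]_{\Vsp(\Om)}^p$. Standard properties of mollifiers then yield $\phi_\eps\to\phi$ in $\Wsp(\Rn)\cap L^p(\Rn)$, and by hypothesis $\E(u,\phi_\eps)\ge0$ for every such $\eps$. The remaining and main task is to pass to the limit $\E(u,\phi_\eps)\to\E(u,\phi)$.

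Writing $\psi_\eps:=\phi-\phi_\eps$, which vanishes outside $G$, the symmetry of the integrand and its vanishing on $G^c\times G^c$ give the decomposition
\[
\E(u,\psi_\eps)=\iint_{G\times G}F_\eps(x,y)\,dy\,dx+2\iint_{G\times G^c}F_\eps(x,y)\,dy\,dx,
\]
where $F_\eps(x,y)=|u(x)-u(y)|^{p-2}(u(x)-u(y))(\psi_\eps(x)-\psi_\eps(y))k(x,y)$. H\"older's inequality and \eqref{eq-comp-(x,y)} bound the local term by $C\La[u]_{\Wsp(G)}^{p-1}[\psi_\eps]_{\Wsp(G)}\to0$, using $u\in\Wsp_{\mathrm{loc}}(\Om)$.

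The main obstacle is controlling the off-diagonal term over $G\times G^c$, and this is exactly where the tail assumption on $u$ enters. There $\psi_\eps(y)=0$, and the integrand is supported in $x\in K_\eps:=K+\clB(0,\eps)\Subset G$. For $x\in K_\eps$ and $y\in G^c$ one has $|x-y|\ge\delta/2$ together with $|x-y|\ge c(K,G)(1+|y|)$ for large $|y|$, hence $|x-y|^{-(n+sp)}\le C(1+|y|)^{-(n+sp)}$ uniformly in $x\in K_\eps$. Combining this with the elementary bound $|u(x)-u(y)|^{p-1}\le C(|u(x)|^{p-1}+|u(y)|^{p-1})$, H\"older's inequality in $x$, the tail assumption $u\in L^{p-1}_{sp}(\Rn)$, and $u\in L^p(K_\eps)$, the nonlocal term is bounded by $C\|\psi_\eps\|_{L^p(K_\eps)}$ with $C$ independent of $\eps$. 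Since $\phi_\eps\to\phi$ in $L^p$, this vanishes as $\eps\to0^+$, yielding $\E(u,\phi)\ge0$, and the sign is preserved in the supersolution case because $\phi_\eps\ge0$ whenever $\phi\ge0$.
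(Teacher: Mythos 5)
Your proof is correct and follows essentially the same route as the paper's: mollify $\phi$ to get $\phi_\eps\in C_c^\infty(\Om)$ converging to $\phi$, and pass to the limit in $\E(u,\cdot)$. The only difference is that the paper cites the continuity estimate $|\E(u,\phi_\eps-\phi)|\le C(u)\|\phi_\eps-\phi\|_{\Vsp(\Om)}$ from formula (2.9) in Kim--Lee, whereas you derive it directly by splitting into the local piece over $G\times G$ (controlled via H\"older and $u\in\Wsp_{\mathrm{loc}}$) and the nonlocal piece over $G\times G^c$ (controlled via the tail condition $u\in L^{p-1}_{sp}(\Rn)$ and the uniform lower bound on $|x-y|$); note only that your labels ``necessity'' and ``sufficiency'' are swapped relative to the usual convention, though the mathematics assigned to each direction is right.
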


\begin{proof}
By mollification and the fact that  $\supp \phi \Subset \Om$, there are 
$\phi_j\in C_c^\infty(\Om)$ such that $\phi_j\to\phi$ in $\Vsp(\Om)$
as $j \to \infty$.
Moreover, $\phi_j \ge 0$ if $\phi \ge 0$.
The statement then follows from formula (2.9) in Kim--Lee~\cite{KL}, 
which shows that 
\[
|\mathcal{E}(u, \phi_j-\phi)| \le C(u) \|\phi_j-\phi\|_{\Vsp(\Om)}
\]
with $C(u)$ depending on $u$.
\end{proof}

\begin{cor} \label{cor-soln=sub+supersoln}
A function $u$ is a solution in $\Om$
if and only if both $u$ and $-u$ are supersolutions in $\Om$. 
\end{cor}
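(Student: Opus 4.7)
The plan hinges on a simple algebraic observation: the form $\mathcal{E}$ is \emph{odd} in its first argument. Indeed, replacing $u$ by $-u$ does not change $|u(x)-u(y)|^{p-2}$ (since this is an absolute value raised to a real power), while the factor $u(x)-u(y)$ changes sign, so
\[
    \mathcal{E}(-u,\varphi) = -\mathcal{E}(u,\varphi)
\]
for every admissible test function $\varphi$. Membership of $-u$ in $W^{s,p}_{\mathrm{loc}}(\Om)\cap L^{p-1}_{sp}(\R^n)$ is clear since $|-u|=|u|$.

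For the forward direction, suppose $u$ is a solution. Testing with both $\varphi$ and $-\varphi$ for $\varphi\in C_c^\infty(\Om)$ gives $\mathcal{E}(u,\varphi)=0$. Restricting to $\varphi\ge 0$ shows that $u$ is a supersolution, and then $\mathcal{E}(-u,\varphi)=-\mathcal{E}(u,\varphi)=0\ge 0$ shows that $-u$ is a supersolution.

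The converse is the slightly more delicate direction and is the only place where one must be careful. Assuming both $u$ and $-u$ are supersolutions, one has $\mathcal{E}(u,\varphi)\ge 0$ and $-\mathcal{E}(u,\varphi)=\mathcal{E}(-u,\varphi)\ge 0$ for every $0\le\varphi\in C_c^\infty(\Om)$, hence $\mathcal{E}(u,\varphi)=0$ for such $\varphi$. To upgrade this to all $\varphi\in C_c^\infty(\Om)$, I would decompose $\varphi=\varphi_\limplus-\varphi_\limminus$. The main (minor) obstacle is that $\varphi_\limplus$ and $\varphi_\limminus$ are only Lipschitz, not smooth, so they are not legitimate test functions in Definition~\ref{def-supersol}. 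This is precisely what Proposition~\ref{prop-soln-Wsp} is designed to handle: since $\varphi_\limpm\in\Lipc(\Om)\subset\Vsp(\Om)$ with $\supp\varphi_\limpm\Subset\Om$, the supersolution condition applies to them, giving $\mathcal{E}(u,\varphi_\limpm)\ge 0$ and $\mathcal{E}(-u,\varphi_\limpm)=-\mathcal{E}(u,\varphi_\limpm)\ge 0$, hence $\mathcal{E}(u,\varphi_\limpm)=0$.

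Finally, linearity of $\mathcal{E}$ in its second argument yields
\[
    \mathcal{E}(u,\varphi)=\mathcal{E}(u,\varphi_\limplus)-\mathcal{E}(u,\varphi_\limminus)=0\ge 0
\]
for every $\varphi\in C_c^\infty(\Om)$, so $u$ is a solution. Thus the only nontrivial ingredient beyond the oddness identity is the extension of admissible test functions provided by Proposition~\ref{prop-soln-Wsp}.
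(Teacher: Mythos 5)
Your proof is correct and matches the paper's argument: the paper likewise notes that one direction is trivial and that the converse follows from Proposition~\ref{prop-soln-Wsp} via the decomposition $\phi=\phi_\limplus-\phi_\limminus$. You have simply spelled out the oddness identity $\mathcal{E}(-u,\varphi)=-\mathcal{E}(u,\varphi)$ and the linearity in the second argument that the paper leaves implicit.
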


\begin{proof}
One implication is trivial, and the other follows
from Proposition~\ref{prop-soln-Wsp} upon using that  $\phi=\phi_\limplus-\phi_\limminus$.
\end{proof}

The following result shows that when solutions have some additional regularity, 
more test functions are allowed.

\begin{prop} \label{prop-soln-Vsp}
Let $u \in \Vsp(\Om)$.
Then $u$ is a 
solution\/ \textup(resp.\ supersolution\/\textup) in $\Om$ if 
and only if
\begin{equation*}
\mathcal{E}(u, \varphi)  \ge 0 
\end{equation*}
for all  $\varphi \in \Vspo(\Omega)$
\textup(resp.\ for all $0 \le \varphi \in \Vspo(\Omega)$\textup).
\end{prop}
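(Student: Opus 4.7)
The trivial direction (sufficiency) is immediate, since $C_c^\infty(\Om)\subset\Vspo(\Om)$. The substance lies in the converse: assuming $\mathcal{E}(u,\phi)\ge 0$ for all (nonneg) $\phi\in C_c^\infty(\Om)$, one must enlarge the class of test functions up to $\Vspo(\Om)$ when $u\in\Vsp(\Om)$. The plan is to push the natural density of $C_c^\infty(\Om)$ in $\Vspo(\Om)$ through the form $\mathcal{E}(u,\cdot)$ by means of a Hölder-type continuity estimate, and then to handle the sign constraint in the supersolution case through a positive-part truncation.

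First I would establish the crucial estimate
\[
   |\mathcal{E}(u,\psi)| \le 2\Lambda\,[u]_{\Vsp(\Om)}^{p-1}\,[\psi]_{\Vsp(\Om)}
   \qquad\text{for every }\psi\text{ vanishing outside }\Om.
\]
This is where the hypothesis $u\in\Vsp(\Om)$ (rather than merely $\Wsp_{\mathrm{loc}}$) is used. The integrand defining $\mathcal{E}(u,\psi)$ is symmetric in $(x,y)$ and vanishes on $\Om^c\times\Om^c$ whenever $\psi\equiv 0$ outside $\Om$, so by symmetry the double integral over $\R^n\times\R^n$ is bounded by $2\int_{\Om\times\R^n}$. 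Combining this with the upper ellipticity bound in \eqref{eq-comp-(x,y)} and Hölder's inequality with conjugate exponents $p/(p-1)$ and $p$ yields the claim. With this estimate in hand, the solution case is immediate: given $\phi\in\Vspo(\Om)$, pick $\phi_j\in C_c^\infty(\Om)$ with $\phi_j\to\phi$ in $\Vsp(\Om)$; then $\mathcal{E}(u,\phi_j)=0$ (applying the hypothesis and Corollary~\ref{cor-soln=sub+supersoln}), and the estimate gives $\mathcal{E}(u,\phi-\phi_j)\to 0$.

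For the supersolution case, given $0\le\phi\in\Vspo(\Om)$ and approximants $\phi_j\in C_c^\infty(\Om)$ with $\phi_j\to\phi$ in $\Vsp(\Om)$, I would replace $\phi_j$ by $\widetilde\phi_j:=(\phi_j)_\limplus$. Each $\widetilde\phi_j$ is Lipschitz, nonnegative, and has compact support contained in $\supp\phi_j\Subset\Om$, so $\widetilde\phi_j\in\Vsp(\Om)$ with $\supp\widetilde\phi_j\Subset\Om$, and Proposition~\ref{prop-soln-Wsp} applies to give $\mathcal{E}(u,\widetilde\phi_j)\ge 0$. Passing to the limit via the estimate requires that $\widetilde\phi_j\to\phi_\limplus=\phi$ in $\Vsp(\Om)$; this is the main technical point.

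The hard part is precisely this continuity of the positive-part operation in $\Vsp(\Om)$. Convergence in $L^p(\Om)$ is clear from $|(\phi_j)_\limplus-\phi_\limplus|\le|\phi_j-\phi|$. For the seminorm, after passing to a subsequence with $\phi_j\to\phi$ a.e., the difference quotients
\[
  f_j(x,y):=\frac{(\phi_j)_\limplus(x)-(\phi_j)_\limplus(y)}{|x-y|^{n/p+s}}
\]
converge a.e.\ in $\Om\times\R^n$ to the corresponding expression for $\phi_\limplus$, and they are dominated pointwise by $g_j(x,y):=|\phi_j(x)-\phi_j(y)|\,|x-y|^{-n/p-s}$, which converges in $L^p(\Om\times\R^n)$. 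A generalized dominated convergence argument (applied to $|f_j-f|^p\le 2^{p-1}(g_j^p+g^p)$ with the $L^1$-convergent majorant $2^{p-1}(g_j^p+g^p)$) then yields $L^p$-convergence of $f_j$, and hence $\widetilde\phi_j\to\phi$ in $\Vsp(\Om)$. The estimate from the previous paragraph now finishes the proof.
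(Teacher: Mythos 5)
Your proof is correct and follows essentially the same route as the paper: approximate $\varphi\in\Vspo(\Om)$ by $C_c^\infty(\Om)$ functions, pass to positive parts, and push the convergence through $\mathcal{E}(u,\cdot)$ via the H\"older bound $|\mathcal{E}(u,\psi)|\le 2\Lambda[u]_{\Vsp(\Om)}^{p-1}[\psi]_{\Vsp(\Om)}$, which is exactly what the paper does (citing the estimate from Kim--Lee and using Proposition~\ref{prop-soln-Wsp} for the truncated approximants). The one place you diverge is in proving $(\phi_j)_\limplus\to\varphi$ in $\Vsp(\Om)$: you invoke generalized dominated convergence with the $j$-dependent $L^1$-convergent majorant $2^{p-1}(g_j^p+g^p)$, whereas the paper's Lemma~\ref{lem-truncation} (applied with $g\equiv0$) passes to a rapidly convergent subsequence and builds a single fixed $L^p$ dominating function; both arguments are valid and correctly avoid the pitfall of trying to bound $|f_j-f|$ directly by $|v_j-v|$.
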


In contrast to Proposition~\ref{prop-soln-Wsp} we will also
need the following lemma about convergence of truncations in order to prove
Proposition~\ref{prop-soln-Vsp}.

\begin{lem} \label{lem-truncation}
Assume that $g\in V^{s, p}(\Om)$ and $u_j \to u$ in ${V^{s, p}(\Om)}$ as $j\to\infty$.
Let 
\[
\ub=\min\{u,g\} \quad \text{and} \quad \ub_j=\min\{u_j,g\}.
\]
Then $\ub_j \to \ub$ in $V^{s, p}(\Om)$ as $j\to\infty$.
\end{lem}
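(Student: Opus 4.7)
The plan is to treat the two parts of the $V^{s,p}(\Om)$-norm separately. The $L^p(\Om)$ part is immediate: since $t \mapsto \min\{t, c\}$ is $1$-Lipschitz, the pointwise bound $|\ub_j(x) - \ub(x)| \le |u_j(x) - u(x)|$ on $\Om$ yields $\|\ub_j - \ub\|_{L^p(\Om)} \le \|u_j - u\|_{L^p(\Om)} \to 0$. So essentially all the work is in the Gagliardo seminorm $[\,\cdot\,]_{V^{s,p}(\Om)}$.

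For that seminorm, the first pointwise inequality I would establish is
\[ |\min\{a_1,b_1\} - \min\{a_2,b_2\}| \le |a_1 - a_2| + |b_1 - b_2|, \]
easiest to see from $\min\{a,b\} = \tfrac{1}{2}(a+b - |a-b|)$ together with $\bigl||r|-|s|\bigr| \le |r-s|$. Applied with $(a_i,b_i) = (u_j,g)$ and $(u,g)$ at $x$ and at $y$, and then split by the triangle inequality, this gives
\[ |(\ub_j - \ub)(x) - (\ub_j - \ub)(y)| \le |u_j(x) - u_j(y)| + |u(x) - u(y)| + 2|g(x) - g(y)|. \]
Raising to the $p$-th power (with a convexity constant $C_p$) and dividing by $|x-y|^{n+sp}$ produces a dominating sum of three nonnegative integrands on $\Om \times \R^n$: the one built from $u_j$ converges in $L^1(\Om \times \R^n)$ to the corresponding $u$-integrand (since $u_j \to u$ in $V^{s,p}(\Om)$), while the $u$- and $g$-pieces are fixed $L^1$ functions by hypothesis.

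The final step is to pass to a subsequence along which $u_j \to u$ a.e.\ on $\Om$ (from the $L^p$-convergence) and then use Fubini on the Gagliardo integrand of $u_j - u$ (which is $L^1$-small, hence a.e.-small along a further subsequence) to propagate a.e.\ convergence from a well-chosen base point $x_0 \in \Om$ to a.e.\ $y \in \R^n$. Along this subsequence $\ub_j \to \ub$ pointwise a.e.\ in $\R^n$, so the seminorm integrand vanishes a.e., and the generalized dominated convergence theorem (where the dominating sequence converges in $L^1$ rather than being controlled by a single $L^1$ function) gives $[\ub_j - \ub]_{V^{s,p}(\Om)} \to 0$ along the subsequence. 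A standard subsequence-of-subsequence argument upgrades this to convergence of the full sequence.

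The main obstacle is recognizing that the classical dominated convergence theorem is insufficient here: the natural majorant depends on $j$ through the Gagliardo density of $u_j$, and no single $L^1$ function bounds all of these simultaneously. The remedy is the generalized DCT, made available precisely because $u_j \to u$ in $V^{s,p}(\Om)$ delivers $L^1$-convergence of that density.
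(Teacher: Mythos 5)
Your proof is correct and structurally parallel to the paper's, but you replace its key technical device with a different one. The shared skeleton is: the $L^p(\Om)$-part is immediate from $1$-Lipschitzness of the truncation, the Gagliardo part rests on the pointwise bound $|\vb_j - \vb| \le |v_j| + |v| + 2g_0$ (you derive it from $\min\{a,b\} = \tfrac12(a+b-|a-b|)$, the paper from a triangle-inequality chain via $\min\{u(y),g(x)\}$ — same inequality), and one must pass to a subsequence to obtain a.e.\ convergence of $\vb_j$ before invoking some form of dominated convergence. The divergence is in how the dominating object is produced. The paper first thins to $\|v_j - v\|_{L^p(\mu)} < 2^{-j}$ so that the series $v_0 = \sum_j |v_j - v|$ lies in $L^p(\mu)$, giving a \emph{single} $L^p$ majorant $v_0 + 2|v| + 2g_0$ for all $j$, and then applies the classical dominated convergence theorem inside a proof by contradiction. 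You instead raise the pointwise bound to the $p$-th power, observe that $|v_j|^p \to |v|^p$ in $L^1(\mu)$ (a standard consequence of $v_j \to v$ in $L^p(\mu)$), and apply the \emph{generalized} dominated convergence theorem (Pratt's lemma), upgrading via the subsequence-of-subsequence principle rather than by contradiction. Both are correct; yours avoids the somewhat ad hoc geometric-decay subsequence and series construction at the cost of citing a less elementary convergence theorem. One minor remark: your route to a.e.\ convergence of $\vb_j$ (passing $u_j \to u$ a.e.\ on $\Om$ through a Fubini argument about a base point $x_0$) is a little more circuitous than the paper's, which directly extracts a subsequence with $v_j \to v$ $\mu$-a.e.\ from the $L^p(\mu)$-convergence and then combines it with $u_j \to u$ a.e.\ on $\Om$; but both implementations rest on the same Fubini observation and are fine.
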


This probably belongs to folklore but since we have not been able to find 
a reference, we provide a proof.
See however Costea~\cite[Lemma~2.2]{costea07}, 
which is similar but deals with a 
different nonlocal norm.
To indicate a certain subtlety in the argument, we point out that 
even though  
$|{\min\{a,m\} - \min\{b,m\}}| \le |a-b|$ 
always holds, here we need to estimate
\[
|(\min\{a,m\} - \min\{b,m\}) - (\min\{c,t\} - \min\{d,t\})|,
\]
which can in general be larger than $|(a-b)-(c-d)|$, e.g.\ when 
$a=d=m=t=0$ and $b=-c\ne0$.
So the proof is not straightforward even if $m=t=0$ (or $g \equiv 0$).
We will need this result with nonconstant $g$ when proving
Lemma~\ref{lem-Vsp0-simple}.

\begin{proof}
Let 
\begin{alignat*}{2}
v_j(x,y) & = u_j(x)-u_j(y), &\quad  v(x,y) & = u(x)-u(y), \\
\vb_j(x,y) & = \ub_j(x)-\ub_j(y), &\quad   \vb(x,y) & = \ub(x)-\ub(y), 
\end{alignat*}
and define the product measure $\mu$ on $\Om\times\R^n$ 
by
\[
d\mu(x,y) = \frac{dy \,dx}{|x-y|^{n+sp}}.
\]
Since $u_j \to u$ in ${V^{s, p}(\Om)}$, we see that
\[
\|\ub_j-\ub\|_{L^p(\Om)} \le\|u_j-u\|_{L^p(\Om)} \to0
\quad \text{and} \quad 
\|v_j-v\|_{L^p(\Om\times\R^n,\mu)} \to0,
\]
as $j \to \infty$.
It is therefore enough to show that $\|\vb_j-\vb\|_{L^p(\Om\times\R^n,\mu)} \to0$.
Assume for a contradiction that 
there is $\eps>0$ and a subsequence (also denoted by $\{u_j\}_{j=1}^\infty$)
such that
\begin{equation} \label{eq-trunc-contradiction}
\|\vb_j-\vb\|_{L^p(\Om\times\R^n,\mu)} \ge \eps
\quad \text{for } j=1,2,\dots.
\end{equation}
By passing to another subsequence if necessary we can also require that
\[
\|v_j-v\|_{L^p(\Om\times\R^n,\mu)} < 2^{-j}
\qquad \text{and} \qquad 
v_j\to v \quad  \text{$\mu$-a.e.\ in } \Om\times \R^n
\text{ as $j \to \infty$}.
\]
Clearly, also 
\begin{equation}   \label{eq-ae-conv-g} 
\vb_j\to \vb \quad \text{$\mu$-a.e.\ in $\Om\times \R^n$, as $j \to \infty$.}
\end{equation}
To use dominated convergence, we define 
\begin{equation}  \label{eq-def-v_0-g}
v_0 = \sum_{j=1}^\infty |v_j-v| \in L^p(\Om\times\R^n,\mu).
\end{equation}
The triangle inequality and a simple comparison show that
\begin{align*}
|\vb(x,y)|  &\le |{\min\{u(x),g(x)\} - \min\{u(y),g(x)\}}|  \\
& \quad   
 + |{\min\{u(y),g(x)\} - \min\{u(y),g(y)\}}| \nonumber \\
& \le |u(x) - u(y)| + |g(x) - g(y)| \\
&=: |v(x,y)| +  g_0(x,y) \nonumber
\end{align*}
and similarly
$|\vb_j(x,y)| \le |v_j(x,y)| + g_0(x,y)$.
Using~\eqref{eq-def-v_0-g} and 
$u,g \in \Vsp(\Om)$, we then get that
\begin{align*}
|\vb_j - \vb| &\le |\vb_j| + |\vb| \le |v_j| +|v| + 2g_0 \\
&\le  |v_j - v| + 2|v| + 2g_0
\le v_0 + 2|v| + 2g_0 \in L^p(\Om\times\R^n,\mu).
\end{align*}

Therefore dominated convergence and \eqref{eq-ae-conv-g} give
\[
\|\vb_j-\vb\|_{L^p(\Om\times\R^n,\mu)} \to0,
\quad \text{as } j\to\infty,
\]
contradicting \eqref{eq-trunc-contradiction}.
\end{proof}

\begin{proof}[Proof of Proposition~\ref{prop-soln-Vsp}]
Suppose that $u$ is a supersolution in $\Om$ and let $0 \leq \varphi \in \Vspo(\Om)$.
By definition,  there exist $\phi_j \in C^\infty_c(\Om)$
such that $\phi_j \to \varphi$ in $\Vsp(\Om)$ as $j \to \infty$.
Hence
$(\phi_j)_\limplus \in \Lip_c(\Om) \subset \Vsp(\Om)$.
It now follows from H\"older's inequality  and Lemma~\ref{lem-truncation} that
\begin{equation*}
|\mathcal{E}(u, (\phi_j)_\limplus) - \mathcal{E}(u, \varphi)|
\leq 2\Lambda [u]_{\VspOm}^{p-1}  [(\phi_j)_\limplus-\varphi]_{\VspOm}
\to 0,  \quad \text{as } j \to \infty,
\end{equation*}
where $\La\ge1$ is as in~\eqref{eq-comp-(x,y)}.
Since $\supp {(\phi_j)_\limplus} \Subset \Om$, 
Proposition~\ref{prop-soln-Wsp} shows that
\[
   \mathcal{E}(u, \varphi) 
  = \lim_{j \to \infty} \mathcal{E}(u, (\phi_j)_\limplus) \geq 0.
\]
The proof for solutions is similar or follows from the above 
and Corollary~\ref{cor-soln=sub+supersoln}.
\end{proof}

The following two lemmas may also belong to folklore.
They may actually have been used in some earlier papers,
but we are not aware of any written proofs.
It seems that their proofs require some form
of Lemma~\ref{lem-truncation}.

\begin{lem} \label{lem-Vspo-limplus}
If $u \in \Vspo(\Om)$, then $u_\limplus \in \Vspo(\Om)$.
Moreover, there are $0  \le v_j\in C_c^\infty(\Om)$ 
such that $v_j \to u_\limplus$ in $\Vsp(\Om)$ as $j \to \infty$.
\end{lem}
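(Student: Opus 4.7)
The strategy is to reduce the statement to Lemma~\ref{lem-truncation} via the identity $u_\limplus = -\min\{-u,0\}$. By definition of $\Vspo(\Om)$, pick $\phi_j \in C_c^\infty(\Om)$ with $\phi_j \to u$ in $\Vsp(\Om)$; then also $-\phi_j \to -u$ in $\Vsp(\Om)$. Applying Lemma~\ref{lem-truncation} with $g \equiv 0 \in \Vsp(\Om)$ to the sequence $-\phi_j$ yields
\[
\min\{-\phi_j, 0\} \to \min\{-u, 0\} \quad \text{in } \Vsp(\Om),
\]
and multiplying by $-1$ gives $(\phi_j)_\limplus \to u_\limplus$ in $\Vsp(\Om)$. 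Since $\phi_j \in C_c^\infty(\Om)$ is Lipschitz with compact support in $\Om$, so is $(\phi_j)_\limplus$, and hence $(\phi_j)_\limplus \in \Lipc(\Om) \subset \Vspo(\Om)$. As $\Vspo(\Om)$ is closed in $\Vsp(\Om)$ by definition, the limit $u_\limplus$ lies in $\Vspo(\Om)$, proving the first assertion.

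For the second assertion, it remains to upgrade each $(\phi_j)_\limplus$ to a nonnegative $C_c^\infty$ approximation. Fix $j$ and let $\eta_\eps$ be a standard nonnegative radial mollifier. Then $v_{j,\eps} := \eta_\eps * (\phi_j)_\limplus$ is nonnegative, smooth, and compactly supported in $\Om$ as soon as $\eps$ is smaller than the distance from $\supp \phi_j$ to $\bdy \Om$. Since $(\phi_j)_\limplus$ is Lipschitz with compact support in $\Rn$, it belongs to $\Wsp(\Rn) = \Vsp(\Rn)$, and standard mollification estimates in fractional Sobolev spaces give $v_{j,\eps} \to (\phi_j)_\limplus$ in $\Vsp(\Om)$ as $\eps \to 0^+$ (the $\Vsp(\Om)$-norm is dominated by the $\Wsp(\Rn)$-norm on functions defined on $\Rn$). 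A diagonal extraction then produces nonnegative $v_j \in C_c^\infty(\Om)$ with $v_j \to u_\limplus$ in $\Vsp(\Om)$.

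\textbf{Expected obstacle.} The one delicate step is the passage $\phi_j \to u \Longrightarrow (\phi_j)_\limplus \to u_\limplus$ in $\Vsp(\Om)$. Pointwise truncation is not compatible with the nonlocal seminorm of $\Vsp$ in the naive way: although $|t_\limplus - s_\limplus| \le |t-s|$, the corresponding double-difference inequality fails in general, exactly as the paper's remark preceding Lemma~\ref{lem-truncation} emphasises. This is precisely the issue settled by the dominated-convergence argument inside that lemma, so here it suffices to invoke it as a black box with $g \equiv 0$; everything else in the proof is a routine mollification and diagonal selection.
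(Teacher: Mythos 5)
Your proof is correct and takes essentially the same route as the paper's: approximate $u$ by $\phi_j \in C_c^\infty(\Om)$, invoke Lemma~\ref{lem-truncation} (via the identity $u_\limplus = -\min\{-u,0\}$, which the paper uses implicitly) to deduce $(\phi_j)_\limplus \to u_\limplus$ in $\Vsp(\Om)$, note $(\phi_j)_\limplus \in \Lipc(\Om)$ with $\supp(\phi_j)_\limplus \Subset \Om$, and mollify with a diagonal extraction to produce the nonnegative $C_c^\infty$ approximants. The only cosmetic difference is that you derive the first assertion separately from closedness of $\Vspo(\Om)$, whereas the paper obtains both claims at once from the mollification step.
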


\begin{lem} \label{lem-Vsp-C}
Assume that $\Om$ is bounded.
If $u \in \Vsp(\Om)\cap C(\R^n)$ and $u=0$ on $\Omc$, then $u \in \Vspo(\Om)$.
\end{lem}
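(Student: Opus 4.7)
The plan is to approximate $u$ in the $\Vsp(\Om)$-norm by $C_c^\infty(\Om)$ functions in two stages: first truncate $u$ so as to have compact support in $\Om$, then mollify. By writing $u = u_\limplus - u_\limminus$ (each piece lies in $\Vsp(\Om)\cap C(\R^n)$ since $t\mapsto t_\limplus$ is 1-Lipschitz, and each vanishes on $\Omc$), it suffices to treat the case $u\ge 0$.

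For $\eps>0$ set $u_\eps=(u-\eps)_\limplus$. Because $u$ is continuous on $\R^n$, vanishes on $\Omc$, and $\Om$ is bounded, the set $\{u\ge\eps\}$ is a compact subset of $\Om$; hence $\supp u_\eps\Subset\Om$. I claim $u_\eps\to u$ in $\Vsp(\Om)$ as $\eps\to0^+$. For the $L^p$-part this is immediate since $u-u_\eps=\min\{u,\eps\}\le\eps$ pointwise. For the seminorm, the 1-Lipschitz property of $t\mapsto\min\{t,\eps\}$ gives
\begin{equation*}
\frac{|\min\{u(x),\eps\}-\min\{u(y),\eps\}|^p}{|x-y|^{n+sp}}
\le \frac{|u(x)-u(y)|^p}{|x-y|^{n+sp}},
\end{equation*}
which is integrable over $\Om\times\R^n$ because $u\in\Vsp(\Om)$. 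Since the left-hand side tends to $0$ pointwise as $\eps\to0^+$ (using $u\ge0$), dominated convergence yields $[u-u_\eps]_{\Vsp(\Om)}\to0$.

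Next fix $\eps>0$. Since $u_\eps=0$ on $\Omc$, a symmetric split of the Gagliardo integral over $\R^n\times\R^n$ together with vanishing on $\Omc\times\Omc$ gives $[u_\eps]_{\Wsp(\R^n)}^p \le 2[u_\eps]_{\Vsp(\Om)}^p$, so $u_\eps\in\Wsp(\R^n)$. Standard mollifications $\eta_\de*u_\eps$ then belong to $C_c^\infty(\Om)$ for every $\de<\dist(\supp u_\eps,\Omc)$ and converge to $u_\eps$ in $\Wsp(\R^n)$, hence in $\Vsp(\Om)$. Combining with the previous step produces the desired $C_c^\infty(\Om)$-approximations of $u$, so $u\in\Vspo(\Om)$.

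The main obstacle is the dominated-convergence step: one needs a single integrable majorant that works uniformly in $\eps$, and the 1-Lipschitz bound supplies exactly $|u(x)-u(y)|^p/|x-y|^{n+sp}$, which is integrable by hypothesis. The role of the hypotheses is sharp: continuity of $u$ together with $u=0$ on $\Omc$ and boundedness of $\Om$ are all used to ensure that $\{u\ge\eps\}$ is a compact subset of $\Om$, without which the mollification step would fail.
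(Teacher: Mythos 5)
Your proof is correct and follows the same overall plan as the paper's (reduce to $u\ge 0$, truncate to compact support, mollify), but you handle the key convergence step differently, and in a way that is actually more self-contained here. The paper deduces $(u-1/j)_\limplus\to u$ in $\Vsp(\Om)$ by invoking Lemma~\ref{lem-truncation}, whose proof is delicate precisely because one must compare two different functions $u_j$ and $u$ after truncation, where the naive pointwise Lipschitz bound does not yield a uniform majorant. Your argument sidesteps that difficulty entirely: you compare $u$ only with its own modification $u_\eps=(u-\eps)_\limplus$, so that $u-u_\eps=\min\{u,\eps\}$, and the single-truncation Lipschitz estimate $|\min\{u(x),\eps\}-\min\{u(y),\eps\}|\le|u(x)-u(y)|$ furnishes an $\eps$-independent integrable majorant; dominated convergence then does the rest. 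This makes your proof of Lemma~\ref{lem-Vsp-C} independent of Lemma~\ref{lem-truncation}, at the small cost of not being reusable for the more general setting (two converging sequences, nonconstant $g$) that Lemma~\ref{lem-truncation} is designed to cover and that the paper needs elsewhere, e.g.\ in Lemma~\ref{lem-Vsp0-simple} and Theorem~\ref{thm-conv-obst-prob}. Your observations that $\{u\ge\eps\}\Subset\Om$ uses continuity on $\Rn$, vanishing on $\Omc$, and boundedness of $\Om$, and that $[u_\eps]_{\Wsp(\Rn)}^p\le 2[u_\eps]_{\Vsp(\Om)}^p$ justifies the mollification step, are both correct and complete the argument.
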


Lemma~\ref{lem-Vsp-C} is not true in general without the assumption $u \in C(\R^n)$, see e.g.\ Fiscella--Servadei--Valdinoci~\cite[Remark~7]{FSV15}.

\begin{proof}[Proof of Lemma~\ref{lem-Vspo-limplus}]
By assumption, there are $u_j \in C^\infty_c(\Om)$
such that $u_j \to u$ in $\Vsp(\Om)$ as $j \to \infty$.
Thus
$(u_j)_\limplus \in \Lip_c(\Om) \subset \Vsp(\Om)$.
By Lemma~\ref{lem-truncation}, $(u_j)_\limplus \to u_\limplus$ in $\Vsp(\Om)$ as $j \to \infty$.
Since $\supp {(u_j)_\limplus} \Subset \Om$, 
there are also mollifications $v_j\in C_c^\infty(\Om)$ of $(u_j)_\limplus$
such that $0 \le v_j \to u_\limplus$ in $\Vsp(\Om)$,
i.e.\ $u_\limplus \in \Vspo(\Om)$.
\end{proof}

\begin{proof}[Proof of Lemma~\ref{lem-Vsp-C}]
By splitting $u$ into $u_\limplus$ and $u_\limminus$,
we may assume that $u \ge 0$.
As $\Om$ is bounded, $u-1/j \to u$ in $\Vsp(\Om)$, and
thus by Lemma~\ref{lem-truncation} also $(u-1/j)_\limplus \to u$ in $\Vsp(\Om)$,
as $j \to \infty$.
Since $\supp {(u-1/j)_\limplus} \Subset \Om$ (due to $u\in C(\Rn)$),
there are also mollifications $v_j\in C_c^\infty(\Om)$ of $(u-1/j)_\limplus$
such that $0 \le v_j \to u$ in $\Vsp(\Om)$,
i.e.\ $u \in \Vspo(\Om)$.
\end{proof}

\begin{lem}\label{lem-Vsp0-simple}
Let $\Om_1, \Om_2 \subset \R^n$ be open sets and
$0\le u\in \Vsp(\Om_1\cap \Om_2)$.
Assume that  $u \le u_j$ a.e.\ in $\R^n$ for some
$u_j \in \Vsp_0(\Om_j)$,  $j=1,2$.
Then $u \in \Vsp_0(\Om_1\cap \Om_2)$.
\end{lem}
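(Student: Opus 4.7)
The plan is to explicitly construct an approximating sequence in $C_c^\infty(\Om_1 \cap \Om_2)$ for $u$ by combining smooth approximants of $u_1$ and $u_2$ with truncations of $u$, relying on Lemma~\ref{lem-truncation} for $\Vsp$-convergence. First I would replace each $u_j$ by $(u_j)_\limplus$; by Lemma~\ref{lem-Vspo-limplus} this stays in $\Vspo(\Om_j)$, and since $u \ge 0$ we still have $u \le (u_j)_\limplus$ a.e.\ in $\R^n$. So we may assume $u_j \ge 0$. Lemma~\ref{lem-Vspo-limplus} then yields $0 \le \phi_{j,k} \in C_c^\infty(\Om_j)$ with $\phi_{j,k} \to u_j$ in $\Vsp(\Om_j)$ as $k \to \infty$, for $j = 1,2$. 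Set $\psi_k := \min\{\phi_{1,k}, \phi_{2,k}\}$; nonnegativity and the compactness of $\supp \phi_{j,k}$ in $\Om_j$ give $\psi_k \in \Lipc(\Om_1 \cap \Om_2)$.

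Next I would show that $\psi_k \to \psi := \min\{u_1, u_2\}$ in $\Vsp(\Om_1 \cap \Om_2)$. Directly from the definition of the $\Vsp$-norm one has the continuous inclusion $\Vsp(\Om_j) \hookrightarrow \Vsp(\Om_1 \cap \Om_2)$, so $\phi_{j,k} \to u_j$ in $\Vsp(\Om_1 \cap \Om_2)$ as well. Writing $\min\{a,b\} = \tfrac12(a + b - |a-b|)$, the only nontrivial ingredient is continuity of $f \mapsto |f|$ on $\Vsp(\Om_1 \cap \Om_2)$, which I would verify by a dominated-convergence argument of the same shape as the proof of Lemma~\ref{lem-truncation}, using the pointwise bound $||f_k(x)| - |f_k(y)| - |f(x)| + |f(y)|| \le |f_k(x) - f_k(y)| + |f(x) - f(y)|$ together with a summable-tail dominant. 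Now set $w_k := \min\{u, \psi_k\}$; then $0 \le w_k \le \psi_k$, so $\supp w_k \subset \supp \psi_k \Subset \Om_1 \cap \Om_2$. Applying Lemma~\ref{lem-truncation} with varying sequence $\psi_k$ and fixed function $u \in \Vsp(\Om_1 \cap \Om_2)$ (the two arguments of $\min$ in that lemma play interchangeable roles) gives $w_k \to \min\{\psi, u\}$ in $\Vsp(\Om_1 \cap \Om_2)$. Since $u \le u_j$ a.e.\ for $j = 1,2$ forces $\min\{\psi, u\} = u$ a.e., we conclude $w_k \to u$ in $\Vsp(\Om_1 \cap \Om_2)$.

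Finally, each $w_k$ lies in $\Vsp(\Om_1 \cap \Om_2)$ with compact support inside $\Om_1 \cap \Om_2$, so standard mollification with radius less than $\dist(\supp w_k, \bdy(\Om_1 \cap \Om_2))$ produces $v_{k,m} \in C_c^\infty(\Om_1 \cap \Om_2)$ converging to $w_k$ in $\Vsp(\Om_1 \cap \Om_2)$ as $m \to \infty$; a diagonal extraction $v_{k,m(k)} \to u$ then places $u$ in $\Vspo(\Om_1 \cap \Om_2)$. The step I expect to be the main obstacle is the convergence $\psi_k \to \psi$: because both $\phi_{1,k}$ and $\phi_{2,k}$ vary simultaneously inside a nonlinear (min) combination, Lemma~\ref{lem-truncation} cannot be applied directly twice, and one must isolate continuity of $|\cdot|$ on $\Vsp$ by a dedicated dominated-convergence argument. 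Once that is established, everything else is one further application of Lemma~\ref{lem-truncation} and routine mollification.
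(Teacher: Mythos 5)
Your proof is correct and follows the same basic strategy as the paper's: approximate $u_1,u_2$ by smooth compactly supported functions, take a pointwise minimum with $u$ to get compactly supported approximants of $u$ in $\Om_1\cap\Om_2$, invoke Lemma~\ref{lem-truncation} for $\Vsp$-convergence, and finish by mollification. You are also right that the paper's one-line appeal to Lemma~\ref{lem-truncation} elides a subtlety: the minimum $\min\{u,(v_{i,1})_\limplus,(v_{i,2})_\limplus\}$ has \emph{two} simultaneously varying entries, whereas the lemma as stated holds one entry fixed. However, the ``dedicated dominated-convergence argument'' you propose for the continuity of $f\mapsto|f|$ on $\Vsp$ is unnecessary overhead; it simply re-runs the proof of Lemma~\ref{lem-truncation}. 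Since $|f|=f-2\min\{f,0\}$ and $f_\limplus=f-\min\{f,0\}$, Lemma~\ref{lem-truncation} applied with the fixed function $g\equiv 0\in\Vsp$ already gives that $f\mapsto|f|$ and $f\mapsto f_\limplus$ are continuous on $\Vsp(\Om_1\cap\Om_2)$, and then the two-varying-entry minimum is handled by the identity $\min\{\alpha_i,\beta_i\}=\alpha_i-(\alpha_i-\beta_i)_\limplus$: taking $\alpha_i:=\min\{u,(\phi_{1,i})_\limplus\}\to u$ and $\beta_i:=\min\{u,(\phi_{2,i})_\limplus\}\to u$ (each a direct one-entry application of Lemma~\ref{lem-truncation}), one gets $\alpha_i-\beta_i\to 0$, hence $(\alpha_i-\beta_i)_\limplus\to 0$, hence $\min\{u,(\phi_{1,i})_\limplus,(\phi_{2,i})_\limplus\}=\min\{\alpha_i,\beta_i\}\to u$ in $\Vsp(\Om_1\cap\Om_2)$, with Lemma~\ref{lem-truncation} used only as a black box. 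Either way the result holds; your version just does more work than needed.
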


\begin{proof}
For $j=1,2$, let $v_{i,j}\in C_c^\infty(\Om_j)$ be such that $v_{i,j}\to u_j$
in $\Vsp(\Om_j)$ as $i \to \infty$.
Then by Lemma~\ref{lem-truncation},
\[
v_i:= \min\{u,(v_{i,1})_\limplus,(v_{i,2})_\limplus\} \to u
\quad \text{in }  \Vsp(\Om_1 \cap \Om_2), \text{ as } i \to \infty.
\]
Since $\supp v_i \Subset\Om_1 \cap \Om_2$, a standard mollification argument finishes the proof.
\end{proof}

\begin{cor}\label{cor-police}
Let $u \in \VspOm$ and $v_1,v_2 \in \Vspo(\Om)$.
If $v_1 \leq u \leq v_2$ a.e.\ in $\R^n$, then $u \in \Vspo(\Om)$.
\end{cor}

\begin{proof}
By taking differences, we may assume that $v_1 \equiv 0$.
The result now follows from Lemma~\ref{lem-Vsp0-simple} upon letting $u_j=v_2$
and $\Om_j = \Om$, $j=1,2$.
\end{proof}

\section{The Dirichlet and obstacle problems}
\label{sec-D-obstacle}

\emph{From now on, we always assume that $\Om \subset \Rn$
is a nonempty bounded  open set.}

\medskip

In this section, we study the obstacle problem, which plays an important role 
in the development of potential theory.
The Dirichlet problem (in the Sobolev sense)
is a special case of the obstacle problem, namely with 
the obstacle $\psi \equiv -\infty$, i.e.\ without an obstacle.
Recall from Section~\ref{sec-sobolev} that all functions are defined
pointwise everywhere
and that $v-g\in \Vsp_0(\Om)$ implies
that $v \equiv g$ in $\Omc$.

\begin{deff}
For $g\in \Vsp(\Om)$ and $\psi:\Om\to\eR$, let
\[
\K_{\psi,g}(\Om) = \{v \in \Vsp(\Om):  v-g\in \Vsp_0(\Om) \text{ and }
v\ge \psi \text { a.e.\ in } \Om\}.
\]
A function $u\in \K_{\psi,g}(\Om)$  
is a \emph{solution of the $\K_{\psi,g}(\Om)$-obstacle problem} if 
\[
\E(u,v-u) \ge 0 \quad \text{for all $v\in \K_{\psi,g}(\Om)$.}
\]
\end{deff}

It follows directly from the definition that a solution of the 
$\K_{\psi,g}(\Om)$-obstacle
problem is a supersolution of $\LL u = 0$ in $\Om$, 
and moreover that it is a solution of $\LL u = 0$ if   $\psi \equiv -\infty$,
i.e.\ for the Dirichlet problem.

The obstacle problem was studied in detail by 
Korvenp\"a\"a--Kuusi--Palatucci~\cite{KKP16}.
However, here we follow Kim--Lee~\cite{KL} and use a somewhat
different definition of the obstacle problem that is more suitable for
our purposes.

In this paper it will also be important to consider
lsc-regularizations which we define in the following way.
For any measurable function $u:\R^n \to \eR$ we define its
\emph{lsc-regularization} in $\Om$ as
\begin{equation}   \label{eq-def-uhat}
\uhat(x) :=   \begin{cases}
   u(x), & \text{if } x \in \Omc, \\
\displaystyle\essliminf_{y\to x} u(y), & \text{if } x \in \Om.
\end{cases}
\end{equation}
We also say that $u$ is \emph{lsc-regularized} (in $\Om$) if $u \equiv \uhat$.

\begin{thm} \label{thm-obst-KL-solv}
\textup{(Kim--Lee~\cite[Theorem~4.9]{KL} and 
Korvenp\"a\"a--Kuusi--Palatucci~\cite[Theorem~9]{KKP17})}
Let $g\in \Vsp(\Om)$ and $\psi:\Om\to\eR$.
If  $\K_{\psi,g}(\Om) \ne \emptyset$,
then the $\K_{\psi,g}(\Om)$-obstacle problem is solvable,
and the solution $u$ is unique up to measure zero. 

Moreover, $u$ is a supersolution, $\uhat=u$ a.e., and 
$\uhat$ is the unique lsc-regularized solution of the
$\K_{\psi,g}(\Om)$-obstacle problem.
\end{thm}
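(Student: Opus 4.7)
The plan is to prove the solvability via the direct method of calculus of variations. Fix $g \in \Vsp(\Om)$ and define the energy functional
\[
J(u) = \frac{1}{p}\iint_{(\R^n\times \R^n)\setm(\Omc\times\Omc)} |u(x)-u(y)|^p k(x,y)\,dy\,dx.
\]
The domain $(\Omc\times\Omc)$ is excluded because every $u\in \K_{\psi,g}(\Om)$ agrees with $g$ on $\Omc$, making that contribution a constant; this also ensures $J(u)<\infty$ whenever $u-g\in\Vspo(\Om)$, by \eqref{eq-comp-(x,y)} and the definition of $\Vsp(\Om)$. I would then take a minimizing sequence $u_j\in \K_{\psi,g}(\Om)$ for $J$. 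Using a fractional Poincar\'e-type estimate on $\Vspo(\Om)$ (available since $\Om$ is bounded) together with the ellipticity \eqref{eq-comp-(x,y)}, the sequence $\{u_j-g\}$ is bounded in $\Vspo(\Om)$, so, after passing to a subsequence, $u_j-g\rightharpoonup w$ weakly in $\Vspo(\Om)$ and $u_j\to u:=g+w$ pointwise a.e.\ in $\R^n$.

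Next I would verify that $u\in \K_{\psi,g}(\Om)$: the inequality $u\ge\psi$ a.e.\ passes to the a.e.\ limit, and $u-g=w\in \Vspo(\Om)$ by construction. The functional $J$ is convex (since $t\mapsto |t|^p$ is) and strongly continuous on $\Vsp(\Om)$, hence weakly lower semicontinuous, giving $J(u)\le\liminf_j J(u_j)=\inf_{\K_{\psi,g}(\Om)} J$. Thus $u$ minimizes $J$ over the convex set $\K_{\psi,g}(\Om)$. A standard convexity argument (consider $u+t(v-u)$ with $t\in(0,1]$ and let $t\to 0^{+}$) then shows that $u$ satisfies the variational inequality $\E(u,v-u)\ge 0$ for every $v\in \K_{\psi,g}(\Om)$, i.e.\ $u$ is a solution of the obstacle problem. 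Uniqueness up to measure zero follows from the strict convexity of $J$ along the affine segment joining two putative solutions, combined with the Poincar\'e inequality on $\Vspo(\Om)$.

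To see that $u$ is a supersolution of $\LL u=0$ in $\Om$, pick any $0\le\phi\in\Vspo(\Om)$ (which by Lemma~\ref{lem-Vspo-limplus} may be approximated by nonnegative test functions) and set $v=u+\phi$. Since $v\ge u\ge\psi$ a.e.\ in $\Om$ and $v-g=(u-g)+\phi\in\Vspo(\Om)$, we have $v\in\K_{\psi,g}(\Om)$, and the variational inequality yields $\E(u,\phi)=\E(u,v-u)\ge 0$. By Proposition~\ref{prop-soln-Vsp}, $u$ is a supersolution.

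The remaining assertions concern the lsc-regularization $\uhat$. The standard argument is to invoke the (weak) Harnack-type estimates and the Caccioppoli inequality for supersolutions, which ensure that every supersolution has a pointwise a.e.\ representative given by its essential liminf, and this representative is lower semicontinuous and itself a supersolution. Applying this to $u$ gives $\uhat=u$ a.e.\ in $\Om$ (and by definition $\uhat=u=g$ on $\Omc$), so $\uhat\in \K_{\psi,g}(\Om)$ and $\uhat$ is again a solution of the obstacle problem. Uniqueness of the lsc-regularized solution then follows because any two lsc-regularized solutions are equal a.e.\ by the uniqueness above and hence, by lower semicontinuity and the essential liminf formula \eqref{eq-def-uhat}, equal pointwise in $\Om$. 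The main technical obstacle is the transition from a.e.\ uniqueness to pointwise uniqueness of the lsc-regularization, which relies on the nontrivial fact that supersolutions of $\LL u=0$ admit an lsc representative with the expected potential-theoretic properties; this is precisely the content invoked from Kim--Lee~\cite{KL} and Korvenp\"a\"a--Kuusi--Palatucci~\cite{KKP17}.
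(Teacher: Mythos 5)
Your proposal is correct but takes a genuinely different route from the paper. The paper's proof of Theorem~\ref{thm-obst-KL-solv} is essentially a citation: existence, a.e.-uniqueness, and the supersolution property are imported wholesale from Kim--Lee \cite[Theorem~4.9]{KL}, and then \cite[Theorem~9]{KKP17} is invoked to get $\uhat=u$ a.e., with the pointwise uniqueness of the lsc-regularized solution a trivial consequence. You instead reprove the Kim--Lee part from scratch via the direct method of the calculus of variations: minimize the natural energy $J$ over the convex set $\K_{\psi,g}(\Om)$, use ellipticity~\eqref{eq-comp-(x,y)} and the Poincar\'e inequality~\eqref{eq-PI} for coercivity, weak lower semicontinuity from convexity, and strict convexity for a.e.-uniqueness; the variational inequality and the supersolution property then follow by testing with $v=u+\phi$ and appealing to Proposition~\ref{prop-soln-Vsp}. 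For the lsc-regularization step you reduce, like the paper, to the cited fact that every supersolution agrees a.e.\ with a lower semicontinuous (essential liminf) representative. Your approach is more self-contained and makes visible what the black box contains; the paper's approach is shorter and leans on the literature. Two small points worth tightening if you write this up: (i) the passage from weak convergence in $\Vspo(\Om)$ to a.e.\ pointwise convergence uses the compact embedding $\Vspo(\Om)\hookrightarrow L^p(\Om)$ on the bounded set $\Om$, which you should state explicitly (outside $\Om$ it is trivial since all $u_j\equiv g$); and (ii) weak lower semicontinuity of $J$ needs convexity plus \emph{strong lower semicontinuity} (e.g.\ via Fatou), which is weaker and cleaner to verify than the strong continuity you invoke, and should be considered on the translated space $g+\Vspo(\Om)$ rather than on all of $\Vsp(\Om)$.
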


Note that the lsc-regularization is only within $\Om$.

\begin{proof}
The existence and a.e.-uniqueness, and that $u$ is a supersolution was
shown by Kim--Lee~\cite[Theorem~4.9]{KL}.
Since $u$ is a supersolution, it follows from \cite[Theorem~9]{KKP17}
that $\uhat=u$ a.e.
Thus $\uhat$ is  also a solution
of the $\K_{\psi,g}(\Om)$-obstacle problem and its 
pointwise uniqueness follows immediately from the 
a.e.-uniqueness of $u$.
\end{proof}

For functions $g \in \VspOm$ we can solve
the Dirichlet problem in the Sobolev sense. 
To make it precise we define $Hg$ as follows.

\begin{thm} 
Let 
$g \in \VspOm$.
Then there is a unique function $Hg=H_\Om g:\R^n \to \eR$ with the following properties\/\textup{:}
\begin{enumerate}
\item
  $Hg$ is $\LL$-harmonic in $\Om$,
\item
  $Hg-g \in \VspoOm$, and in particular
  $Hg \equiv g$ outside $\Om$.
\end{enumerate}
\end{thm}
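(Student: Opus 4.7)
The plan is to realize $Hg$ as the continuous representative of the solution to the trivial obstacle problem with $\psi \equiv -\infty$. Since $g \in \K_{-\infty,g}(\Om)$, the admissible class is nonempty, and Theorem~\ref{thm-obst-KL-solv} produces $u \in \K_{-\infty,g}(\Om)$ satisfying $\E(u,v-u) \ge 0$ for every $v \in \K_{-\infty,g}(\Om)$. Because no pointwise obstacle constrains the admissible class, for any such $v$ the reflection $w := 2u - v$ also lies in $\K_{-\infty,g}(\Om)$ (indeed $w - g = 2(u-g) - (v-g) \in \VspoOm$); testing the variational inequality with $w$ gives the opposite inequality $\E(u,v-u) \le 0$, and hence $\E(u,v-u) = 0$. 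Taking $v = u + \phi$ with $\phi \in \VspoOm$ shows that $\E(u,\phi) = 0$ for all $\phi \in \VspoOm$, so by Proposition~\ref{prop-soln-Vsp} the function $u$ is a weak solution of $\LL u = 0$ in $\Om$.

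Next I would invoke the interior H\"older continuity of weak solutions from Di Castro--Kuusi--Palatucci~\cite{DCKP16} to obtain a continuous representative $\ut$ of $u$ in $\Om$, and define
\[
Hg(x) = \begin{cases} \ut(x), & x \in \Om, \\ g(x), & x \in \Omc. \end{cases}
\]
Then $Hg$ is continuous, hence $\LL$-harmonic, in $\Om$, and $Hg \equiv g$ outside $\Om$. Since $Hg$ and $u$ differ only on a null subset of $\Om$, the difference $Hg - g$ agrees a.e.\ in $\Rn$ with $u - g \in \VspoOm$ and vanishes identically in $\Omc$; because the $\VspOm$-seminorm is unaffected by modifications on null sets, $Hg - g \in \VspoOm$ as required.

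For uniqueness, suppose $u_1, u_2$ both satisfy (a) and (b). Each belongs to $\K_{-\infty,g}(\Om)$, and being an $\LL$-harmonic function each satisfies $\E(u_i,\phi) = 0$ for every $\phi \in \VspoOm$ by Proposition~\ref{prop-soln-Vsp}; hence each is a solution of the $\K_{-\infty,g}(\Om)$-obstacle problem. The a.e.-uniqueness part of Theorem~\ref{thm-obst-KL-solv} yields $u_1 = u_2$ a.e.\ in $\Rn$, and continuity in $\Om$ upgrades this to pointwise equality there, while outside $\Om$ both equal $g$. The only genuine subtlety is the passage from the a.e.-defined solution of the variational problem to the pointwise continuous representative required by the definition of $\LL$-harmonicity, for which~\cite{DCKP16} does the heavy lifting.
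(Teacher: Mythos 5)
Your proposal is correct and follows the same strategy as the paper: realize $Hg$ as the solution of the trivial obstacle problem $\K_{-\infty,g}(\Om)$ supplied by Theorem~\ref{thm-obst-KL-solv}, note that such a solution is in fact a weak solution of $\LL u = 0$ (you spell out the standard reflection argument $w = 2u - v$, which the paper leaves as a remark directly after the definition of the obstacle problem), and then redefine on a null set inside $\Om$ to obtain the continuous ($\LL$-harmonic) representative via the interior regularity of \cite{DCKP16}, with uniqueness coming from the a.e.\ uniqueness of the obstacle-problem solution. The only point worth noting for the record is that in verifying uniqueness one must observe that $u_i = (u_i - g) + g \in \Vsp(\Om)$ before applying Proposition~\ref{prop-soln-Vsp}; you have all the pieces and the logic is sound.
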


\begin{proof}
This essentially follows from Theorem~\ref{thm-obst-KL-solv}: 
There is a solution $u$ of the $\K_{-\infty,g}(\Om)$-obstacle problem which is unique
up to measure zero.
At the same time,
$u$ is a solution of $\LL u =0$ in $\Om$ and thus, after redefinition
on a set of measure zero within $\Om$, can be required to be $\LL$-harmonic in $\Om$.
\end{proof}

We will need several comparison principles in this paper.
The following result is  for obstacle problems.

\begin{thm}\label{thm-comp-obs} 
\textup{(Comparison principle for obstacle problems)}
Let $u_i$ be a solution of the $\K_{\psi_i, g_i}(\Om)$-obstacle problem for $i=1,2$. 
If $\psi_1 \le \psi_2$ a.e.\ in $\Om$ and 
$(g_1-g_2)_\limplus \in \Vsp_0(\Om)$,
then $u_1 \le u_2$ a.e.\ in $\Om$.
\end{thm}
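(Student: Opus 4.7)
The plan is to use the standard test-function swap from obstacle theory. I set $w := (u_1 - u_2)_\limplus$ and would use $v_1 := u_1 - w = \min\{u_1, u_2\}$ as a competitor for the $\K_{\psi_1,g_1}(\Om)$-obstacle problem, and $v_2 := u_2 + w = \max\{u_1, u_2\}$ as a competitor for the $\K_{\psi_2,g_2}(\Om)$-obstacle problem. The obstacle inequalities are immediate from $\psi_1 \le \psi_2$: $\min\{u_1,u_2\} \ge \psi_1$ a.e.\ (since $u_2 \ge \psi_2 \ge \psi_1$ a.e.) and $\max\{u_1,u_2\} \ge u_2 \ge \psi_2$ a.e. Using linearity of $\mathcal{E}(u,\cdot)$ in the second argument, the two obstacle inequalities give $\mathcal{E}(u_1, w) \le 0 \le \mathcal{E}(u_2, w)$, hence $\mathcal{E}(u_2, w) - \mathcal{E}(u_1, w) \ge 0$.

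The main technical point will be to verify $w \in \Vsp_0(\Om)$, which is needed for the admissibility of $v_1-g_1 = (u_1 - g_1) - w$ and $v_2 - g_2 = (u_2-g_2) + w$ in $\Vsp_0(\Om)$. This is where the hypothesis $(g_1-g_2)_\limplus \in \Vsp_0(\Om)$ enters. I would write
\[
u_1 - u_2 = (u_1 - g_1) - (u_2 - g_2) + (g_1 - g_2) \le (u_1-g_1) - (u_2-g_2) + (g_1-g_2)_\limplus =: \Psi
\]
a.e., so that $0 \le w \le \Psi_\limplus$ a.e. Since $\Psi \in \Vsp_0(\Om)$ as a sum of three elements in $\Vsp_0(\Om)$, Lemma~\ref{lem-Vspo-limplus} gives $\Psi_\limplus \in \Vsp_0(\Om)$; as $w \in \VspOm$ (truncation preserves the $\Vsp$-seminorm), Corollary~\ref{lem-police} then yields $w \in \Vsp_0(\Om)$.

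To exploit $\mathcal{E}(u_2, w) - \mathcal{E}(u_1, w) \ge 0$, I would perform a pointwise case analysis of its integrand on $\R^n \times \R^n$. With $A := \{u_1 > u_2\}$, $\Phi(t) := |t|^{p-2}t$ (which is strictly increasing for $1<p<\infty$), $\Delta_i(x,y) := u_i(x) - u_i(y)$ and $\Delta w(x,y) := w(x) - w(y)$: on $A\times A$ one has $\Delta w = \Delta_1 - \Delta_2$ and the integrand equals $-(\Phi(\Delta_1)-\Phi(\Delta_2))(\Delta_1-\Delta_2) \le 0$; on $A \times A^c$ one has $\Delta w = w(x) > 0$ and $\Delta_1 > \Delta_2$, so $\Phi(\Delta_2) - \Phi(\Delta_1) < 0$ gives a strictly negative contribution; the case $A^c \times A$ is symmetric via the symmetry of $k$; and on $A^c \times A^c$ the integrand vanishes. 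Combined with the ellipticity lower bound in~\eqref{eq-comp-(x,y)} and the inequality $\mathcal{E}(u_2,w) - \mathcal{E}(u_1,w) \ge 0$, this forces $|A \times A^c| = 0$ in Lebesgue measure, so by Fubini either $|A| = 0$ or $|A^c| = 0$.

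Finally, I would rule out $|A^c| = 0$ by boundary considerations: since $u_i - g_i \in \Vsp_0(\Om)$ we have $u_i \equiv g_i$ in $\Omc$, and $(g_1-g_2)_\limplus \in \Vsp_0(\Om)$ forces $g_1 \le g_2$ a.e.\ in $\Omc$. Hence $A \subseteq \Om$ up to a null set, and since $\Om$ is bounded, $|A^c| = \infty > 0$. Therefore $|A| = 0$, which is exactly $u_1 \le u_2$ a.e., as required.
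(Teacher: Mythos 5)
Your argument is correct and follows essentially the same route as the paper: the key inequality $\E(u_2,w)-\E(u_1,w)\ge0$ for $w=(u_1-u_2)_\limplus$ and the subsequent sign analysis of the integrand over $A\times A$, $A\times A^c$, $A^c\times A$, $A^c\times A^c$ (with $A=\{u_1>u_2\}$), together with ruling out $|A^c|=0$ because $A^c\supset\Omc$ and $\Om$ is bounded, is precisely the computation in the auxiliary lemma the paper proves just before the theorem. The only cosmetic difference is that the paper obtains $\E(u_2,w)\ge0$ by viewing $u_2$ as a supersolution tested against $0\le w\in\Vspo(\Om)$, whereas you test the $\K_{\psi_2,g_2}(\Om)$-obstacle problem directly with the competitor $\max\{u_1,u_2\}$; you also usefully spell out, via the bound $0\le w\le\Psi_\limplus$ and Corollary~\ref{lem-police}, the verification $w\in\Vspo(\Om)$ that the paper leaves implicit when checking $\min\{u_1,u_2\}\in\K_{\psi_1,g_1}(\Om)$.
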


Theorem~\ref{thm-comp-obs} is a direct consequence of the following lemma
and the fact that solutions of the obstacle problem are 
supersolutions of $\LL u=0$.

\begin{lem}   \label{lem-comp-first}
Let $u$ be a solution of the $\K_{\psi,g}(\Om)$-obstacle problem. 
If $v \in \Vsp(\Om)$ is a supersolution in $\Om$ such that 
$\min\{u,v\} \in \K_{\psi,g}(\Om)$, then $v \geq u$ a.e.\ in $\Om$.
\end{lem}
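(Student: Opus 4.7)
The plan is to test both the obstacle inequality for $u$ and the supersolution inequality for $v$ with the same nonnegative function $\phi := (u-v)_\limplus = u - \min\{u,v\}$, and then exploit the strict monotonicity of the map $t \mapsto |t|^{p-2}t$.

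First I would verify that $\phi \in \Vspo(\Om)$ is an admissible test function. Since both $u$ and $\min\{u,v\}$ belong to $\K_{\psi,g}(\Om)$, the differences $u-g$ and $\min\{u,v\}-g$ lie in the vector space $\Vspo(\Om)$, and hence so does their difference $\phi$. Taking $\min\{u,v\} \in \K_{\psi,g}(\Om)$ as competitor in the obstacle problem and using linearity of $\E$ in the second argument gives $\E(u,\phi) \le 0$, while Proposition~\ref{prop-soln-Vsp} applied to the supersolution $v \in \Vsp(\Om)$ with nonnegative test function $\phi$ yields $\E(v,\phi) \ge 0$. Subtracting,
\[
\E(u,\phi)-\E(v,\phi)\le 0.
\]

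Next I would show that the integrand of this difference is pointwise nonnegative. Writing $a = u(x)-u(y)$ and $b = v(x)-v(y)$, and splitting $\R^n$ into $A = \{u>v\}$ and $B = \{u \le v\}$: on $B \times B$ we have $\phi(x)-\phi(y) = 0$; on $A \times A$ we have $\phi(x)-\phi(y) = a-b$, so the integrand equals $(|a|^{p-2}a - |b|^{p-2}b)(a-b)k(x,y) \ge 0$ by monotonicity of $t \mapsto |t|^{p-2}t$; on $A \times B$ we have $\phi(x)-\phi(y) = u(x)-v(x) > 0$ while $a - b = (u(x)-v(x)) - (u(y)-v(y)) \ge u(x)-v(x) > 0$ since $u(y) \le v(y)$, so both factors are strictly positive; and $B \times A$ is symmetric.

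Combining pointwise nonnegativity with the integral inequality forces the integrand to vanish a.e. Since on $A \times B$ it is strictly positive wherever $k > 0$, Fubini yields $|A|\,|B| = 0$. Now $\min\{u,v\}-g \in \Vspo(\Om)$ implies $\min\{u,v\} \equiv g \equiv u$ on $\Omc$, so $v \ge u$ on $\Omc$, i.e.\ $B \supset \Omc$. As $\Om$ is bounded, $|\Omc| = \infty$, which forces $|A| = 0$, and so $v \ge u$ a.e.\ in $\R^n$, in particular in $\Om$. The main delicate point is the mixed case $A \times B$: even though $\phi(y) = 0$, the sign of $u(y)-v(y) \le 0$ still produces a strict inequality $a > b$, which is exactly what is needed for the monotonicity of $t \mapsto |t|^{p-2}t$ to give strict positivity and close the argument.
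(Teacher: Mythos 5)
Your proof is correct and follows essentially the same approach as the paper: you test both the obstacle inequality for $u$ and the supersolution inequality for $v$ with the same nonnegative function $\phi = u - \min\{u,v\} \in \Vspo(\Om)$, split $\R^n\times\R^n$ by the sets $\{u>v\}$ and $\{u\le v\}$, exploit the strict monotonicity of $t\mapsto|t|^{p-2}t$ on the cross terms, and then use $\Omc\subset\{u\le v\}$ with $|\Omc|>0$ to rule out $|\{u>v\}|>0$. The only cosmetic difference is that you work with $\E(u,\phi)-\E(v,\phi)\le 0$ and show the integrand is pointwise nonnegative, while the paper works with $\E(v,w)-\E(u,w)\ge 0$ and shows the corresponding pieces $I_1,I_2,I_3$ are nonpositive — a sign flip, not a different argument.
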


\begin{proof}
Since $v$ is a supersolution and
$w:=u-\min\{u,v\} \in \Vsp_0(\Om)$ is nonnegative, it follows from 
Proposition~\ref{prop-soln-Vsp} that $\E(v,w)\ge0$.
Moreover, $u-w=\min\{u,v\} \in \K_{\psi,g}(\Om)$
and thus $-\E(u,w)=\E(u,(u-w)-u) \ge 0$, by
the definition
of the $\K_{\psi,g}(\Om)$-obstacle problem.
Hence
\begin{align*}
0 &\leq \mathcal{E}(v, w) - \mathcal{E}(u, w) \\
&= \int_{\{u>v\}} \int_{\{u>v\}} (\Phi(v(x)-v(y))-\Phi(u(x)-u(y)))(w(x)-w(y)) k(x, y) \,dy\,dx \\
&\quad+ \int_{\{u>v\}} \int_{\{u\leq v\}} (\Phi(v(x)-v(y))-\Phi(u(x)-u(y)))w(x) k(x, y) \,dy\,dx \\
&\quad+ \int_{\{u\leq v\}} \int_{\{u>v\}} (\Phi(v(x)-v(y))-\Phi(u(x)-u(y)))(-w(y)) k(x, y) \,dy\,dx \\
&=: I_1+I_2+I_3,
\end{align*}
where $\Phi(t):=|t|^{p-2}t$ is a strictly increasing function. 
If $u(x)>v(x)$ and $u(y) \leq v(y)$, then $v(x)-v(y) < u(x)-u(y)$ 
and hence 
\begin{equation}  \label{eq-nonpos-I2}
\Phi(v(x)-v(y)) < \Phi(u(x)-u(y)). 
\end{equation}
This leads to $I_2 \leq 0$, and similarly we have $I_3 \leq 0$. 
On the other hand, if $u(x)>v(x)$ and $u(y)>v(y)$, then
\begin{equation*}
(\Phi(v(x)-v(y))-\Phi(u(x)-u(y)))(w(x)-w(y)) \le 0
\end{equation*}
since $w(x)-w(y) = (u(x)-u(y)) - (v(x)-v(y))$ and
\begin{equation*}
(\Phi(a)-\Phi(b))(b-a) = (b-a)\int_b^a (p-1)|t|^{p-2} \,dt \le 0
\end{equation*}
whenever $a, b \in \R$.
Thus, we obtain $I_1 \le 0$, which leads us to $I_1=I_2=I_3=0$. 
In particular, since the integrand in $I_2$ is strictly negative
(by~\eqref{eq-nonpos-I2}), we conclude that
either $|\{x: u(x) > v(x) \}|=0$ (which gives the claim) or 
\[
0=|\{y: u(y) \le v(y) \}| = |\{y: w(y) =0 \}| \ge |\Om^c| > 0,
\]
which is impossible.
(Here $|\cdot|$ denotes the Lebesgue measure.)
\end{proof}

\begin{cor} \label{cor-comparison-sol}
\textup{(Comparison principle for $H$-solutions)}
Let $g_1, g_2 \in \Vsp(\Om)$ be such that 
$(g_1-g_2)_\limplus \in \Vsp_0(\Om)$.
Then $Hg_1 \le Hg_2$ in $\R^n$.
\end{cor}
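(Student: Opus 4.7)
The plan is to derive this as a direct consequence of the obstacle-problem comparison principle (Theorem~\ref{thm-comp-obs}), after separately handling the behavior inside and outside $\Om$.

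First, I would observe that by the definition of $Hg_i$ (and Theorem~\ref{thm-obst-KL-solv} with $\psi \equiv -\infty$), each $Hg_i$ is a solution of the $\K_{-\infty,g_i}(\Om)$-obstacle problem. Since the obstacles satisfy $-\infty \le -\infty$ trivially and the boundary data satisfy $(g_1-g_2)_\limplus \in \Vspo(\Om)$ by hypothesis, Theorem~\ref{thm-comp-obs} applies and yields $Hg_1 \le Hg_2$ a.e.\ in $\Om$. Because both $Hg_1$ and $Hg_2$ are $\LL$-harmonic in $\Om$, they are in particular continuous there, so the a.e.\ inequality upgrades to a pointwise inequality on all of $\Om$.

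Next, I would handle the complement $\Omc$. Recall from the convention established in Section~\ref{sec-sobolev} that any $v \in \Vspo(\Om)$ is required to be $\equiv 0$ outside $\Om$. Since $(g_1-g_2)_\limplus \in \Vspo(\Om)$, this forces $(g_1-g_2)_\limplus \equiv 0$ on $\Omc$, i.e.\ $g_1 \le g_2$ on $\Omc$. As $Hg_i \equiv g_i$ on $\Omc$ (by the defining property $Hg_i - g_i \in \Vspo(\Om)$), we obtain $Hg_1 \le Hg_2$ on $\Omc$ as well.

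Combining the two steps yields $Hg_1 \le Hg_2$ in all of $\R^n$. There is really no obstacle here; the only subtlety is to remember that the pointwise extension of the a.e.\ inequality inside $\Om$ is justified by the continuity of $\LL$-harmonic representatives, and that the complement comparison is a direct bookkeeping consequence of the standing convention $\Vspo(\Om) \subset \{v : v \equiv 0 \text{ on } \Omc\}$.
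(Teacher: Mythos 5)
Your proof is correct and follows essentially the same route as the paper's: apply the obstacle-problem comparison principle (Theorem~\ref{thm-comp-obs}) with $\psi_1=\psi_2\equiv-\infty$ to get the a.e.\ inequality in $\Om$, upgrade to a pointwise inequality by continuity of the $\LL$-harmonic representatives, and observe that $Hg_i = g_i$ on $\Omc$ with $g_1\le g_2$ there. The only difference is that you spell out why $(g_1-g_2)_\limplus\in\Vspo(\Om)$ forces $g_1\le g_2$ on $\Omc$ via the convention that $\Vspo(\Om)$-functions vanish outside $\Om$, which the paper leaves implicit.
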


\begin{proof}
By Theorem~\ref{thm-comp-obs} (with $\psi_1=\psi_2\equiv-\infty$), 
$Hg_1 \le H g_2$ a.e.\ in $\Om$.
Since $Hg_1$ and $H g_2$ are continuous  in $\Om$, 
the inequality holds everywhere in $\Om$.
Moreover, $Hg_1 =g_1 \le g_2 = Hg_2$ in $\Omc$.
\end{proof}

\section{Sobolev regularity}\label{sec-Sob-reg}

We follow Kim--Lee--Lee~\cite[Theorem~1.1]{KLL23} in defining
Sobolev regularity as follows.
(They called it regularity, but we want to distinguish it from Perron regularity,
which a priori could be a different property even though it will be shown to be equivalent
in Theorem~\ref{thm-main-reg-Perron}.)

\begin{deff}
  A boundary point $x_0 \in \bdy \Om$ is \emph{Sobolev regular} (with respect to $\LL$
and $\Om$)
  if 
  \[
    \lim_{\Om \ni x \to x_0} H g(x)=g(x_0)
    \quad \text{for every } g \in \VspOm \cap C(\R^n).
  \]  
  Otherwise we say that $x_0$ is \emph{Sobolev irregular}.
\end{deff}  

The main result in Kim--Lee--Lee~\cite{KLL23} is the Wiener
criterion in the following form. 
(The case $sp>n$
 was treated incorrectly in \cite{KLL23}. 
A correction was made later in Kim--Lee--Lee~\cite{KLL}, see the last paragraph of Remark~1.5 in \cite{KLL}.)

\begin{thm} \label{thm-Wiener}
\textup{(Wiener criterion \cite[Theorem~1.1]{KLL23})}
A boundary point $x_0 \in \bdy \Om$ is Sobolev regular if and only if 
\begin{equation} \label{eq-Wiener}
  \int_0^1 \biggl(
  \frac{\csp(\itoverline{B(x_0, \rho)}\setm \Om,B(x_0,2\rho))}{\rho^{n-sp}}
          \biggr)^{1/(p-1)} \, \frac{d\rho}{\rho} = \infty,
\end{equation}
where the condenser capacity $\csp$ is defined in Definition~\ref{deff-cpt} below.
\end{thm}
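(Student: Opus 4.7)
The plan is to establish the Wiener criterion in the two directions separately, following the nonlinear potential theory framework adapted to the fractional nonlocal setting. For sufficiency, the divergence of the Wiener integral should force the oscillation of $Hg$ at $x_0$ to decay; for necessity, convergence of the integral should allow one to exhibit boundary data for which $Hg$ fails to attain the right limit.

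For sufficiency, I would take $g\in\VspOm\cap C(\R^n)$ and aim to control $\operatorname{osc}(Hg, \Om\cap B(x_0,r))$ as $r\to0$. The core tool is a nonlocal Caccioppoli estimate on truncations $(Hg-k)_\limplus$ combined with a De~Giorgi-type logarithmic lemma for $\LL$-supersolutions, yielding an oscillation-decay inequality of the schematic form
\[
\operatorname{osc}_{\Om\cap B(x_0,r/2)} Hg \le \bigl(1 - c\, w(x_0,r)^{1/(p-1)}\bigr)\operatorname{osc}_{\Om\cap B(x_0,2r)} Hg + \text{tail}(r),
\]
where $w(x_0,r) = \csp(\itoverline{B(x_0,r)}\setm\Om, B(x_0,2r))/r^{n-sp}$ and the tail term is controlled via $g\in L^{p-1}_{sp}(\R^n)$. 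Iterating on a dyadic sequence $r_j=2^{-j}$ and taking logarithms, the elementary inequality $\log(1-t)\le -t$ converts the iterated product into the Wiener sum; its divergence drives the oscillation to zero. Since $Hg=g$ on $\Omc$ and $g$ is continuous, this forces $\lim_{\Om\ni x\to x_0}Hg(x)=g(x_0)$.

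For necessity, I would contrapose: assuming $\int_0^1 w(x_0,\rho)^{1/(p-1)}\,d\rho/\rho<\infty$, produce a datum $g\in\VspOm\cap C(\R^n)$ (e.g.\ a Lipschitz bump peaked at $x_0$, or the truncated distance $d_{x_0}$) whose $\LL$-harmonic extension fails to approach $g(x_0)$. The key ingredient is a pointwise lower bound for $\LL$-supersolutions in terms of a nonlocal Wolff-type potential built from capacities of $B(x_0,\rho)\setm\Om$, generalizing the Kilpel\"ainen--Mal\'y technique to kernels $k$ satisfying \eqref{eq-comp-(x,y)}. Comparing $Hg$ with the solution of an obstacle problem supported near $x_0$ (via the comparison principle, Theorem~\ref{thm-comp-obs}) then converts finiteness of the Wiener sum into a quantitative lower gap $\liminf_{\Om\ni x\to x_0}|Hg(x)-g(x_0)|>0$.

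The hardest step is the tail control throughout the iteration. Unlike the local \p-Laplace case, every Caccioppoli-type estimate on $B(x_0,r)$ produces integrals of $|Hg(y)|^{p-1}|x-y|^{-n-sp}$ over $\Rn\setm B(x_0,r)$ that must be reabsorbed, and the iteration has to be set up so that these tails remain summable rather than accumulating over the dyadic scales. A second nontrivial point is the precise identification of the geometric quantity inside the Wiener integral: one needs the Maz{\cprime}ya-type equivalence between $\csp$ and the variational capacity appearing naturally in the Caccioppoli/De~Giorgi iteration, which Section~\ref{sec-capacity} is meant to supply. Once the tail is tamed and the capacities are correctly matched, both directions reduce to the classical Wiener-type iteration scheme.
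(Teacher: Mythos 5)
This theorem is not proved in the paper at all: it is the main result of Kim--Lee--Lee~\cite[Theorem~1.1]{KLL23}, and the paper simply imports it (with a pointer to~\cite{KLL} for the correction of the case $sp>n$). So there is no ``paper's own proof'' to compare against --- the statement functions here as a cited black box and serves as the starting point for the boundary-regularity theory developed in Sections~\ref{sec-Sob-reg}--\ref{sec-perron-reg}.

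As a \emph{roadmap} your sketch does align with the strategy in~\cite{KLL23}: for sufficiency, oscillation decay of $Hg$ on dyadic balls via nonlocal Caccioppoli and De~Giorgi-type estimates, with the capacity density quantity $w(x_0,r)$ entering through a logarithmic estimate, and the iteration $\log(1-t)\le -t$ converting the product into the Wiener sum; for necessity, a nonlocal Wolff-potential lower bound for supersolutions in the spirit of Kilpel\"ainen--Mal\'y, used to show that some continuous datum is not attained when the Wiener integral converges. You also correctly single out the two genuinely hard points --- tail control through the dyadic iteration, and matching the variational capacity arising from the energy estimates with the condenser capacity $\csp$.

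That said, what you have written is an outline, not a proof. Each of the named ingredients (the logarithmic/De~Giorgi lemma for $\LL$-supersolutions, the tail estimate that remains summable over scales, and especially the nonlocal Wolff-potential estimate) is itself a substantial theorem requiring several pages; none is derived or even precisely stated in your proposal. Two specific caveats: (i) the phrase ``the tail term is controlled via $g\in L^{p-1}_{sp}(\Rn)$'' hides the real difficulty, since what must be controlled in the iteration is the tail of $Hg$ on shrinking balls, not merely the global tail of the datum; and (ii) your sketch does not address the case $sp>n$, where every point has positive capacity and every boundary point is regular --- a case where the naive iteration degenerates and which, as the paper notes, was initially treated incorrectly in~\cite{KLL23} and required the later correction in~\cite{KLL}. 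If you wish to reprove this theorem rather than cite it, those gaps would need to be filled in full; if the aim is simply to use it, the appropriate move is the one the paper makes, namely to cite~\cite{KLL23} and~\cite{KLL}.
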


Note that since \eqref{eq-Wiener} is independent of the kernel $k$, so 
is the Sobolev regularity of $x_0$.
It is easy to see that if $\Om$ satisfies the exterior cone condition or
the exterior corkscrew condition at $x_0$, then the 
Wiener integral~\eqref{eq-Wiener} diverges, and hence
$x_0$ is Sobolev regular.
Also the following important consequences follow immediately.

\begin{cor} \label{cor-local-subset}
Let $G$ be a bounded open set and let $x_0 \in \bdy G \cap \bdy \Om$.
\begin{enumerate}
\item
\textup{(Sobolev regularity is a local property)}
If $B \cap G = B \cap \Om$ for some  ball  $B \ni x_0$,
then $x_0$ is Sobolev regular for $G$ if and only it is Sobolev regular for $\Om$.
\item
If $G \subset \Om$ 
and  $x_0$ is Sobolev regular for $\Om$, then it is also Sobolev regular for~$G$.
\end{enumerate}
\end{cor}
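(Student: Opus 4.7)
The plan is to reduce both parts to the Wiener criterion (Theorem~\ref{thm-Wiener}) and then compare the integrands of \eqref{eq-Wiener} for $G$ and for $\Om$ on a neighbourhood of $\rho=0$. This works because convergence or divergence of the Wiener integral is only a matter of the behaviour of its integrand near the origin.

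For part~(a) I would first pick $\rho_0 > 0$ so small that $B(x_0, 2\rho_0) \subset B$. For every $\rho \in (0,\rho_0)$ the identity $B \cap G = B \cap \Om$ implies
\[
    \itoverline{B(x_0,\rho)} \setm G = \itoverline{B(x_0,\rho)} \setm \Om,
\]
and the outer ball $B(x_0,2\rho)$ of the condenser is the same in both cases. Hence the condenser capacities in \eqref{eq-Wiener} coincide for such $\rho$, the two Wiener integrals have the same divergence behaviour at $0$, and Theorem~\ref{thm-Wiener} applied to both $G$ and $\Om$ yields the claimed equivalence.

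For part~(b) the inclusion $G \subset \Om$ gives $\Omc \subset \Gc$, so that
\[
    \itoverline{B(x_0,\rho)} \setm \Om \subset \itoverline{B(x_0,\rho)} \setm G
    \quad \text{for every } \rho > 0.
\]
By the standard monotonicity of the condenser capacity $\csp$ in its first argument (a property that will follow directly from Definition~\ref{deff-cpt}), the integrand in the $\Om$-Wiener integral is pointwise dominated by the corresponding integrand for $G$. Thus divergence of the $\Om$-integral at $0$ forces divergence of the $G$-integral at $0$, and Theorem~\ref{thm-Wiener} gives Sobolev regularity of $x_0$ for $G$.

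The only real point of care is the monotonicity of $\csp$ in its first argument, which I would invoke from the forthcoming definition rather than reprove here. Neither part uses Sobolev solutions directly; the whole argument is a simple translation of the set-theoretic hypotheses into monotonicity/equality of the relevant capacities, combined with the Wiener criterion.
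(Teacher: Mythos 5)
Your proof is correct and follows exactly the route the paper intends: the corollary is stated right after the Wiener criterion with the remark that it ``follows immediately'', and the intended (unwritten) argument is precisely the comparison of the Wiener integrands near $\rho=0$ that you give, using that $\itoverline{B(x_0,\rho)}\setm G = \itoverline{B(x_0,\rho)}\setm\Om$ once $B(x_0,2\rho)\subset B$ for part~(a), and monotonicity of $\csp$ in the first argument for part~(b). The one small point worth making explicit is that the tail of the integral over $[\rho_0,1]$ is finite (since $\csp(\itoverline{B(x_0,\rho)}\setm\Om,B(x_0,2\rho))\lesssim \rho^{n-sp}$ there), so divergence is indeed a local matter at $\rho=0$, but this is standard and does not affect the correctness of your argument.
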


Before characterizing Sobolev regularity, we need to define the $\LL$-potential.

\begin{deff}  \label{def-pot}
Let $K \Subset \Om$ 
be compact and  
$\psi \in C^{\infty}_{c}(\Omega)$ be such that $\psi = 1$ on $K$.
Then the function $H_{\Om \setm K}\psi$ is called the \emph{$\LL$-potential}
 of $K$ in $\Omega$ and is denoted by $\mathfrak{R}(K, \Omega)$.
\end{deff}

The $\LL$-potential is independent of the choice of $\psi$
by Corollary~\ref{cor-comparison-sol},
and it
is a supersolution in $\Om$ by Lemma~2.16(iv) in Kim--Lee--Lee~\cite{KLL23}.
It therefore follows from 
Theorem~9 in
Korvenp\"a\"a--Kuusi--Palatucci~\cite{KKP17}
that its lsc-regularization
$\pothat(K, \Omega)$ within $\Om$ satisfies 
$\pothat(K, \Omega)=\pot(K, \Omega)$ a.e.,
and is thus also a supersolution in $\Om$. 
(Moreover, $\pothat(K, \Omega)$ is $\LL$-superharmonic in $\Omega$,
by Theorem~\ref{thm:KKP17}, but we will not use this fact.)

We are now ready to give our first characterizations of Sobolev regularity. 
For a ball $B=B(x_0,r):=\{x \in \Rn: |x-x_0| <r\}$, we let
$\la B=B(x_0, \la r)$.

\begin{thm}\label{thm-Sobolev-reg}
  Let $x_0 \in \bdy \Om$.
Then the following are equivalent\/\textup{:}
\begin{enumerate}
\item \label{a-unbdd}
  $x_0$ is Sobolev regular.
\item \label{a-reg}
\begin{equation*}
\lim_{\Om \ni x \to x_0} Hg(x) = g(x_0) \quad\text{for every bounded }
    g \in \VspOm \cap C(\Rn).
\end{equation*}
\item \label{a-contx0}
  \[
    \lim_{\Om \ni x \to x_0} Hg(x)=g(x_0)
  \]  
for every bounded $ g \in \VspOm$ 
that is continuous at $x_0$.
\item \label{a-d}
  \[
    \lim_{\Om \ni x \to x_0} Hd_{x_0}(x)=0, 
\quad \text{where }   d_{x_0}(x):=\min\{1,|x-x_0|\}.
  \]  
\item \label{a-potential}
\begin{equation*}
    \pothat(\clB \setminus \Omega, 2B)(x_0)=1
\quad \text{for every open ball } B \ni x_0.
\end{equation*}
\item \label{a-potential-seq}
There is a decreasing sequence of radii $r_j \searrow 0$ such that
\begin{equation*}
    \pothat(\itoverline{B(x_0,r_j)} \setminus \Omega, B(x_0,2r_j))(x_0)=1
\quad \text{for each } j.
\end{equation*}
\end{enumerate}  
\end{thm}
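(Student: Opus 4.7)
The plan is to prove the equivalences in two clusters: (a)--(d) via direct comparison with $d_{x_0}$, and (a)$\Leftrightarrow$(e)$\Leftrightarrow$(f) via the $\LL$-potential together with the locality of Sobolev regularity (Corollary~\ref{cor-local-subset}). The key algebraic ingredient for the comparison arguments is the identity $H(c+\lambda u) = c + \lambda Hu$ for $c\in\R$ and $\lambda\ge 0$, which follows from invariance of $\LL$ under additive constants and its positive $(p-1)$-homogeneity, by uniqueness of the Sobolev solution.

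For the first cluster, I would first verify that $d_{x_0}\in\VspOm\cap C(\Rn)$ is bounded by $1$ with $d_{x_0}(x_0)=0$; Sobolev finiteness is a routine Lipschitz-plus-bounded computation using $p(1-s)>0$ for the near-diagonal integral and $sp>0$ for the far field. Thus $d_{x_0}$ is admissible in each of (a), (b), (c), yielding (a), (b), (c)$\Rightarrow$(d) immediately. For the substantive direction (d)$\Rightarrow$(c), fix a bounded $g\in\VspOm$ continuous at $x_0$, $\eps>0$ and $\delta>0$ with $|g(x)-g(x_0)|<\eps$ on $B(x_0,\delta)$, and set $M=2\|g\|_{L^\infty(\Rn)}$, $\delta_1=\min(1,\delta)$. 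Case analysis inside and outside $B(x_0,\delta)$ shows the envelopes $\phi_\pm:=g(x_0)\pm\eps\pm(M/\delta_1)d_{x_0}$ satisfy $\phi_-\le g\le\phi_+$ pointwise on $\Rn$, so the difference $(g-\phi_+)_+=0\in\Vspo(\Om)$ and symmetrically. Corollary~\ref{cor-comparison-sol} combined with the scaling identity sandwiches
\[
g(x_0)-\eps-(M/\delta_1)Hd_{x_0}(x)\le Hg(x)\le g(x_0)+\eps+(M/\delta_1)Hd_{x_0}(x),
\]
and (d) followed by $\eps\to 0$ gives (c). Restriction to bounded $g\in C(\Rn)$ yields (d)$\Rightarrow$(b); the residual case of possibly unbounded $g\in\VspOm\cap C(\Rn)$ needed for (a) is recovered by truncating $g_k:=\min\{k,\max\{-k,g\}\}$ and exploiting $\Vsp$-stability of the Sobolev solution under boundary-data convergence.

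For the second cluster, (a)$\Rightarrow$(e) proceeds as follows. Fix $B\ni x_0$, set $K=\clB\setm\Om$, $G=2B$, and choose $\psi\in C_c^\infty(G)$ with $\psi\equiv 1$ on $K$, so that $\pot(K,G)=H_{G\setm K}\psi$. The identity $B\cap(G\setm K)=B\cap\Om$ (valid because $B\subset\clB$) combined with Corollary~\ref{cor-local-subset} transfers Sobolev regularity of $x_0$ from $\Om$ to $G\setm K$; under (a), therefore, $\pot(K,G)(y)\to\psi(x_0)=1$ as $y\to x_0$ from $G\setm K$. Coupled with $\pot\equiv 1$ on $K$ and $\pot\le 1$ globally by the maximum principle, the essential lower limit of $\pot$ at $x_0$ equals $1$, which is (e). Implication (e)$\Rightarrow$(f) is immediate.

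The main obstacle is (f)$\Rightarrow$(a), which I would attack by building a barrier from the potentials. Writing $B_j=B(x_0,r_j)$, $K_j=\clB_j\setm\Om$, and $v_j=1-\pot(K_j,2B_j)$, direct case analysis (distinguishing $K_j$, $2B_j\setm\clB_j$, and $\Om^c\setm 2B_j$) gives the pointwise estimate $d_{x_0}\le 2r_j+v_j$ on $\Om^c$. After verifying $(d_{x_0}-2r_j-v_j)_+\in\Vspo(\Om)$ via Lemmas~\ref{lem-Vsp0-simple} and~\ref{lem-police}, together with a continuous-truncation approximation of $v_j$ to circumvent its discontinuity across $\partial(2B_j)$, Corollary~\ref{cor-comparison-sol} combined with the scaling identity produces $Hd_{x_0}(x)\le 2r_j+Hv_j(x)$ in $\Om$. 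The hypothesis $\pothat(K_j,2B_j)(x_0)=1$ translates into $v_j(y)\to 0$ as $y\to x_0$ along the continuity set $2B_j\setm K_j$, which contains $\Om\cap 2B_j$. To transfer this into $\lim_{\Om\ni y\to x_0}Hv_j(y)=0$, I would use that at scales $\rho<r_j$ the sets $\clB(x_0,\rho)\setm\Om$ and $\clB(x_0,\rho)\setm(\Om\cap 2B_j)$ agree, so the Wiener integrands at $x_0$ for $\Om$ and $\Om\cap 2B_j$ coincide at small scales; Corollary~\ref{cor-local-subset} and Theorem~\ref{thm-Wiener} then propagate the limit to $Hv_j$. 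Taking $\limsup_{\Om\ni y\to x_0}$ and letting $j\to\infty$ yields (d), and (a) follows from the first cluster. The delicate point is precisely this last transfer step, where one leverages the locality of the Sobolev capacity at small scales to convert information about $v_j$ on the auxiliary domain $2B_j\setm K_j$ into boundary regularity for $\Om$ itself.
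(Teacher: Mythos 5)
Your proposal matches the paper's structure for (a)$\Rightarrow$(b)$\Rightarrow$(d), (c)$\Rightarrow$(b), (d)$\Rightarrow$(c), (a)$\Rightarrow$(e)$\Rightarrow$(f), and the envelope argument for (d)$\Rightarrow$(c) is sound. But there are two places where your argument has real gaps, the second of which is serious.

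First, to close the loop back to (a) (which allows unbounded $g$), you propose the symmetric truncation $g_k=\min\{k,\max\{-k,g\}\}$ together with ``$\Vsp$-stability of the Sobolev solution''. Neither piece works as stated: $g_k$ is not comparable to $g$ in a one-sided way (indeed $g_k>g$ on $\{g<-k\}$), so Corollary~\ref{cor-comparison-sol} does not apply; and since $\LL$ is nonlinear, $\Vsp$-convergence of boundary data does not yield pointwise control of $Hg_k$ near a boundary point. The fix is easy and \emph{does not appear in the paper}: use the one-sided truncations $g_k^+:=\min\{g,k\}\le g\le g_k^-:=\max\{g,-k\}$, so that $(g_k^+-g)_\limplus=0$ and $(g-g_k^-)_\limplus=0$ give $Hg_k^+\le Hg\le Hg_k^-$ via Corollary~\ref{cor-comparison-sol}. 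Since $g\in C(\Rn)$ is bounded on the compact $\itoverline{\Om}$, one has $g_k^\pm(x_0)=g(x_0)$ for large $k$, and the sandwich closes. The paper instead deduces (b)$\Rightarrow$(a) by contrapositive from the Wiener criterion (Theorem~\ref{thm-Wiener}), observing that the counterexample $u_\rho$ constructed in the necessity proof in \cite{KLL23} is already bounded; your one-sided-truncation route is a genuinely different and arguably more elementary argument once corrected.

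Second, and more importantly, your argument for (f)$\Rightarrow$(d) is circular in its final step. After obtaining $H_\Om d_{x_0}\le 2r_j+Hv_j$ (which is fine, and no ``continuous truncation'' of $v_j$ is needed since $u=\pot(\clB\setm\Om,2B)\in\Vspo(2B)\subset W^{s,p}(\Rn)$ so $v_j\in\VspOm$ and $(d_{x_0}-2r_j-v_j)_\limplus\equiv 0$), you want $\lim_{\Om\ni y\to x_0}Hv_j(y)=0$, i.e.\ $\liminf_{\Om\ni y\to x_0}Hu(y)=1$, and you propose to get it by noting that the Wiener integrands for $\Om$ and $\Om\cap 2B_j$ agree at small scales and invoking Theorem~\ref{thm-Wiener}. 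But that would require knowing the Wiener integral at $x_0$ for $\Om$ diverges — which is exactly Sobolev regularity, the very thing you are trying to prove. The paper circumvents this with a \emph{pointwise} comparison that never appeals to regularity: $Hu$ and $u$ are both solutions in $\Om\cap 2B$, and $(u-H_\Om u)_\limplus\in\Vsp_0(\Om\cap 2B)$ by combining $0\le(u-H_\Om u)_\limplus\le u_\limplus\in\Vsp_0(2B)$, $(u-H_\Om u)_\limplus\in\Vsp_0(\Om)$ (Lemma~\ref{lem-Vspo-limplus}), and Lemma~\ref{lem-Vsp0-simple}; Corollary~\ref{cor-comparison-sol} then gives $H_\Om u\ge u$ in $\Om\cap 2B$, and (f) together with \eqref{eq-pot} yields $\liminf_{\Om\ni x\to x_0}H_\Om u(x)\ge\liminf_{\Om\ni x\to x_0}u(x)=1$, so $\limsup_{\Om\ni x\to x_0}H_\Om d_{x_0}(x)\le 2r_j\to 0$. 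You need to replace the Wiener-transfer step with this (or an equivalent) direct comparison argument.
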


Note that 
\begin{equation} \label{eq-pot}
\pothat(\clB \setminus \Omega, 2B)(x_0)
=\liminf_{\Om\ni x\to x_0} \pot(\clB \setminus \Omega, 2B) (x),
\end{equation}
since the potential $\pot(\clB \setminus \Omega, 2B)$ is continuous in
the open set $2B \setm (\clB \setm \Om)$.

\begin{proof}[Proof of Theorem~\ref{thm-Sobolev-reg}]
\ref{a-unbdd}\imp\ref{a-reg}\imp\ref{a-d} and \ref{a-contx0}\imp\ref{a-reg}
Since $d_{x_0} \in \VspOm \cap C(\Rn)$, 
these implications are  trivial.

$\neg$\ref{a-unbdd}\imp$\neg$\ref{a-reg}
By Theorem~\ref{thm-Wiener}, the Wiener integral in~\eqref{eq-Wiener} converges.
The function $u_\rho$, with $\rho>0$ sufficiently small, constructed
in the proof of the necessity part of the Wiener criterion
in~\cite[end of Section~5]{KLL23}
is bounded. Hence this implication holds.

\ref{a-d}\imp\ref{a-contx0}
By adding a constant if necessary, we
may assume that $g(x_0)=0$. Let $\eps >0$.
Then there is $m>0$ such that 
$g \le md_{x_0}+\eps$ in $\R^n$ and thus
$(g-md_{x_0}-\eps)_\limplus \in V^{s, p}_0(\Om)$.
By the comparison principle (Corollary~\ref{cor-comparison-sol}), we see that
\[
  \limsup_{\Om \ni x \to x_0} Hg(x)
  \le     \lim_{\Om \ni x \to x_0} m H d_{x_0}(x)+ \eps=\eps.
\]
Letting $\eps \to 0$ shows that 
\[
  \limsup_{\Om \ni x \to x_0} Hg(x) \le 0.
\]
Applying this also to $-g$ shows that 
\[
  \lim_{\Om \ni x \to x_0} Hg(x) = 0.
\]

\ref{a-unbdd}\imp\ref{a-potential}
Let $B \ni x_0$ be an  open ball and 
$\psi \in C^{\infty}_{c}(2B)$ be such that $\psi = 1$ on 
$\clB \setm \Om$.
By Corollary~\ref{cor-local-subset}, $x_0$ is Sobolev regular also 
with respect to $G:=2B \setm (\clB \setm \Om)$.
Thus, by \eqref{eq-pot},
\[
   \pothat(\clB \setm \Om, 2B)(x_0)
=\liminf_{\Om\ni x\to x_0} \pot(\clB \setm \Om, 2B) (x)
=\liminf_{G\ni x\to x_0} H_G \psi (x)
=\psi(x_0)=1.
\]

\ref{a-potential}\imp\ref{a-potential-seq}
This is trivial.

\ref{a-potential-seq}\imp\ref{a-d}
Fix $j$ such that 
$0<r_j<1$. 
Let  $B=B(x_0,r_j)$
and $u =\pot(\clB\setm\Om,2B)$.
Since 
\[
d_{x_0} \le2r_j \le 1+2r_j-u \text{ in } 2B
\quad \text{and} \quad
d_{x_0}  \le 1\le 1+2r_j-u \text{ in } \R^n\setm 2B,
\] 
we have by the comparison principle (Corollary~\ref{cor-comparison-sol}) that
\begin{equation}   \label{eq-est-dx0-with-pot}
H_\Om d_{x_0} \le 1+2r_j -H_\Om u.
\end{equation}

We shall use the comparison principle (Corollary~\ref{cor-comparison-sol}) 
once more to show that
$H_\Om u \ge u$
in $\Om\cap 2B$, which will then conclude the proof.
Note that $H_\Om u$ and $u$ are solutions  in $\Om\cap 2B$ with boundary data
$H_\Om u$ and $u$, respectively.

To see that
$H_\Om u \ge u$ in $\Om\cap 2B$, 
it therefore suffices to show that
$(u-H_\Om u)_\limplus \in \Vsp_0(\Om\cap 2B)$.
This  follows from  Lemma~\ref{lem-Vsp0-simple}, since
\[
0\le (u-H_\Om u)_\limplus \le u_\limplus \in \Vsp_0(2B)
\quad \text{and} \quad
(u-H_\Om u)_\limplus \in \Vsp_0(\Om),
\]
by Lemma~\ref{lem-Vspo-limplus}.
Hence, by  \eqref{eq-est-dx0-with-pot}, \eqref{eq-pot} and~\ref{a-potential-seq},
\[
0 \le \limsup_{\Om\ni x\to x_0}  H_\Om d_{x_0} (x)
\le 1+ 2r_j -\liminf_{\Om\ni x\to x_0} u(x)  
= 2r_j.
\]
Letting $j\to\infty$ shows that \ref{a-d} holds.
\end{proof}

\section{Capacities}\label{sec-capacity}

In order to prove the Kellogg property (Theorem~\ref{thm-kellogg})
we first need to study capacities.
Two types of capacities are used in this paper:
the Sobolev capacity and
the condenser capacity.
In contrast to Kim--Lee--Lee~\cite{KLL23}
we will need capacities for noncompact sets,
e.g.\ in the Kellogg property (Theorem~\ref{thm-kellogg}).
We therefore make the following definitions.

\begin{deff} \label{deff-cpt}
The \emph{condenser capacity} of a compact set $K \Subset \Om$ is given by
\[
    \cpt(K,\Om) = \inf_u {[u]_{W^{s,p}(\R^n)}^p},
\]
where the infimum is taken over all  $u\in C_c^\infty(\Om)$
such that 
$u \ge 1$  on $K$.
\end{deff}

\begin{deff} 
The \emph{Sobolev capacity} of a compact set $K \Subset \Rn$ is defined by
\begin{equation}  \label{eq-def-norm-cap}
 \Cpt(K)   =\inf_u {\|u\|_{\Wsp(\Rn)}^p},
\end{equation}
where the infimum is taken over all $u\in C_c^\infty(\Rn)$ such that 
$u \ge 1$ on $K$.
\end{deff}

Both capacities are then extended first to open and then to arbitrary sets 
in the usual way as follows:
For open sets $G$,
\begin{equation}   \label{eq-cap-G}
\begin{aligned}
\cpt(G,\Om) & = \sup_{\substack{K \text{ compact}\\ K \Subset G}} \cpt(K,\Om), && \text{if $G \subset \Om$}, \\
\Cpt(G) &= \sup_{\substack{K \text{ compact}\\ K \Subset G}} \Cpt(K), && \text{if $G \subset \Rn$}.
\end{aligned}
\end{equation}
Similarly, for arbitrary  sets~$E$,
\begin{equation}  \label{eq-cap-E}
\begin{aligned}
\cpt(E,\Om) &= \inf_{\substack{G \text{ open}\\ E \subset G \subset \Om}} \cpt(G,\Om), && \text{if $E \subset \Om$}, \\
\Cpt(E) &= \inf_{\substack{G \text{ open}\\ E \subset G}} \Cpt(G), && \text{if $E \subset \Rn$}.
\end{aligned}
\end{equation}

It is worth noticing that~\eqref{eq-cap-G} and~\eqref{eq-cap-E} do not change the 
definition of capacity for compact sets, cf.\ Adams--Hedberg~\cite[Proposition~2.2.3]{AH}.
Indeed, let $u\in C_c^\infty(\Rn)$ be such that $u\ge1$ on a compact set~$K$.
For every $0<\eps<1$ and every compact $K'\subset G_\eps:=\{x: u(x)>1-\eps\}$, 
the function $u/(1-\eps)$ is admissible for $\Cpt(K')$.  
Since $G_\eps$ is open and $K\subset G_\eps$, we thus get 
by~\eqref{eq-def-norm-cap} and~\eqref{eq-cap-G} 
(without using \eqref{eq-cap-E} as a definition) that
\[
\Cpt(K) \le \inf_{\substack{G \text{ open}\\ K \Subset G}} \Cpt(G) \le \Cpt(G_\eps) 
= \sup_{\substack{K' \text{ compact}\\ K'\Subset G_\eps}} \Cpt(K')
\le \frac{\|u\|_{\Wsp(\Rn)}^p}{(1-\eps)^p}.
\]
Now letting $\eps\to0$ and taking the infimum over all such 
$u$ shows that \eqref{eq-cap-E} holds for $\Csp(K)$.
The argument for $\cpt(K,\Om)$ is the same.

Our definition of the condenser capacity coincides with the definitions in 
Kim--Lee--Lee~\cite[p.~1968]{KLL23} (for compact $K \Subset \Om$)
and~\cite[Definition~5.1]{KLL}.
For compact and open sets $E$ it also coincides 
with the capacity $\capp_{B^s_p}$
considered in Bj\"orn~\cite{JBWien}.
With our definition, we can now extend Lemma~1.3 in \cite{JBWien} 
to arbitrary sets in the following way.
We will use this result when proving the 
Kellogg property (Theorem~\ref{thm-kellogg}) in
Section~\ref{sec-kellogg}.

Here $A \simeq A'$ means that $A/C\le A'\le CA$ 
for some constant $C$ independent
of the quantities $A$ and $A'$.
In addition to balls in $\R^n$ we will also
use
balls in $\R^{n+1}$, which we denote by
\[
B'(z_0,r)=\{z \in \R^{n+1}: |z-z_0| <r\}.
\]

\begin{lem}\label{lem-cap-extended}
\textup{(\cite[Lemma~1.3]{JBWien})}
Let $E\subset  \itoverline{B(x_0,r)}$ and $z_0=(x_0,0)\in\R^{n+1}$.
Then 
\[
\cpt(E,B(x_0,2r)) \simeq \cpa(E\times\{0\}, B'(z_0,2r)),
\]
where $\cpa$ is the weighted \p-capacity in $\R^{n+1}$ associated with the weight
\[
  w(x,t)=|t|^a \quad \text{for }
  a=p(1-s)-1,
\]
as in  Heinonen--Kilpel\"ainen--Martio\/~\textup{\cite[Chapter~2]{HeKiMa}}.

\end{lem}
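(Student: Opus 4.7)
The plan is to reduce to the Borel-set version, namely \cite[Lemma~1.3]{JBWien}, and then extend to arbitrary $E$ via the outer-regularity built into both capacities. For $\cpt$ this outer regularity is part of the definition in~\eqref{eq-cap-E}, while for $\cpa$ it follows from the analogous two-step construction in \cite[Chapter~2]{HeKiMa}.

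First, I would observe that every open $G$ with $E \subset G \subset B(x_0,2r)$ is Borel, so \cite[Lemma~1.3]{JBWien} applies to $G$ and gives
\[
\cpt(G, B(x_0,2r)) \simeq \cpa(G \times \{0\}, B'(z_0,2r)),
\]
with implicit constants independent of $G$. Passing to the infimum over all such $G$ on both sides and invoking~\eqref{eq-cap-E} on the left then produces
\[
\cpt(E, B(x_0,2r)) \simeq \inf_G \cpa(G \times \{0\}, B'(z_0,2r)),
\]
where the infimum runs over open $G$ with $E \subset G \subset B(x_0,2r)$.

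The remaining task is to identify this infimum with $\cpa(E \times \{0\}, B'(z_0,2r))$. The direction $\ge$ is immediate from monotonicity of $\cpa$, since $E\times\{0\} \subset G\times\{0\}$. For the direction $\le$, given any open $U \subset B'(z_0,2r)$ containing $E \times \{0\}$, I would introduce the section
\[
G := \{x \in B(x_0,2r) : (x,0) \in U\},
\]
which is open in $B(x_0,2r)$ as the preimage of $U$ under the continuous map $x \mapsto (x,0)$, contains $E$, and satisfies $G \times \{0\} \subset U$. Monotonicity of $\cpa$ then gives $\cpa(G \times \{0\}, B'(z_0,2r)) \le \cpa(U, B'(z_0,2r))$; taking the infimum over such $U$ and invoking the outer-regular definition of $\cpa(E \times \{0\}, B'(z_0,2r))$ delivers the matching inequality.

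I do not foresee any real obstacle: the lemma is essentially a bookkeeping statement asserting that the two outer-regularity procedures (extending from compact to open to arbitrary sets, in $\R^n$ and in $\R^{n+1}$ respectively) commute with the embedding $E \mapsto E \times \{0\}$. The only step that requires a moment's thought is the interchange of infima, which works precisely because the section $G$ constructed above is automatically open in $B(x_0,2r)$ whenever $U$ is open in $B'(z_0,2r)$, and because the constants in \cite[Lemma~1.3]{JBWien} are uniform in the Borel set to which it is applied.
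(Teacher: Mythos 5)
Your proof is correct and follows essentially the same route as the paper: apply the Borel-set version of \cite[Lemma~1.3]{JBWien} to open $G\supset E$, take infima and use the outer-regular definitions \eqref{eq-cap-E} for $\cpt$ and the two-step definition from \cite[Chapter~2]{HeKiMa} for $\cpa$. The only difference is cosmetic -- you make explicit the section construction $G=\{x:(x,0)\in U\}$ that justifies interchanging the infima, a step the paper leaves implicit with ``this finally shows that''.
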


\begin{proof}
For open sets $E$,
this follows from \cite[Lemma~1.3]{JBWien}.
Let $E\subset \itoverline{B(x_0,r)}$ be arbitrary.
Then
\begin{align*}
\cpt(E,B(x_0,2r))
&= \inf_{G\supset E} \cpt(G,B(x_0,2r)) \\
&\simeq \inf_{G\supset E} \cpa(G\times \{0\},B'(z_0,2r)),
\end{align*}
where the infima are taken over all open sets $G\subset \Rn$ such that $E\subset G\subset B(x_0,2r)$.
At the same time, by the definition in~\cite[Chapter~2]{HeKiMa},
\[
\cpa(G\times \{0\},B'(z_0,2r)) = \inf_{U\supset G\times \{0\}} \cpa(U,B'(z_0,2r)),
\]
with the infimum taken over all open sets $U\subset \R^{n+1}$ such that 
$G\times \{0\}\subset U\subset B'(z_0,2r)$.
This finally shows that 
\begin{align*}
\cpt(E,B(x_0,2r))
&\simeq \inf_{U\supset E\times \{0\}} \cpa(U,B'(z_0,2r)) \\
&= \cpa(E\times\{0\},B'(z_0,2r)),
\end{align*}
where the infimum is taken over all open sets $U\subset \R^{n+1}$ such that 
$E\times \{0\}\subset U\subset B'(z_0,2r)$, 
and the definition in~\cite[Chapter~2]{HeKiMa}
was used for the last equality.
\end{proof}

The following result connects the two capacities $\cpt$ and $\Cpt$
and shows that they have the same zero sets.
For completeness and the reader's convenience we provide the proof.

\begin{prop} \label{prop-cp-Cp}
Let $E  \Subset \Om$.
Then  
\begin{equation} \label{eq-cp-Cp-Om}
 \frac{\Csp(E)}{C(1+(\diam\Om)^{sp})} \le
  \csp(E,\Om) \le C\biggl(1+\frac{1}{\dist(E,\Omc)^{p}}  \biggr) \Csp(E),
\end{equation}  
where $C$ only depends on $n$, $s$ and $p$.
In particular, $\Cpt(E)=0$ if and only if  $\cpt(E,\Om)=0$.
\end{prop}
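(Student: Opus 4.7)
The plan is to first prove both inequalities for compact sets $E=K$, and then extend to arbitrary $E\Subset\Om$ via the definitions~\eqref{eq-cap-G} and~\eqref{eq-cap-E}. For the lower bound, any $u\in C_c^\infty(\Om)$ admissible for $\csp(K,\Om)$ is also admissible for $\Csp(K)$, hence
\[
\Csp(K)\le \|u\|_{\Wsp(\Rn)}^p = \|u\|_{L^p(\Om)}^p + [u]_{\Wsp(\Rn)}^p.
\]
Since $u\equiv 0$ outside the bounded set $\Om$, the fractional Poincar\'e inequality for functions in $\Vspo(\Om)$ gives $\|u\|_{L^p(\Om)}^p \le C(\diam\Om)^{sp}[u]_{\Wsp(\Rn)}^p$, and taking the infimum over admissible $u$ yields the left inequality of~\eqref{eq-cp-Cp-Om} for compact $K$.

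For the upper bound, given $u\in C_c^\infty(\Rn)$ admissible for $\Csp(K)$, I choose a smooth cutoff $\eta\in C_c^\infty(\Om)$ with $0\le\eta\le 1$, $\eta=1$ in a neighborhood of $K$, and $|\nabla\eta|\le C/d$, where $d=\dist(K,\Om^c)$. Then $v=u\eta\in C_c^\infty(\Om)$ is admissible for $\csp(K,\Om)$. Using the Leibniz-type pointwise estimate
\[
|u(x)\eta(x)-u(y)\eta(y)|^p \le 2^{p-1}\bigl(|u(x)|^p|\eta(x)-\eta(y)|^p + |\eta(y)|^p|u(x)-u(y)|^p\bigr),
\]
the second term contributes at most $2^{p-1}[u]_{\Wsp(\Rn)}^p$ after integration. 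For the first, the key is to estimate $\omega_\eta(x):=\int_{\Rn} |\eta(x)-\eta(y)|^p/|x-y|^{n+sp}\,dy$ uniformly in $x$ by splitting the integral into $|x-y|<d$ (where the Lipschitz bound $|\eta(x)-\eta(y)|\le C|x-y|/d$ applies) and $|x-y|\ge d$ (where $|\eta(x)-\eta(y)|\le 2$); both pieces are of order $d^{-sp}$, so $\omega_\eta(x)\le Cd^{-sp}$. Combining yields $[v]_{\Wsp(\Rn)}^p \le C(1+d^{-sp})\|u\|_{\Wsp(\Rn)}^p$, and hence $\csp(K,\Om)\le C(1+d^{-sp})\Csp(K)$. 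Since $d^{-sp}\le 1+d^{-p}$ for all $d>0$ (considering $d\ge 1$ and $d<1$ separately), this implies the stated bound with $d^{-p}$. This cutoff computation, balancing the Lipschitz scale against the fractional singularity $|x-y|^{-n-sp}$, is the main technical obstacle.

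For the extension to arbitrary $E\Subset\Om$, both capacities satisfy $\csp(G,\Om)=\sup_{K\Subset G}\csp(K,\Om)$ and $\Csp(G)=\sup_{K\Subset G}\Csp(K)$ for open $G\Subset\Om$, and since $\dist(K,\Om^c)\ge\dist(G,\Om^c)$ the compact-set inequalities transfer directly to open $G$. For general $E$, let $d=\dist(E,\Om^c)>0$ and set $G_0:=\{x:\dist(x,E)<d/2\}\Subset\Om$, so $\dist(G_0,\Om^c)\ge d/2$. Restricting the infimum in~\eqref{eq-cap-E} to open $G$ with $E\subset G\subset G_0$, and invoking $\csp(G,\Om)\le C(1+(d/2)^{-sp})\Csp(G)$ followed by taking infima, gives the upper bound for $E$; the lower bound passes through analogously, now with $(\diam\Om)^{sp}$ in place of the distance factor. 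The zero-capacity equivalence is then immediate from the two-sided comparison.
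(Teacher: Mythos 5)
Your proof is correct and follows essentially the same route as the paper: the lower bound via the fractional Poincar\'e inequality for compactly supported functions, and the upper bound via a Lipschitz cutoff $\eta$ with the pointwise Leibniz estimate followed by a two-scale splitting of $\int_{\Rn}|\eta(x)-\eta(y)|^p|x-y|^{-n-sp}\,dy$, then the extension to general $E$ through the definitions \eqref{eq-cap-G} and \eqref{eq-cap-E}. The only deviation is that you split the $\omega_\eta$-integral at the scale $d=\dist(K,\Omc)$ rather than at scale $1$ as the paper does, which yields the slightly sharper intermediate bound $C(1+d^{-sp})$ before you convert it to the stated $C(1+d^{-p})$; the paper's fixed-scale split produces the $d^{-p}$ factor directly.
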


\begin{proof}
Let $u\in C_c^\infty(\Om)$. 
Then clearly, for any $z\in\Rn$ with $\dist(z,\Om)=2\diam\Om$, 
\begin{equation*}
\int_{\Om}|u(x)|^p  \,  dx
\le C (\diam\Om)^{sp} \int_{\Om}\int_{B(z,\diam\Om)} \frac{|u(x)-u(y)|^p}{|x-y|^{n+s p}}  \, dy\, dx,
\end{equation*}
which gives the fractional Poincar\'e inequality
\begin{equation} \label{eq-PI}
\|u\|_{W^{s,p}(\R^n)}^p \le C(1+(\diam\Om)^{sp})[u]_{W^{s,p}(\R^n)}^p.
\end{equation}
See Maz{\cprime}ya--Shaposhnikova~\cite{MS} and Ponce~\cite{Ponce} 
for much more general  Poincar\'e inequalities.
Taking the infimum over all $u\in C_c^\infty(\Om)$, which are admissible 
in the definition  of $\cpt(K,\Om)$ for compact $K \Subset \Om$, 
and then using \eqref{eq-cap-G}
and \eqref{eq-cap-E} 
proves the first inequality in~\eqref{eq-cp-Cp-Om} and
the ``if'' part in the last statement of the lemma. 

For the second inequality and the ``only if'' part, let $\eta\in C_c^\infty(\Om)$ be such that $0\le\eta\le1$
everywhere and $\eta=1$ in an open neighbourhood $V$ of $E$.
If $u\in C_c^\infty(\Rn)$ then $u\eta \in C_c^\infty(\Om)$ and 
\begin{equation}   \label{eq-u-eta}
|(u\eta)(x)- (u\eta)(y)|  \le |u(x)| |\eta(x)-\eta(y)|  + |u(x)-u(y)|.
\end{equation}
Note that 
\[
\int_{\Rn}  \frac{|\eta(x)-\eta(y)|^p}{|x-y|^{n+s p}}  \, dy
\le \int_{B(x,1)}  \frac{M^p\,dy}{|x-y|^{n+(s-1) p}} +
\int_{\Rn\setm B(x,1)}  \frac{dy}{|x-y|^{n+s p}} < \infty,
\]
where $M$ is the Lipschitz constant of $\eta$.
Since $\eta$ can be chosen so that $M\le 2/{\dist(E,\Om^c)}$,  we get
from~\eqref{eq-u-eta} that
\[
[u\eta]_{W^{s,p}(\R^n)}^p 
\le C (A \|u\|^p_{L^p(\Rn)} + [u]_{W^{s,p}(\R^n)}^p) 
       \le  CA \|u\|_{W^{s,p}(\R^n)}^p,
\]
where $A=1+1/{\dist(E,\Om^c)^{p}}$ and $C$ only depends on $n$, $s$ and $p$.
As before, taking the infimum over all $u\in C_c^\infty(\Rn)$, which are admissible 
in the definition  of $\Cpt(K)$ for compact $K \Subset V$, 
and then using \eqref{eq-cap-G}
and~\eqref{eq-cap-E} concludes the proof.
\end{proof}

\begin{lem} \label{lem-sp<=n}
Let $x_0 \in \Om$. 
Then the following are equivalent\/\textup:
\begin{enumerate}
\item
$sp \le n$,
\item
$\Csp(\{x_0\})=0$, 
\item
$\csp(\{x_0\},\Om)=0$.
\end{enumerate}
\end{lem}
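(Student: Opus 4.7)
The plan has three steps. First, the equivalence of (b) and (c) is immediate from Proposition~\ref{prop-cp-Cp}: for any bounded open $G\ni x_0$ the singleton $\{x_0\}$ is compactly contained in $G$, so both inequalities in~\eqref{eq-cp-Cp-Om} apply with $E=\{x_0\}$ and yield $\Csp(\{x_0\})=0\Leftrightarrow\csp(\{x_0\},G)=0$. In particular this simultaneously shows that (c) does not depend on the choice of $G$.

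For the contrapositive direction $\neg$(a)$\Rightarrow\neg$(b), I would assume $sp>n$ and invoke the fractional Morrey embedding $\Wsp(\Rn)\hookrightarrow C^{0,s-n/p}(\Rn)$ (see Di Nezza--Palatucci--Valdinoci~\cite{DNPV12}), which yields a pointwise bound $|u(x_0)|\le C\|u\|_{\Wsp(\Rn)}$ for any continuous representative. Since every admissible $u\in C_c^\infty(\Rn)$ in the definition of $\Csp(\{x_0\})$ satisfies $u(x_0)\ge1$, we obtain $\|u\|_{\Wsp(\Rn)}^p\ge C^{-p}$, and taking the infimum gives $\Csp(\{x_0\})>0$.

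The remaining direction (a)$\Rightarrow$(b) splits according to whether $sp<n$ or $sp=n$. In the subcritical case $sp<n$, I would fix $\phi\in C_c^\infty(B(0,2))$ with $\phi\equiv1$ on $B(0,1)$ and use the rescalings $\phi_\eps(x)=\phi((x-x_0)/\eps)$, which are admissible for $\Csp(\{x_0\})$. A direct change of variables gives
\[
\|\phi_\eps\|_{L^p(\Rn)}^p=\eps^n\|\phi\|_{L^p(\Rn)}^p \quad\text{and}\quad [\phi_\eps]_{\Wsp(\Rn)}^p=\eps^{n-sp}[\phi]_{\Wsp(\Rn)}^p,
\]
both of which tend to $0$ as $\eps\to0$, so $\Csp(\{x_0\})=0$.

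The borderline case $sp=n$ is the main obstacle, since $\eps^{n-sp}=1$ and the naive scaling argument fails. My preferred route is to bound $\csp(\{x_0\},B)$ for a single ball $B\ni x_0$ (which suffices thanks to the already established (c)$\Leftrightarrow$(b)) and apply Lemma~\ref{lem-cap-extended} to translate the question into the weighted $p$-capacity on $\R^{n+1}$ with weight $w(x,t)=|t|^{p(1-s)-1}$. This weight lies in the Muckenhoupt class $A_p$ on $\R^{n+1}$, and the theory developed in Heinonen--Kilpel\"ainen--Martio~\cite{HeKiMa} then forces singletons to have zero weighted $p$-capacity precisely when $sp\le n$. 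A more self-contained alternative is to test directly against logarithmic truncations $u_k(x)=\min\{1,\log_+(R/|x-x_0|)/\log(R/r_k)\}$ supported in $B(x_0,R)\Subset G$ with $r_k\to0$; these are Lipschitz and admissible for $\csp(\{x_0\},G)$, and one must estimate $[u_k]_{\Wsp(\Rn)}^p$ and show that it tends to $0$ as $r_k\to0$. I expect this direct estimate, which hinges on the precise cancellation between the logarithm and the fractional kernel at the critical exponent, to be the technical heart of the argument if one wishes to avoid quoting external weighted capacity results.
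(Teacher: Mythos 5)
Your argument is correct but takes a genuinely different route from the paper, whose own proof is a one-liner: it cites Lemma~2.17 of Kim--Lee--Lee~\cite{KLL23} (which characterizes when singletons have zero condenser capacity) and combines it with Proposition~\ref{prop-cp-Cp} and~\eqref{eq-cap-E} to pass between the two capacities and reconcile the definitions. You instead prove everything directly. The reduction (b)$\Leftrightarrow$(c) via Proposition~\ref{prop-cp-Cp} matches the paper's use of that proposition; the Morrey-embedding argument for $sp>n$ and the scaling argument for $sp<n$ are both correct and standard. For the critical case $sp=n$, your preferred route via Lemma~\ref{lem-cap-extended} and the weighted $p$-capacity theory of Heinonen--Kilpel\"ainen--Martio~\cite{HeKiMa} does close the gap: the weight $w(x,t)=|t|^{p(1-s)-1}$ gives the weighted measure of $B'(z_0,r)$ comparable to $r^{n+p(1-s)}=r^{n+p-ps}$, and a point in $\R^{n+1}$ has zero weighted $p$-capacity precisely when that exponent is at least $p$, i.e.\ when $sp\le n$ (cf.\ \cite[Chapter~2]{HeKiMa}). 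This is the same Caffarelli--Silvestre-flavoured machinery the paper deploys in the proof of the Kellogg property, so it fits naturally with the surrounding material. What your approach buys is self-containedness and transparency about why the borderline case $sp=n$ holds; what it costs is length, and the logarithmic-truncation alternative you sketch but do not carry out would indeed require a somewhat delicate cancellation estimate if one wished to avoid the weighted-capacity detour. Since the weighted-capacity route is complete as sketched, the proposal as a whole is sound.
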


\begin{proof}
This follows directly from Lemma~2.17
in Kim--Lee--Lee~\cite{KLL23} together with
Proposition~\ref{prop-cp-Cp} and~\eqref{eq-cap-E}. 
\end{proof}

The following lemma shows in particular that a set
with zero Sobolev capacity has zero measure. 
It follows from Proposition~\ref{prop-cp-Cp} that the same is true for the condenser capacity.
Here and later, $|\cdot|$ denotes the Lebesgue measure and
$|\cdot|_{\Outer}$ is the Lebesgue outer measure.
For completeness and the reader's convenience we provide the proof.

\begin{lem}\label{lem-zero-cap}
If $E\subset \R^n$, then
$\Cpt(E) \ge |E|_{\Outer}$.
\end{lem}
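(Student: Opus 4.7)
The plan is to unwind the definition of $\Csp$ step by step, using at each stage that the $L^p$-norm of an admissible test function already controls the measure of the set being capped. The key observation is that the full Sobolev norm in~\eqref{eq-def-norm-cap} dominates the $L^p$-norm, so only this crude lower bound is needed.

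First I would handle compact sets $K \Subset \R^n$. For any admissible $u \in C_c^\infty(\R^n)$ with $u \ge 1$ on $K$, one has
\[
\|u\|_{\Wsp(\R^n)}^p \ge \|u\|_{L^p(\R^n)}^p \ge \int_K |u|^p \, dx \ge |K|.
\]
Taking the infimum over all such $u$ yields $\Csp(K) \ge |K|$.

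Next I would pass to open sets $G \subset \R^n$. By~\eqref{eq-cap-G} and the inner regularity of Lebesgue measure on open sets,
\[
\Csp(G) = \sup_{K \Subset G} \Csp(K) \ge \sup_{K \Subset G} |K| = |G|.
\]
Finally, for an arbitrary set $E \subset \R^n$, the outer regularity of Lebesgue measure together with~\eqref{eq-cap-E} gives
\[
\Csp(E) = \inf_{\substack{G \text{ open}\\ G \supset E}} \Csp(G) \ge \inf_{\substack{G \text{ open}\\ G \supset E}} |G| = |E|_{\Outer},
\]
which is the desired inequality.

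There is no real obstacle here; the only thing to be careful about is to keep track of which monotonicity (sup over compacts, inf over opens) corresponds to which regularity property of Lebesgue measure, so that the direction of the inequality is preserved at each step.
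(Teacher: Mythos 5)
Your proof is correct and follows essentially the same three-step argument as the paper: bound $\Csp(K)\ge|K|$ for compact $K$ via $\|u\|_{\Wsp(\Rn)}^p\ge\|u\|_{L^p(\Rn)}^p\ge|K|$, then extend to open sets by inner regularity and to arbitrary sets by outer regularity of Lebesgue measure. No gaps.
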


\begin{proof}
If $K$ is compact, then
\begin{equation*} 
 \Cpt(K)   
=\inf_u {\|u\|_{\Wsp(\Rn)}^p}
\ge \inf_u {\|u\|_{L^p(\Rn)}^p}
\ge |K|,
\end{equation*}
where the infima are taken over all $u\in C_c^\infty(\Rn)$ such that 
$u \ge 1$ on $K$.
It then follows, by the regularity of the Lebesgue measure,  that for open $G$,
\begin{equation*} 
\Cpt(G) 
= \sup_{\substack{K \text{ compact}\\ K \Subset G}} \Cpt(K) 
\ge \sup_{\substack{K \text{ compact}\\ K \Subset G}} |K|
= |G|.
\end{equation*}
And  then, by the definition of the outer
Lebesgue measure,
for arbitrary sets~$E$,
\begin{equation*} 
\Cpt(E) 
= \inf_{\substack{G \text{ open}\\ E \subset G}} \Cpt(G) 
\ge  \inf_{\substack{G \text{ open}\\ E \subset G}} |G|
=|E|_{\Outer}.
\qedhere
\end{equation*}
\end{proof}

The following lemma will be needed when proving Theorem~\ref{thm-Hg=Pg}.

\begin{lem} \label{lem-Wsp-Csp}
Let $E \subset\Rn$.
Then
\begin{equation} \label{eq-Wsp-Csp}
   \Cpt(E) = \inf_{u} {\|u\|^p_{\Wsp(\Rn)}},
\end{equation}
where the infimum is taken over all $u \in \Wsp(\Rn)$ such that
$0 \le u \le 1$ everywhere and $u=1$ 
in  an open set containing $E$.

If moreover 
$E=G$ is open and 
$\Csp(G)<\infty$, 
then the infimum is attained, i.e.\
there is $\psi \in \Wsp(\Rn)$ such that
$0 \le \psi \le 1$ everywhere, $\psi=1$ 
in $G$ 
and $\|\psi\|^p_{\Wsp(\Rn)} = \Cpt(G)$.
\end{lem}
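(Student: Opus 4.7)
The plan is to prove two inequalities separately and then read off attainability from the construction in one of them.

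First I would show the easier direction $\Csp(G)\le\inf_u\|u\|^p_{\Wsp(\Rn)}$. Take any admissible $u$, i.e.\ $u\in\Wsp(\Rn)$ with $0\le u\le1$ and $u=1$ on $G$, and fix a compact $K\Subset G$. Since $u\equiv1$ in an open neighbourhood of $K$, for small enough $\eps>0$ the standard mollification $u_\eps\in C^\infty(\Rn)$ still equals $1$ on $K$. Multiplying by a smooth cutoff $\eta_R\in C_c^\infty(\Rn)$ with $\eta_R\equiv1$ on $B(0,R)$, for $R$ large enough to contain $K$ we get $\eta_Ru_\eps\in C_c^\infty(\Rn)$ with $\eta_Ru_\eps\ge1$ on $K$, so $\Csp(K)\le[\eta_Ru_\eps]_{\Wsp(\Rn)}^p+\|\eta_Ru_\eps\|_{L^p(\Rn)}^p$. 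A routine dominated convergence argument applied to the Gagliardo integral shows $\eta_Ru_\eps\to u$ in $\Wsp(\Rn)$ as $\eps\to0$ and $R\to\infty$, hence $\Csp(K)\le\|u\|^p_{\Wsp(\Rn)}$. Taking the supremum over $K\Subset G$ and invoking the definition~\eqref{eq-cap-G} gives the inequality.

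For the converse inequality $\Csp(G)\ge\inf_u\|u\|^p_{\Wsp(\Rn)}$ I may assume $\Csp(G)<\infty$. Choose an exhaustion of $G$ by compacta $K_j\nearrow G$, and for each $j$ pick $\phi_j\in C_c^\infty(\Rn)$ with $\phi_j\ge1$ on $K_j$ and $\|\phi_j\|_{\Wsp(\Rn)}^p\le\Csp(K_j)+1/j\le\Csp(G)+1/j$. Replace $\phi_j$ by the truncation $\tilde\phi_j:=\min\{\max\{\phi_j,0\},1\}$, which still satisfies $\tilde\phi_j\equiv1$ on $K_j$ and does not increase the $\Wsp$ norm (since $t\mapsto\min\{\max\{t,0\},1\}$ is $1$-Lipschitz and kills negative values). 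The sequence $\{\tilde\phi_j\}$ is then bounded in the reflexive space $\Wsp(\Rn)$ (as $1<p<\infty$), so along a subsequence $\tilde\phi_{j_k}\rightharpoonup\psi$ weakly in $\Wsp(\Rn)$. Lower semicontinuity of the norm yields $\|\psi\|^p_{\Wsp(\Rn)}\le\liminf_k\|\tilde\phi_{j_k}\|^p_{\Wsp(\Rn)}\le\Csp(G)$.

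It remains to show $\psi$ is admissible. By Mazur's lemma, convex combinations of $\{\tilde\phi_{j_k}\}$ converge to $\psi$ strongly in $\Wsp(\Rn)$, hence pointwise a.e.\ along a further subsequence. Since each convex combination lies in $[0,1]$, we get $0\le\psi\le1$ a.e.; since for any compact $K\Subset G$ we have $K\subset K_{j_k}$ for all large $k$, each convex combination equals $1$ on $K$, so $\psi=1$ a.e.\ on $G$. Redefining $\psi$ on a null set—permitted by the paper's pointwise convention from Section~\ref{sec-sobolev}—we may assume $0\le\psi\le1$ everywhere and $\psi\equiv1$ on $G$, without altering $\|\psi\|_{\Wsp(\Rn)}$. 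Thus $\psi$ is admissible for the right-hand infimum and realizes a value $\le\Csp(G)$, which simultaneously proves the inequality and shows the infimum is attained. The main obstacle, and the reason a mere a.e.\ argument does not suffice, is turning the a.e.\ properties of the weak limit into pointwise ones on $G$; this is handled by Mazur's lemma together with the global convention on pointwise representatives.
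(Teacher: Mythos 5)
Your proof is correct and follows essentially the same strategy as the paper's: exhaust $G$ by compacta, mollify (with a cutoff) for the $\le$ inequality, and for the $\ge$ inequality truncate admissible $C_c^\infty$ functions, pass to a weak limit in the reflexive space $\Wsp(\Rn)$, use weak lower semicontinuity of the norm, and redefine on a null set. The only notable difference is cosmetic: to show the weak limit $\psi$ satisfies $0\le\psi\le1$ a.e.\ and $\psi=1$ a.e.\ on $G$, you invoke Mazur's lemma to get a.e.\ convergence of convex combinations, whereas the paper tests against the bounded linear functionals $u\mapsto\int_E u\,dx$; both are standard and equally effective.
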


\begin{proof}
By \eqref{eq-cap-E} it is enough to consider $E=G$ open.
Let $K_1\subset K_2 \subset \dots$ be compact sets such that
\[
G=\bigcup_{j=1}^\infty K_{j} \quad \text{and}  \quad \Cpt(K_j)\to \Cpt(G).
\]

Assume first that $u \in \Wsp(\Rn)$ is such that
$0 \le u \le 1$ everywhere and $u=1$ on $G$.
Then a mollification $\ub_j$ of $u$ is admissible for 
$\Cpt(K_j)$ and satisfies 
\[
\Cpt(K_j) \le \|\ub_j\|^p_{\Wsp(\Rn)} \le \|u\|^p_{\Wsp(\Rn)} + 1/j.
\]
Letting $j\to\infty$ then gives $\Cpt(G) \le \|u\|^p_{\Wsp(\Rn)}$,
which shows the $\le$ inequality in~\eqref{eq-Wsp-Csp}.
In particular \eqref{eq-Wsp-Csp} holds when $\Csp(G)=\infty$.

Assume next that $\Csp(G)<\infty$.
Then we can find $u_j \in C_c^\infty(\Rn)$ so that $u_j\ge1$ on $K_j$
and $\|u_j\|^p_{\Wsp(\Rn)} \le \Cpt(G) + 1/j$.
Let $v_j= \min\{(u_j)_\limplus,1\}$.
Since truncations decrease the norm, we see that 
\[
\|v_j\|^p_{\Wsp(\Rn)} \le \|u_j\|^p_{\Wsp(\Rn)} \le \Cpt(G) + 1/j < \infty.
\]
By the reflexivity of $\Wsp(\Rn)$, there is a 
subsequence of $\{v_j\}_{j=1}^\infty$
which converges weakly in $\Wsp(\Rn)$
to a function $\psi$
with $\|\psi\|^p_{\Wsp(\Rn)} \le \Cpt(G)$.
Using that
$u \mapsto \int_E u\,dx$ is a bounded linear functional 
on $\Wsp(\R^n)$ for every bounded measurable set $E$,  
we obtain  that $0 \le \psi \le 1$ a.e.\ and  $\psi=1$ a.e.\ in $G$.
After redefinition on a set of measure zero we get
$0 \le \psi \le 1$ everywhere on $\Rn$ and
$\psi =1$ everywhere in $G$.
Thus the  $\ge$ inequality in~\eqref{eq-Wsp-Csp} also holds and 
the infimum is attained.
\end{proof}

\begin{remark} \label{rmk-Wsp-csp-2}
It follows from the Poincar\'e inequality~\eqref{eq-PI} that
$\|\cdot\|_{\Wsp(\R^n)}$, $[\,\cdot\,]_{\Wsp(\R^n)}$ and  $[\,\cdot\,]_{\Vsp(\Om)}$
are equivalent norms on $\Vsp_0(\Om)$.
With this the proof of Lemma~\ref{lem-Wsp-Csp} can be modified to show
that for open sets $G\Subset\Om$, there is 
$\psi\in \Vsp_0(\Om)$ such that $0 \le \psi \le 1$ everywhere, 
$\psi=1$ in $G$ and 
$[\psi]^p_{\Wsp(\Rn)} \le \cpt(G,\Om)$.
Here, however, mollifications of $\psi$ typically do not
have compact support in $\Om$.
Therefore the
mollification argument at the beginning of the proof of Lemma~\ref{lem-Wsp-Csp}
does not apply, and at this
point we cannot conclude the equality $[\psi]^p_{\Wsp(\Rn)} = \cpt(G,\Om)$.
By using the Kellogg property, this will be done in
Proposition~\ref{prop-Wsp-cpt}.
\end{remark}

\section{The Kellogg property (Theorem~\ref{thm-kellogg})}
\label{sec-kellogg}

We are now ready to prove the Kellogg property, that the
set of Sobolev irregular points has capacity zero.

\begin{proof}[Proof of the Kellogg property\/ \textup(Theorem~\ref{thm-kellogg}\textup)]
Theorem~\ref{thm-Wiener} shows that $x_0 \in \Om$ is Sobolev
irregular if and only if
\[
  \int_0^1 \biggl(\frac{\csp(\itoverline{B(x_0, \rho)}\setm \Om,B(x_0,2\rho))}{\rho^{n-sp}}
     \biggr)^{1/(p-1)}       \frac{d \rho}{\rho}  <\infty.
\]    
Lemma~\ref{lem-cap-extended}, together with  Bj\"orn~\cite[Lemma~2.4]{JBWien},
shows that this is in turn equivalent to the condition
\[
\int_0^1 \biggl(\frac{\cpa(F\cap \itoverline{B'(z_0,\rho)},B'(z_0,2\rho))}
      {\cpa(B'(z_0,\rho),B'(z_0,2\rho))}
\biggr)^{1/(p-1)}    \frac{d\rho}{\rho} < \infty
\quad \text{at }z_0=(x_0,0)\in\R^{n+1},
\]
where 
\[
 F= (\R^n\setm\Om)\times\{0\},
\quad 
B'(z_0,r)=\{z \in \R^{n+1}: |z-z_0| <r\}
\]
and
$\cpa$ is the weighted \p-capacity in $\R^{n+1}$ associated with the weight
\[
  w(x,t)=|t|^a \quad \text{for }
  a=p(1-s)-1,
\]
as in  Heinonen--Kilpel\"ainen--Martio~\cite[Chapter~2]{HeKiMa}.
Hence, by Theorem~21.30  in~\cite{HeKiMa}, the point $x_0\in\bdy\Om$ is 
Sobolev irregular
for $\LL u=0$ in $\Om\subset\R^n$ if and only if $z_0=(x_0,0)$ is irregular for the
weighted \p-Laplace equation
\[
  \Div(w|\grad u|^{p-2}\grad u)=0 \quad \text{in } \R^{n+1}\setm F.
\]
The Kellogg property~\cite[Theorem~9.11]{HeKiMa}  for such equations
implies that the set $I_\Om\times\{0\}$ has weighted \p-capacity zero, i.e.\ 
\[
\cpa( I_\Om\times\{0\},B')=0
\]
for any sufficiently large open ball $B'\subset\R^{n+1}$.
Another use of 
Lemma~\ref{lem-cap-extended} now shows
that $\csp(I_\Om,B)=0$
for all sufficiently large open balls $B\subset\R^{n}$.
Hence $\Csp(I_\Om)=0$,
by Proposition~\ref{prop-cp-Cp}.
\end{proof}

The following fact may also be of interest.

\begin{prop}
Let $I_\Om\subset\bdy\Om$ be the set of Sobolev irregular boundary points for $\LL$
  in $\Om$.
Then $I_\Om$ is an $F_\sigma$ set.
\end{prop}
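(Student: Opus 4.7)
The plan is to apply Theorem~\ref{thm-Sobolev-reg}\ref{a-d} and reduce the claim to the upper semicontinuity of a single real-valued function on $\bdy\Om$. Set $d_{x_0}(y):=\min\{1,|y-x_0|\}$; then $d_{x_0}\in\VspOm\cap C(\R^n)$ and comparison with the constants $0$ and $1$ gives $0\le Hd_{x_0}\le 1$. By Theorem~\ref{thm-Sobolev-reg}\ref{a-d}, a point $x_0\in\bdy\Om$ is Sobolev regular if and only if
\[
L(x_0):=\limsup_{\Om\ni x\to x_0} Hd_{x_0}(x)=0,
\]
so, since $L\ge 0$,
\[
I_\Om=\bigcup_{k=1}^\infty \bigl\{x_0\in\bdy\Om:L(x_0)\ge \tfrac{1}{k}\bigr\}.
\]
It therefore suffices to show that $L\colon\bdy\Om\to[0,1]$ is upper semicontinuous; each superlevel set is then closed in $\bdy\Om$, and hence closed in $\R^n$ because $\bdy\Om$ is closed.

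The key ingredient is that $Hd_{x_0}$ depends Lipschitz-continuously on $x_0$, uniformly in $x$. The reverse triangle inequality (with a short case distinction for the truncation at $1$) gives
\[
|d_{x_0}(y)-d_{x_0'}(y)|\le|x_0-x_0'|\quad\text{for all }y\in\R^n,
\]
so $d_{x_0}\le d_{x_0'}+|x_0-x_0'|$ pointwise on $\R^n$. Since constants are $\LL$-harmonic and the $H$-operator satisfies $H(g+c)=Hg+c$ for every constant $c$ (by uniqueness of the $H$-solution), the comparison principle in Corollary~\ref{cor-comparison-sol} yields $Hd_{x_0}\le Hd_{x_0'}+|x_0-x_0'|$ everywhere in $\R^n$, and by symmetry
\[
|Hd_{x_0}(x)-Hd_{x_0'}(x)|\le|x_0-x_0'|\quad\text{for every }x\in\R^n.
\]

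Granted this estimate, upper semicontinuity of $L$ follows by a routine extraction. Given $x_0^j\to x_0$ in $\bdy\Om$, since $x_0^j$ lies in $\bdy\Om$ and $\Om$ is open, we can pick $y_j\in\Om$ with $|y_j-x_0^j|<1/j$ and $Hd_{x_0^j}(y_j)\ge L(x_0^j)-1/j$. Then $y_j\to x_0$ through $\Om$, and the Lipschitz estimate yields
\[
Hd_{x_0}(y_j)\ge Hd_{x_0^j}(y_j)-|x_0^j-x_0|\ge L(x_0^j)-\tfrac{1}{j}-|x_0^j-x_0|.
\]
Taking $\limsup_{j\to\infty}$ on both sides, and noting that $\limsup_{j\to\infty}Hd_{x_0}(y_j)\le L(x_0)$ (since $y_j\in\Om$ and $y_j\to x_0$), we conclude $\limsup_{j\to\infty} L(x_0^j)\le L(x_0)$, which is precisely the upper semicontinuity of $L$.

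The essentially only nontrivial step is the Lipschitz dependence of $Hd_{x_0}$ on $x_0$; once that is in hand, everything else is a short topological unwrapping. In particular, no semicontinuity properties of the capacities $\csp$ or $\Csp$ and no approximation by rational balls are required, which is what made the more capacity-based approaches awkward.
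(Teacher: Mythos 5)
Your proof is correct and takes a genuinely different route from the paper's. The paper covers $\bdy\Om$ by finitely many balls of radius $1/i$, introduces Lipschitz cut-offs $\phi_{i,j}$ supported in the doubled balls, and shows directly that
\[
I_\Om=\bigcup_{i,j,k}\bigl(\itoverline{B}_{i,j}\cap\bdy\Om\cap\overline{\{y\in\Om:H\phi_{i,j}(y)<1-1/k\}}\bigr)
\]
is a countable union of compact sets; the key step there is to compare a general $g$ witnessing irregularity against the fixed cut-off $\phi_{i,j}$ via Corollary~\ref{cor-comparison-sol}. You instead fix the single family of test data $d_{x_0}$ furnished by Theorem~\ref{thm-Sobolev-reg}\ref{a-d}, define $L(x_0)=\limsup_{\Om\ni x\to x_0}Hd_{x_0}(x)$, and prove $L$ is upper semicontinuous on $\bdy\Om$ by combining $H(g+c)=Hg+c$, Corollary~\ref{cor-comparison-sol}, and the pointwise bound $|d_{x_0}-d_{x_0'}|\le|x_0-x_0'|$ into the clean estimate $|Hd_{x_0}-Hd_{x_0'}|\le|x_0-x_0'|$ in $\R^n$. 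This avoids the covering and the auxiliary cut-offs entirely and reduces the $F_\sigma$ claim to a one-line superlevel-set decomposition. Both proofs are elementary; yours is shorter and isolates the Lipschitz stability of $x_0\mapsto Hd_{x_0}$, which is a pleasant quantitative fact in its own right, while the paper's produces $I_\Om$ explicitly as a union of compacta.

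Two tiny points worth stating explicitly if you were to polish this: (i) $d_{x_0}\in\VspOm\cap C(\R^n)$ needs $0<s<1$ and $\Om$ bounded so that the seminorm is finite, which holds under the paper's standing hypotheses; (ii) the equivalence ``$x_0$ regular iff $L(x_0)=0$'' uses $Hd_{x_0}\ge0$ to upgrade $\limsup=0$ to $\lim=0$ in Theorem~\ref{thm-Sobolev-reg}\ref{a-d}. You mention both in passing; they are fine.
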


\begin{proof}
For each  $i=1,2,\ldots$\,, we can cover $\bdy \Om$ by a finite number of open balls
$B_{i,j}= B(x_{i,j}, 1/i)$, $1 \le j \le N_i$.
Choose
$\phi_{i,j}\in \Lip_c(2B_{i,j})$ 
such that
$0 \le \phi_{i,j} \le 1$ in $\R^n$ 
and $\phi_{i,j}=1$ on $B_{i,j}$.
Consider the sets 
\[
I_{i,j} = \Bigl\{ x \in  \itoverline{B}_{i,j} \cap\bdy \Om : 
     \liminf _{\Om \ni y \to x} H \phi_{i,j}(y) < \phi_{i,j} (x)=1\Bigr\}.
\]
Note that $I_{i,j}$ contains only Sobolev irregular points.

Next, let $x \in I_\Om$.
By Theorem~\ref{thm-Sobolev-reg} there is a 
bounded  function $g \in  \VspOm \cap C(\Rn)$ such that 
it is \emph{not} true that 
$ \lim_{\Om \ni y \to x} H g(y) = g (x)$
(i.e.\ either the limit does not exist or it exists and is different from $g(x)$).
By considering $-g$ if necessary, and adding a constant,
we can assume that $g \ge 1$ in $\R^n$  and that
\[
      \liminf_{\Om \ni y \to x} H g(y) < g (x).
\]
Since $g$ is continuous we can find a ball $B_{i,j}$ 
containing the point $x$ so that
\[
      M:= \inf_{2B_{i,j}} g 
    > \liminf_{\Om \ni y \to x} H g(y).
\]
Thus, $\phi_{i,j} \le g/M$ in $\Rn$, and hence, by 
the comparison principle (Corollary~\ref{cor-comparison-sol}),
\[
       \liminf_{\Om \ni y \to x} H \phi_{i,j}(y)
       \le \frac{1}{M} \liminf_{\Om \ni y \to x} H g(y)
        < 1 = \phi_{i,j} (x),
\]
i.e.\ $x \in I_{i,j}$.
We therefore conclude that
\begin{equation*}
    I_\Om = \bigcup_{i=1}^\infty \bigcup_{j=1}^{N_i} I_{i,j}
  = \bigcup_{i=1}^\infty \bigcup_{j=1}^{N_i} \bigcup_{k=1}^\infty 
      ( \itoverline{B}_{i,j}\cap \bdy \Om  \cap 
      \overline{\{y \in \Om: H \phi_{i,j}(y) <1-1/k\}}),
\end{equation*}
which is a countable union of compact sets, and thus an
$F_\sigma$ set.
\end{proof}

As an application of the Kellogg property we can obtain the following result,
cf.\ Remark~\ref{rmk-Wsp-csp-2}.
Here,
we let $\Et$, $\Kt$ and $\Ht$  denote the energy,
obstacle problem and Sobolev solution  with respect
to the kernel $k(x,y)=|x-y|^{-n-sp}$.

\begin{prop} \label{prop-Wsp-cpt}
Let $E\Subset\Om$.  
Then
\begin{equation*} 
   \cpt(E,\Om) = \inf_{u \in \A} {[u]_{\Wsp(\Rn)}^{p}},
\end{equation*}
where
\[
 \A=\{u \in \VspoOm: 0 \le u \le 1 \text{ in $\Rn$ and } u=1 \text{ in 
an open set containing } E\}.
\]

If moreover $E$ is open, then the infimum is attained by any
solution $v \in \A$ of the $\Kt_{\chi_G,0}(\Om)$-obstacle problem,
and there are such solutions.
\end{prop}

Note that if
$E$ is not open, 
then the infimum is usually not attained, for example not if
$\cpt(E,\Om)=0$ and $E \ne \emptyset$.
In the proof we will use the following lemma, 
whose proof is standard
and provided here for the reader's convenience.

\begin{lem} \label{lem-obst-energy-min}
Let $g\in \Vsp(\Om)$ and $\psi:\Om\to\eR$ be such that $\K_{\psi,g}(\Om) \ne \emptyset$.
Then $u$ is a solution of the $\K_{\psi,g}(\Om)$-obstacle problem 
if and only if $u$ minimizes the energy $\E(u,u)$ among all functions in 
$\K_{\psi,g}(\Om)$.
\end{lem}

\begin{proof}
Assume first that $u$ is a solution of the $\K_{\psi,g}(\Om)$-obstacle problem.
Let $v \in \K_{\psi,g}(\Om)$. 
Then, $\E(u,v)-\E(u,u)=\E(u,v-u) \ge 0$.
H\"older's inequality, then gives
\begin{equation} \label{eq-min}
 \E(u,u)  \le \E(u,v) \le  \E(u,u)^{1-1/p} \E(v,v)^{1/p}.
\end{equation}
Since $\E(u,u)<\infty$, we conclude that $\E(u,u) \le \E(v,v)$
and thus $u$ is an energy minimizer.

To show the converse implication, assume that
$u \in \K_{\psi,g}(\Om)$ is an energy minimizer within $\K_{\psi,g}(\Om)$.
To show that $u$ is a solution of the $\K_{\psi,g}(\Om)$-obstacle problem,
let $v$ be such a solution 
provided by Theorem~\ref{thm-obst-KL-solv}.
By the above, $v$ is also a minimizer and so
we must have equality throughout \eqref{eq-min},
in particular in the H\"older inequality.
Therein, equality holds if and only if the functions
$(x,y)\mapsto u(x)-u(y)$ and $(x,y)\mapsto v(x)-v(y)$
are linearly dependent.
Hence, for some 
$a,b\in \R$, $(a,b) \ne (0,0)$, 
\[
  a(u(x)-u(y))=b(v(x)-v(y)) 
\quad \text{for  a.e.\ } (x,y) \in \R^{2n}
\]
and thus $|a|^p\E(u,u)=|b|^p\E(v,v)$.
Since $\E(u,u)=\E(v,v)$ we must have $a=\pm b\ne 0$,
and using that $u(y)=v(y)=0$ for $y\notin\Om$, we get that
$u(x)=\pm v(x)$ for a.e.\ $x\in\Om$.

If $u=v$ a.e.\ in $\Om$, then clearly, $u$ is a solution 
of the $\K_{\psi,g}(\Om)$-obstacle problem.
On the other hand, if $u=-v$,  then $0=\tfrac12(u+v) \in \K_{\psi,g}(\Om)$ and 
so $\E(u,u)\le \E(0,0)=0$, i.e.\ $u=-v=0$ a.e.\ is
also  a solution of the $\K_{\psi,g}(\Om)$-obstacle problem.
\end{proof}

\begin{proof}[Proof of Proposition~\ref{prop-Wsp-cpt}]
By \eqref{eq-cap-E}, it is enough to consider
$E=G$ open.
It follows from the Poincar\'e inequality~\eqref{eq-PI} that
$\|\cdot\|_{\Wsp(\R^n)}$, $[\,\cdot\,]_{\Wsp(\R^n)}$ and  $[\,\cdot\,]_{\Vsp(\Om)}$
are equivalent norms on $\Vsp_0(\Om)$.
With this, the proof of Lemma~\ref{lem-Wsp-Csp} can be modified to show
that there is  $\psi\in \A$ such that $[\psi]^p_{\Wsp(\Rn)} \le \cpt(G,\Om)$.

By Theorem~\ref{thm-obst-KL-solv}, there
is a solution $v$ of the $\Kt_{\chi_G,0}(\Om)$-obstacle problem.
Since $\psi \in \A \subset \Kt_{\chi_G,0}(\Om)$
it  follows from Lemma~\ref{lem-obst-energy-min} that
\[
[v]_{\Wsp(\Rn)}^{p}
= \Et(v,v) \le \Et(\psi,\psi) 
= [\psi]_{\Wsp(\Rn)}^{p}
\le \cpt(G,\Om).
\]
The comparison principle for obstacle problems
 (Theorem~\ref{thm-comp-obs})
implies that $v\le1$ a.e.\ in $\Om$ and 
after redefinition on a set of measure zero we can assume
that $v = 1$ everywhere in $G$.

It remains to show that 
$\cpt(G,\Om) \le [v]_{\Wsp(\Rn)}^{p}$.
The function $v$ is, by the definition of the obstacle problem,
also a solution  of the $\Kt_{0,v}(\Om \setm \clG)$-obstacle problem.
By the comparison principle (Corollary~\ref{cor-comparison-sol}),
the Sobolev solution
$\Ht_{\Om \setm \clG} v \ge 0$ 
in $\Om \setm \clG$ and hence  it is also a solution of the 
$\Kt_{0,v}(\Om \setm \clG)$-obstacle problem.
By the uniqueness for solutions of obstacle problems, we conclude that
$v=\Ht_{\Om \setm \clG} v$ a.e., and we can replace $v$ by $\Ht_{\Om \setm \clG} v$,
which we do 
in the rest of the proof.

Let $I_\Om$ be the set of Sobolev irregular boundary points for $\Om$
and $0<\eps<\dist(G,\Om^c)$.
By the Kellogg property (Theorem~\ref{thm-kellogg})
and Proposition~\ref{prop-cp-Cp}, 
\[
\cpt(I_\Om,V_\eps) = \Cpt(I_\Om)=0,
\]
where $V_\eps$ is the  $\eps$-neighbourhood of $\bdy\Om$.
Thus, by Remark~\ref{rmk-Wsp-csp-2}, there
is $\phi \in \Vsp_0(V_\eps)$ such that
$0 \le \phi \le 1$ everywhere, $\phi=1$ in an open set $V \supset I_\Om$ and 
$[\phi]_{\Wsp(\Rn)} <\eps$.
With $u:=(v-\phi)_\limplus \ge 0$ we then get from
Theorem~\ref{thm-Sobolev-reg} and Corollary~\ref{cor-local-subset} that
\[
   0 
  \le    \limsup_{\Om \ni x \to x_0} u(x)
   \le  \lim_{\Om \ni x \to x_0} v(x) 
= \lim_{\Om \ni x \to x_0} \Ht_{\Om \setm \clG} v(x) 
 =0 
\quad \text{if } x_0 \in \bdy \Om \setm I_\Om.
\]
On the other hand, since $\phi=1$ in $V$, 
\[
    \lim_{\Om \ni x \to x_0} u(x)=0 
\quad \text{if } x_0 \in I_\Om.
\]
Thus $u_\eps:=((1+\eps)u-\eps)_\limplus$ has compact support in $\Om$,
and so it follows from a standard mollification argument that
\begin{equation*}
\cpt(G,\Om)  \le     [u_\eps]_{\Wsp(\Rn)}^p
\le (1+\eps)^p [u]_{\Wsp(\Rn)}^p. 
\end{equation*}
Since also
\[
[u]_{\Wsp(\Rn)} \le  [v-\phi]_{\Wsp(\Rn)}  
<  [v]_{\Wsp(\Rn)}  + \eps,
\]
letting $\eps \to 0$ shows that  indeed 
$    \cpt(G,\Om) \le   [v]_{\Wsp(\Rn)}^p$.
\end{proof}

\section{A convergence result for the obstacle problem}\label{sec-obstacle}

Next, we discuss the pointwise convergence of sequences of solutions of the obstacle problem and the Dirichlet problem.

\begin{thm}   \label{thm-conv-obst-prob}
Assume that $g_j\in \Vsp(\Om)$, $j=0,1,\ldots$\,,  and that $g_j\to g_0$  in $\Vsp(\Om)$ 
and $g_j\searrow g_0$ pointwise in $\R^n$, as $j \to \infty$.
Let $u_j$ be a solution of the
$\K_{g_j,g_j}(\Om)$-obstacle problem, $j=0,1,\ldots$\,.
Then $u_j \searrow u_0$  a.e.\ in $\Om$.

Moreover, $Hg_j \searrow Hg_0$ everywhere in $\Om$.
\end{thm}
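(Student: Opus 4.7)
The plan is to establish monotonicity via the comparison principle for obstacle problems, derive a uniform energy bound through a cleverly chosen competitor, identify the a.e.\ limit with $u_0$ using the variational formulation and uniqueness, and then run an analogous argument for the Dirichlet solutions with an additional interior-regularity step to upgrade a.e.\ to pointwise-everywhere convergence.

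First I would apply Theorem~\ref{thm-comp-obs} with $\psi_1=g_0\le g_j=\psi_2$ and corresponding boundary data; the compatibility $(g_0-g_j)_\limplus=0 \in \Vspo(\Om)$ is trivial, yielding $u_0\le u_j$ a.e., and analogously $u_{j+1}\le u_j$ a.e. Hence $u:=\lim_{j\to\infty}u_j$ exists a.e.\ in $\Om$ with $u\ge u_0$ a.e. To get a uniform bound on $\phi_j:=u_j-g_j\in \Vspo(\Om)$, I would use the competitor $v_j:=u_0+(g_j-g_0)$, which lies in $\K_{g_j,g_j}(\Om)$ since $v_j-g_j=u_0-g_0\in \Vspo(\Om)$ and $v_j\ge g_j$ (because $u_0\ge g_0$). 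The variational inequality for $u_j$ then gives $\E(u_j,u_0+g_j-g_0-u_j)\ge 0$, i.e.\ $\E(u_j,\phi_j)\le \E(u_j,u_0-g_0)$, and the $p$-coercivity of $\E$ (via H\"older and the ellipticity~\eqref{eq-comp-(x,y)}) yields $[\phi_j]_{\Vsp(\Om)}\le C$ uniformly in $j$.

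By reflexivity of $\Vspo(\Om)$, a subsequence $\phi_{j_k}$ converges weakly to the a.e.\ limit $u-g_0$, so $u-g_0\in \Vspo(\Om)$ and $u\in\K_{g_0,g_0}(\Om)$. For any $w\in\K_{g_0,g_0}(\Om)$, the shift $w+(g_j-g_0)$ lies in $\K_{g_j,g_j}(\Om)$, so the variational inequality reads $\E(u_j,w+g_j-g_0-u_j)\ge 0$. I would then pass to the limit via (i) weak convergence of $\phi_{j_k}$, (ii) strong convergence $g_j\to g_0$ in $\Vsp(\Om)$, and (iii) a Minty--Browder monotonicity argument using $(\Phi(a)-\Phi(b))(a-b)\ge 0$ for $\Phi(t)=|t|^{p-2}t$ (as already exploited in the comparison-principle proof), obtaining $\E(u,w-u)\ge 0$. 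Hence $u$ solves the $\K_{g_0,g_0}(\Om)$-obstacle problem, and the uniqueness in Theorem~\ref{thm-obst-KL-solv} forces $u=u_0$ a.e.; since every subsequence has a further subsequence converging to the same limit, the full sequence converges.

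For the $H$-solution claim the same strategy applies with the degenerate obstacle $\psi\equiv-\infty$: Corollary~\ref{cor-comparison-sol} gives $Hg_{j+1}\le Hg_j$ and $Hg_j\ge Hg_0$ in $\Rn$, and using $Hg_0+(g_j-g_0)$ as competitor yields $Hg_j\to Hg_0$ a.e.\ in $\Om$. To upgrade to everywhere in $\Om$ (the extra content of the \emph{moreover} clause), I would invoke the interior H\"older regularity of $\LL$-harmonic functions from Di Castro--Kuusi--Palatucci~\cite{DCKP16}: the $Hg_j$ are uniformly bounded in $\Vsp(\Om)$ and in $L^{p-1}_{sp}(\Rn)$, hence equi-H\"older continuous on compact subsets of $\Om$, and an Arzel\`a--Ascoli argument upgrades the pointwise a.e.\ limit to a locally uniform one, making the limit continuous and therefore equal to $Hg_0$ pointwise everywhere in $\Om$. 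The main obstacle throughout is the nonlinear passage to the limit in the variational inequality: weak $\Vsp$-convergence alone is insufficient to pass the limit through the nonlinear form $\E(u_j,\cdot)$, and the Minty--Browder trick is essential to close the argument.
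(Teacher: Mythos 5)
Your argument is correct, but in two places it follows a genuinely different route from the paper. For the monotonicity, energy bound and membership $u\in\K_{g_0,g_0}(\Om)$ you reach the same conclusions (the paper uses the simpler competitor $g_j\in\K_{g_j,g_j}(\Om)$ rather than $u_0+(g_j-g_0)$, and verifies $u-g_0\in\Vspo(\Om)$ by a squeeze via Corollary~\ref{lem-police} and Lemma~\ref{lem-truncation} rather than by identifying a weak limit with the a.e.\ limit, but both give the required facts). The real divergence is in the limit passage: you invoke Minty--Browder, which sidesteps having to identify the weak $L^{p'}$-limit of the nonlinear term $w_j(x,y)=|u_j(x)-u_j(y)|^{p-2}(u_j(x)-u_j(y))$; the paper instead passes to a subsequence along which $w_j$ converges both weakly and $\mu$-a.e., then uses Fatou's lemma (weak lower semicontinuity of $\iint|u(x)-u(y)|^p\,d\mu$) together with the weak convergence $w_j\rightharpoonup w$ to deduce $\E(u,v-u)\ge 0$ directly. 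Both are standard and correct. The second divergence is the upgrade from a.e.\ to everywhere convergence of $Hg_j$: you reach for interior H\"older estimates from Di Castro--Kuusi--Palatucci and Arzel\`a--Ascoli, which works (once one notes the uniform $L^\infty_{\mathrm{loc}}$ bound $Hg_0\le Hg_j\le Hg_1$ and the uniform tail bound coming from $g_j\searrow g_0$), but the paper has a much shorter route available: Harnack's convergence theorem (Theorem~\ref{thm-harnack}, which the paper records immediately after this proof) applied to the decreasing sequence $\{Hg_j\}$ of $\LL$-harmonic functions gives directly that the pointwise limit is $\LL$-harmonic, hence continuous and equal to $Hg_0$ everywhere. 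Your route buys independence from the special Harnack-type convergence result but at the cost of invoking quantitative interior regularity; the paper's route is lighter because the right convergence theorem for monotone sequences of $\LL$-harmonic functions is already at hand.
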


For the Dirichlet problem in the last part 
of Theorem~\ref{thm-conv-obst-prob}
we get convergence everywhere.
On the contrary, the following example shows that
for the obstacle problem the pointwise convergence 
$u_j(x) \to u_0(x)$ can fail at some points, even if all $u_j$, $j=0,1,\ldots$\,,
are continuous.

\begin{example}
Assume that $0 <sp \le n$.
Let $\Om=B(0,2)$, $K_j=\itoverline{B(0,1/j)}$ and 
$g_j=\pot(K_j,\Om)$ be the corresponding $\LL$-potentials, $j=1,2,\ldots$\,,
as in Definition~\ref{def-pot}.
Then each boundary point of $\Om \setm K_j$ satisfies
the exterior cone condition and is thus Sobolev regular
by the Wiener criterion (Theorem~\ref{thm-Wiener}).
Hence $g_j$ is continuous in $\R^n$.
Moreover, 
$\{g_j\}_{j=1}^\infty$ is a pointwise decreasing sequence,
by the comparison principle (Corollary~\ref{cor-comparison-sol}).
Let $g_0=\lim_{j \to \infty} g_j$.

Next, for every function $u\in C^\infty_c(\Om)$, which is admissible 
for $\csp(K_j,\Om)$, 
Lemma~\ref{lem-Vsp-C} shows that the function $\ut:=\min\{u,1\}$ satisfies
\[
\ut - g_j \in \Vsp_0(\Om \setm K_j),
\]
and so $\ut \in \K_{-\infty,g_j}(\Om \setm K_j)$.
It thus follows from Lemma~\ref{lem-obst-energy-min},
with $\La\ge1$ as in~\eqref{eq-comp-(x,y)}, that
\[
\La^{-1}  [g_j]^p_{W^{s,p}(\R^n)} 
\le  \E(g_j,g_j) \le \E(\ut,\ut)
\le \La  [\ut]^p_{W^{s,p}(\R^n)}
\le \La [u]^p_{W^{s,p}(\R^n)}.
\]
Taking infimum over all such $u$ shows that
\[
 0 \le \limsup_{j \to \infty} {[g_j]_{W^{s,p}(\R^n)}^p} 
   \le \La^2   \lim_{j \to \infty} \cpt(K_j,\Om)
    =  \La^2
   \csp(\{0\},\Om) =0,
\]
by Lemma~\ref{lem-sp<=n}. (Here we use that $sp \le n$.)
It then follows from the Poincar\'e inequality~\eqref{eq-PI} that also
$\lim_{j \to \infty} \| g_j\|_{L^p(\R^n)}=0$, and thus $g_0=0$ a.e.

Clearly $u_j:=g_j$ is the continuous solution of the
$\K_{g_j,g_j}(\Om)$-obstacle problem, $j=1,2,\ldots$\,.
On the other hand, $u_0 \equiv 0$ is the continuous solution of the
$\K_{g_0,g_0}(\Om)$-obstacle problem.
Thus $u_j(0) \searrow g_0(0)=1 > 0=u_0(0)$,
i.e.\ the convergence is not everywhere.
In fact, it follows from Harnack's convergence theorem 
(Theorem~\ref{thm-harnack} below)
that $g_0=\chi_{\{0\}}$.
\end{example}

\begin{proof}[Proof of Theorem~\ref{thm-conv-obst-prob}]
The sequence $\{u_j\}_{j=1}^\infty$ is decreasing a.e.\ in $\Om$ by Theorem~\ref{thm-comp-obs}.
Write $g=g_0$ and define 
\[
u=\liminf_{j\to\infty}u_j.
\]
In particular, $u=\lim_{j \to \infty} g_j=g$ in~$\Omc$.
It is enough to show that $u$ is a solution of 
the $\mathcal{K}_{g, g}(\Omega)$-obstacle problem 
because the uniqueness of solutions to the obstacle problem 
(see Theorem~\ref{thm-obst-KL-solv})
then yields $u=u_0$ a.e.\ in $\mathbb{R}^n$.

We first claim that $u \in \mathcal{K}_{g, g}(\Omega)$. 
Indeed, from $Hg \le u \le u_j \le u_1$ a.e.\  we see that
$u \in L^p(\Omega)$.
In order to prove that $u \in \VspOm$, we define 
$d\mu(x, y)=k(x, y) \,dy\,dx$ and introduce
\begin{equation*}
w_j(x, y) = |u_j(x)-u_j(y)|^{p-2}(u_j(x)-u_j(y)), \quad j=1, 2,\dots.
\end{equation*}
Note that
\begin{equation}\label{eq-uw-comp}
\La^{-1} [u_j]_{\Vsp(\Om)}^p \le \|w_j\|_{L^{p'}((\Om^c\times \Om^c)^c, \mu)}^{p'} 
\le 2\La  [u_j]_{\Vsp(\Om)}^p,
\end{equation}
where $p'=p/(p-1)$ and
$\La\ge1$ is as in~\eqref{eq-comp-(x,y)}.
Since $u_j$ is a solution of the $\K_{g_j,g_j}(\Om)$-obstacle problem 
and $g_j \in \K_{g_j, g_j}(\Om)$, 
it follows from Lemma~\ref{lem-obst-energy-min}
that
\begin{equation}\label{eq-E-uj-bdd}
\|w_j\|_{L^{p'}((\Om^c\times \Om^c)^c, \mu)} 
= \E(u_j,u_j)^{1/p'} \le \E(g_j,g_j)^{1/p'}
\leq (2\Lambda)^{1-1/p} [g_j]_{\VspOm}^{p-1}.
\end{equation}
In particular, \eqref{eq-uw-comp} and \eqref{eq-E-uj-bdd} show that
$[u]_{\Vsp(\Om)} \le \liminf_{j\to\infty} [u_j]_{\Vsp(\Om)} < \infty$, 
which gives $u\in \Vsp(\Om)$.
Next, $0 \le (u-g_j)_\limplus \le (u_j-g_j)_\limplus$ a.e.\ and
$(u_j-g_j)_\limplus \in \Vsp_0(\Om)$
by Lemma~\ref{lem-Vspo-limplus}.
Thus Corollary~\ref{cor-police} shows that 
$(u-g_j)_\limplus \in \Vsp_0(\Om)$ for all $j=1,2,\ldots$\,.
Lemma~\ref{lem-truncation},
 together with the fact that $u-g_j\to u-g$ 
in $\Vsp(\Om)$ as $j\to\infty$, yields 
\[
\Vsp_0(\Om) \ni (u-g_j)_\limplus \to (u-g)_\limplus \quad \text{in }\Vsp(\Om),
\]
and hence $(u-g)_\limplus \in \Vsp_0(\Om)$ 
(because $\Vsp_0(\Om)$ is complete by definition).
Since clearly $u\ge g$ a.e.\ in $\Om$ and $u=g$ in $\Om^c$, 
this implies that 
$u-g \in \Vsp_0(\Om)$ and so
$u \in \mathcal{K}_{g, g}(\Om)$.

Next we prove that $u$ is a solution of the $\mathcal{K}_{g, g}(\Omega)$-obstacle
problem.
For every $v \in \K_{g,g}(\Om)$, we have $v+g_j-g\in\K_{g_j,g_j}(\Om)$
and hence $\E(u_j,v+g_j-g-u_j)\ge0$, which yields
\begin{align}
&\iint_{(\Om^c\times\Om^c)^c} |u_j(x)-u_j(y)|^p \,d\mu \nonumber \\
&\qquad \leq \iint_{(\Om^c \times \Om^c)^c} w_j(x, y) (v(x)-v(y)) \,d\mu \label{eq-int-with-v} \\
&\qquad \quad + \iint_{(\Om^c \times \Om^c)^c} w_j(x, y) ((g_j-g)(x)-(g_j-g)(y)) \,d\mu.
\nonumber
\end{align}
It follows from \eqref{eq-E-uj-bdd} that $\{w_j\}_{j=1}^\infty$ is a bounded sequence 
in $L^{p'}((\Om^c \times \Om^c)^c, \mu)$.
We can therefore find a subsequence, also denoted by $w_j$, weakly converging to $w$ 
in $L^{p'}((\Om^c \times \Om^c)^c, \mu)$, such that
\begin{equation*}
w_j(x,y) \to w(x,y) := |u(x)-u(y)|^{p-2} (u(x)-u(y))
\quad \text{$\mu$-a.e.\ as } j\to \infty.
\end{equation*}
The weak convergence of $w_j$ in $L^{p'}((\Om^c \times \Om^c)^c, \mu)$ implies that
\begin{equation*}
\iint_{(\Om^c \times \Om^c)^c} w_j(x, y)(v(x)-v(y)) \,d\mu 
\to \iint_{(\Om^c \times \Om^c)^c} w(x, y) (v(x)-v(y)) \,d\mu.
\end{equation*}
At the same time, by H\"older's inequality,
\begin{equation*}
\iint_{(\Om^c \times \Om^c)^c} w_j(x, y) ((g_j-g)(x)-(g_j-g)(y)) \,d\mu 
\leq 2\La [u_j]_{\Vsp(\Om)}^{p-1} [g_j-g]_{\Vsp(\Om)} \to 0,
\end{equation*}
since $\{u_j\}_{j=1}^\infty$ is a bounded sequence in $\Vsp(\Om)$
and $g_j\to g$ in $\Vsp(\Om)$.
Inserting the last two estimates into~\eqref{eq-int-with-v}, 
we therefore obtain that
\begin{align*}
\iint_{(\Om^c\times\Om^c)^c} |u(x)-u(y)|^p \, d\mu
&\leq \liminf_{j\to\infty} \iint_{(\Om^c\times\Om^c)^c}
|u_j(x)-u_j(y)|^p\, d\mu \\
&\le \iint_{(\Om^c\times\Om^c)^c} w(x, y) (v(x)-v(y)) \,d\mu,
\end{align*}
from which we conclude that $\mathcal{E}(u, v-u) \ge0$.
Thus $u$ is a  solution of the $\K_{g,g}(\Om)$-obstacle problem.

The convergence $Hg_j \to Hg$ a.e.\ is proved in the same way, just replacing
$\K_{g,g}(\Om)$ and $\K_{g_j,g_j}(\Om)$ by $\K_{-\infty,g}(\Om)$ and
$\K_{-\infty,g_j}(\Om)$.
As $Hg_j$ and $Hg$ are continuous, we also know
that $\{Hg_j\}_{j=1}^\infty$ is a decreasing sequence of 
$\LL$-harmonic functions in $\Om$.
It then follows from
Harnack's convergence theorem below 
(applied to $\{-Hg_j\}_{j=1}^\infty$),
that there is an $\LL$-harmonic function $u$ in $\Om$
such that $Hg_j \searrow u$ everywhere.
In particular, $u \equiv g$ in $\Omc$ and $u=Hg$ a.e.\ in $\Om$.
Since both $u$ and $Hg$ are continuous in $\Om$, they must be identical.
\end{proof}

It remains to state the Harnack's convergence theorem, which we
will also need later on.

\begin{thm}\label{thm-harnack}
\textup{(Harnack's convergence theorem, 
Korvenp\"a\"a--Kuusi--Palatucci~\cite[Theorem~15]{KKP17})}
Let $\{u_j\}_{j=1}^\infty$ be an increasing sequence of functions 
on $\R^n$ such that $u_j$ is $\LL$-harmonic  in $\Om$, $j=1,2,\dots$\,.
Then either $u:=\lim_{j \to \infty} u_j$ is $\LL$-harmonic  in $\Om$ 
or $u \equiv \infty$ in $\Om$.
\end{thm}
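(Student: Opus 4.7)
The plan is to combine the nonlocal Harnack inequality with interior Hölder regularity of $\LL$-harmonic functions, and then pass to the limit in the weak formulation. First I would reduce to the case $u_j \ge 0$ by subtracting $u_1$, which is continuous in $\Om$ and hence locally bounded there. The limit $u$ is then the supremum of an increasing sequence of continuous functions on $\Om$ and thus lower semicontinuous in $\Om$.

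Next I would establish the dichotomy via the nonlocal Harnack inequality. Let $E = \{x \in \Om : u(x) < \infty\}$. If $x_0 \in E$ and $B=B(x_0,r)$ with $2B \Subset \Om$, the nonlocal Harnack inequality for the nonnegative $\LL$-harmonic functions $u_j$ (as in Di~Castro--Kuusi--Palatucci~\cite{DCKP16} or \cite{KKP17}) gives
\[
\sup_{B(x_0,r/2)} u_j \le C \inf_{B(x_0,r/2)} u_j + C\, r^{sp/(p-1)} \biggl( \int_{\Rn \setminus B} \frac{u_j(y)^{p-1}}{|y-x_0|^{n+sp}}\, dy \biggr)^{1/(p-1)}.
\]
Rewritten as a lower bound on $\inf u_j$ and combined with $u_j(x_0) \le u(x_0) < \infty$, this shows that the $L^{p-1}_{sp}$ tails of $u_j$ centered at $x_0$ are uniformly bounded in $j$, which in turn bounds $\sup_{B(x_0,r/2)} u_j$ uniformly. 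Chaining finitely many balls covering an arbitrary compact $K \subset \Om$ propagates the bound, so $E=\Om$. Otherwise $u \equiv \infty$ on $\Om$.

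Assuming $E=\Om$, the uniform local boundedness of $\{u_j\}$ together with the interior Hölder regularity of $\LL$-harmonic functions from~\cite{DCKP16} yields local equicontinuity on every compact subset of $\Om$. Combined with monotone pointwise convergence, this upgrades to locally uniform convergence $u_j \nearrow u$ in $\Om$, and in particular $u$ is continuous in $\Om$. The tail bound already established also shows $u \in L^{p-1}_{sp}(\Rn)$, and standard lower semicontinuity of the fractional seminorm combined with a suitable uniform bound (e.g.\ testing $u_j$ against a cut-off on $\E(u_j,u_j)$) yields $u \in \Wsp_{\mathrm{loc}}(\Om)$.

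Finally, to show $u$ is $\LL$-harmonic, I would fix $\phi \in C_c^\infty(\Om)$ and pass to the limit in $\mathcal{E}(u_j,\phi)=0$. Splitting the double integral into the region where both $x,y$ lie in a neighborhood $U \Subset \Om$ of $\supp\phi$ and the complementary region, the first piece is handled by locally uniform convergence of $u_j$ and dominated convergence, using the uniform local $\Wsp$-bound on $u_j$ as a majorant; the second piece is handled by monotone convergence $u_j(y) \nearrow u(y)$ (both factors $u(x)-u(y)$ and $\phi(x)-\phi(y)$ factor nicely when $x \in \supp\phi$ and $y$ is far away) together with the uniform tail bound. The main obstacle will be the passage to the limit in the $(p-1)$-power nonlinearity in the nonlocal portion; this is controlled by the monotonicity of the map $t \mapsto |t|^{p-2}t$ and the uniform tail bound established in the dichotomy step.
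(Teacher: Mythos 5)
The paper does not prove this theorem: it is stated with an explicit attribution to Korvenp\"a\"a--Kuusi--Palatucci~\cite[Theorem~15]{KKP17}, and no proof environment follows. So there is no in-paper argument to compare against, and I evaluate your sketch on its own terms.

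Your opening reduction is where the argument breaks. You propose to ``reduce to $u_j\ge 0$ by subtracting $u_1$.'' But $\mathcal{L}$ is nonlinear for $p\ne 2$, so $u_j-u_1$ is in general \emph{not} $\mathcal{L}$-harmonic; this destroys the hypothesis under which the Harnack inequality applies. The correct manoeuvre is to subtract a constant $m=\inf_{B_{2r}(x_0)}u_1>-\infty$ (finite since $u_1$ is continuous in $\Om$), which preserves $\mathcal{L}$-harmonicity and makes $u_j-m\ge 0$ in $B_{2r}(x_0)$ --- but only there, not globally. Consequently the tail term cannot be dropped, and the inequality you write down is not the nonlocal Harnack inequality of Di~Castro--Kuusi--Palatucci: their tail term involves $(u_j-m)_\limminus$ (the negative part \emph{outside} the ball), not the full $u_j$. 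That negative tail is uniformly controlled because $(u_j-m)_\limminus\le(u_1-m)_\limminus$ by monotonicity and $u_1\in L^{p-1}_{sp}(\R^n)$, and this is what yields the uniform local bound near $x_0$ once $u(x_0)<\infty$. Your subsequent claim that this same Harnack inequality gives a uniform bound on the \emph{positive} tail of $u_j$ does not follow from the inequality you wrote; the uniform positive-tail bound (which is precisely what lets finiteness propagate across disconnected components of $\Om$, and is the reason the statement does not require $\Om$ connected) needs a separate Caccioppoli-type testing argument or the dedicated tail estimate from the Di~Castro--Kuusi--Palatucci framework.

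The final passage to the limit in the weak formulation is fine in spirit: once one has local uniform boundedness, a uniform local $\Wsp$ bound by Caccioppoli, and local uniform convergence (via the H\"older estimate and Dini's theorem), the nonlinear integrand $|u_j(x)-u_j(y)|^{p-2}(u_j(x)-u_j(y))$ converges a.e.\ and is bounded in $L^{p'}$ with respect to $k(x,y)\,dy\,dx$; a.e.\ convergence plus a uniform $L^{p'}$ bound gives weak $L^{p'}$ convergence to the obvious limit, which suffices to pass to the limit against $\phi(x)-\phi(y)$. That part does not really rely on ``monotonicity of $t\mapsto|t|^{p-2}t$'' as you state; it is the standard a.e.-plus-boundedness argument. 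In summary, the overall structure (Harnack dichotomy, H\"older equicontinuity, limit in the weak form) is the right shape, but the reduction step is incorrect as written, the Harnack inequality is misquoted, and the key uniform positive-tail bound is asserted without a valid derivation.
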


\section{\texorpdfstring{$\LL$}{L}-superharmonic functions}\label{sec-superharmonic}

In order to define Perron solutions we need $\LL$-superharmonic functions,
which we next define.
We follow Korvenp\"a\"a--Kuusi--Palatucci~\cite[Definition~1]{KKP17}.

\begin{deff} \label{def:superharmonic}
A measurable function $u: \R^n \to \eR$ is \emph{$\LL$-superharmonic}  in $\Omega$ if it satisfies the following properties:
\begin{enumerate}
\item
$u < \infty$ almost everywhere in $\R^{n}$ and $u>-\infty$ everywhere in $\Omega$,
\item
$u$ is lower semicontinuous  in $\Omega$,
\item
for each open set $G \Subset \Omega$ and each solution $v \in C(\clG)$ of $\mathcal{L}v=0$ 
in $G$ satisfying $v_\limplus \in L^{\infty}(\R^{n})$ and $v \le u$ on  $\Gc$,
it holds that $v \le u$ in $G$,
\item
$u_\limminus \in L^{p-1}_{sp}(\R^{n})$.
\end{enumerate}
A function $u$ is  \emph{$\LL$-subharmonic} in $\Omega$ if $-u$ is $\LL$-superharmonic in $\Omega$.
\end{deff}

\begin{remark}
The measurability assumption in Definition~\ref{def:superharmonic} 
was not explicitly included in~\cite[Definition~1]{KKP17}.
However, it  was used implicitly since 
without measurability, $\chi_E$ would be $\LL$-superharmonic
for every nonmeasurable $E \subset \Om^c$  with zero inner measure,
which is undesirable and would violate parts of Theorem~\ref{thm:KKP17} below.

In \cite{KKP17} it is only required that 
$u \ge v$ everywhere on $\bdy G$ and a.e.\ on $\Rn \setm \clG$,
but since one can modify $v$ arbitrarily on a set of measure zero in
$\Rn \setm \clG$, our definition is equivalent to theirs.
Our formulation will turn out to be convenient later.
\end{remark}

It follows directly that the pointwise minimum of two $\LL$-superharmonic functions
is also $\LL$-superharmonic.
Moreover, a function
is $\LL$-harmonic in $\Omega$ if and only if it  is both $\LL$-superharmonic and 
$\LL$-subharmonic in $\Omega$, see Corollary~7 in~\cite{KKP17}.

The following result from \cite{KKP17}
explains the close connection between supersolutions and $\LL$-superharmonic functions.
Recall that the lsc-regularization $\uhat$ of $u$ was defined in~\eqref{eq-def-uhat}.

\begin{thm} 
\label{thm:KKP17}
\textup{(\cite[Theorems~1, 9 and~12]{KKP17})}
If $u$ is $\LL$-superharmonic  in $\Omega$, then $u$ is lsc-regularized in $\Om$.
If, in addition, $u$ is locally bounded in $\Omega$ or $u \in W^{s, p}_{\mathrm{loc}}(\Omega)$, then it is a  supersolution in $\Omega$.

Conversely, if $u$ is a supersolution in $\Om$, then $\uhat=u$ a.e.\
and $\uhat$ is $\LL$-superharmonic in $\Om$.
\end{thm}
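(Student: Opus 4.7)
The plan is to treat the three assertions separately, following the strategy of Korvenp\"a\"a--Kuusi--Palatucci~\cite{KKP17}, but I will indicate how the pieces fit together using the tools available in the preceding sections.

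\textbf{Part 1: $\LL$-superharmonicity $\Rightarrow$ lsc-regularized.} The inequality $\uhat(x)\ge u(x)$ in $\Om$ is immediate from the pointwise lower semicontinuity of $u$. For the reverse inequality, I would fix $x_0\in\Om$ and a small ball $B=B(x_0,r)\Subset\Om$, set $c=\essliminf_{y\to x_0} u(y)$, and construct, for each $\eps>0$, a bounded continuous solution $v$ in $B$ that lies below $u$ on $B^c$ (after truncating $u$ from below by some $-M$ and replacing it by $\min\{u,c+\eps\}$ outside $B$) and whose value at $x_0$ is at least $c$. Property~(iii) in Definition~\ref{def:superharmonic} then forces $u(x_0)\le v(x_0)\le c+\eps$. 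Letting $\eps\to 0$ gives $u(x_0)\le \uhat(x_0)$.

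\textbf{Part 2: locally bounded or $W^{s,p}_{\loc}$ $\LL$-superharmonic functions are supersolutions.} Fix a smooth ball $B\Subset\Om$. In the locally bounded case, the natural obstacle problem is $\mathcal{K}_{u,u}(B)$, where we use $u$ itself both as obstacle and as exterior datum. Local boundedness and $u_\limminus\in L^{p-1}_{sp}$ ensure that $\mathcal{K}_{u,u}(B)\neq\emptyset$ (e.g.\ after truncating above), so by Theorem~\ref{thm-obst-KL-solv} there exists a solution~$w$. The solution $w$ is a supersolution in~$B$; one checks using Theorem~\ref{thm:KKP17}'s converse direction (or a direct argument) that $\widehat{w}$ is $\LL$-superharmonic and coincides with~$u$ outside~$B$. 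Property~(iii) applied to appropriate continuous test solutions yields $w\le u$ in~$B$, and since $w\ge u$ a.e.\ in~$B$ by definition of $\mathcal{K}_{u,u}(B)$, we obtain $w=u$ a.e.\ in~$B$. Hence $u$ coincides a.e.\ with a supersolution, and $\E(u,\phi)\ge 0$ for $0\le\phi\in C_c^\infty(B)$. Exhausting $\Om$ by such balls gives the supersolution property. The case $u\in W^{s,p}_{\loc}(\Om)$ is handled analogously, using truncations $\min\{u,M\}$, which are locally bounded $\LL$-superharmonic functions by the first case, and then passing to the limit $M\to\infty$ via Lemma~\ref{lem-truncation} and Proposition~\ref{prop-soln-Vsp}.

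\textbf{Part 3: supersolutions have $\LL$-superharmonic lsc-regularizations.} If $u$ is a supersolution, standard De~Giorgi/Moser type arguments (as in Di~Castro--Kuusi--Palatucci~\cite{DCKP16}) produce a lower semicontinuous representative $\widetilde{u}$ with $\widetilde{u}=u$ a.e., and one shows directly that $\widetilde{u}(x)=\essliminf_{y\to x} u(y)=\uhat(x)$ for every $x\in\Om$. It remains to verify the four conditions of Definition~\ref{def:superharmonic} for $\uhat$. The local integrability of $u_\limminus$ at infinity follows from $u\in L^{p-1}_{sp}(\R^n)$ (implicit in the supersolution class); lower semicontinuity in $\Om$ is built into $\uhat$; and property~(iii) is obtained from the comparison principle for $\LL$ (Theorem~\ref{thm-comp} as invoked in~\cite{KKP17}) applied to $\uhat$ against continuous solutions~$v$ with $v\le\uhat$ on $G^c$, after a standard mollification of~$v$ to produce admissible test functions in $\E(\uhat-v,\cdot)\ge 0$.

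\textbf{Main obstacle.} The subtlest step is Part~1, where one must upgrade the pointwise lsc to the essential lsc. The difficulty is purely nonlocal: one cannot read off $\uhat\le u$ from classical mean value or barrier arguments, and must instead build an auxiliary harmonic comparison function on a ball that realizes the value $\essliminf_{y\to x_0} u(y)$ at the center. Constructing this requires control of $u$ in the tail space $L^{p-1}_{sp}(\R^n)$ (via $u_\limminus$) so that the obstacle problem used to produce the comparison solution is well posed, and a careful sup-convolution/mollification argument to turn the a.e.\ inequality $u\ge c$ into pointwise information near $x_0$.
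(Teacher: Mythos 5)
The paper does not prove Theorem~\ref{thm:KKP17}: it is stated verbatim as a citation to Theorems~1, 9 and~12 of Korvenp\"a\"a--Kuusi--Palatucci~\cite{KKP17}, and no proof appears in the LaTeX source. So there is no in-paper argument against which to compare your proposal; what you have written is an independent reconstruction of the~\cite{KKP17} proofs, and the ``approach'' the paper takes is simply to invoke the result from the literature.

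As a reconstruction, your outline is broadly faithful to the~\cite{KKP17} strategy, but two points deserve care. First, the logical order matters: your Part~2 uses the ``converse direction'' (Part~3) to upgrade the obstacle solution $w$ to an $\LL$-superharmonic function. That is not circular, since in~\cite{KKP17} Theorem~1 (your Part~3) is indeed established before Theorem~12 (your Part~2), but you should state the dependency explicitly rather than bury it in a parenthesis. Second, the step in Part~2 where you invoke property~(iii) of Definition~\ref{def:superharmonic} to conclude $w\le u$ in $B$ is not a direct application: $w$ is a solution only on the open set $\{w>u\}\cap B$, which need not be compactly contained in $\Om$, and $w$ need not be continuous on the closure of a suitable test domain. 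Making this comparison rigorous is precisely the technical content of the cited theorems; your sketch names the obstacle but does not resolve it. Given that the paper itself defers entirely to~\cite{KKP17}, the appropriate conclusion here is that your proposal is an honest but incomplete outline of a proof the authors chose not to reproduce, rather than a genuinely alternative argument.
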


The following comparison principle is
essential in this paper.
In  particular, it implies the fundamental inequality
$\lP f \le \uP f$ for Perron solutions.

\begin{thm} \label{thm-comp}
\textup{(Comparison principle, \cite[Theorem~16]{KKP17})}
Assume that $u$ is $\LL$-superharmonic in $\Om$ 
and $v$ is $\LL$-subharmonic in $\Om$.
If $v \le u$ a.e.\ in $\Omc$ and
\begin{equation*}
       \infty \ne  \limsup_{\Om \ni y \to x_0} v(y)
        \le \liminf_{\Om \ni y \to x_0} u(y) \ne -\infty
        \quad \text{for all } x_0 \in \bdy \Om,
\end{equation*}
then $v \le u$ in $\Om$.
\end{thm}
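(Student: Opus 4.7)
The plan is to argue by contradiction, in the spirit of the proof of Theorem~\ref{thm-comp-obs} given earlier. Suppose the conclusion fails, so that $\alpha := \sup_\Om(v-u) > 0$, and fix $\eps \in (0,\alpha)$. Since $v$ is upper semicontinuous and $u$ is lower semicontinuous in $\Om$, the set
\[
    A_\eps := \{y \in \Om : v(y) - u(y) > \eps\}
\]
is open and nonempty. The boundary hypothesis combined with these semicontinuities yields, for each $x_0 \in \bdy \Om$, a neighborhood in which $v - u < \eps$; together with $v \le u$ a.e.\ in $\Om^c$, this forces $\overline{A_\eps} \Subset \Om$. On the compact set $\overline{A_\eps}$, $v$ is bounded above (by usc) and $u$ is bounded below (by lsc); since $u < v < \infty$ on $A_\eps$ and $u > -\infty$ in $\Om$, both $u$ and $v$ are in fact bounded on $\overline{A_\eps}$ by some constant $M$.

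Next I would truncate to obtain supersolutions and subsolutions. Setting $u_M = \min\{u, M+1\}$ and $v_M = \max\{v, -M-1\}$, these functions are locally bounded and $\LL$-superharmonic, respectively $\LL$-subharmonic, in $\Om$. Theorem~\ref{thm:KKP17} then makes $u_M$ a supersolution and $v_M$ a subsolution of $\LL w = 0$ in $\Om$, and they coincide with $u$ and $v$ on $\overline{A_\eps}$. Now pick an open set $G$ with $\overline{A_\eps} \Subset G \Subset \Om$ and a cutoff $\eta \in C_c^\infty(\Om)$ with $0 \le \eta \le 1$ and $\eta \equiv 1$ on $\overline{A_\eps}$. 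Define the test function
\[
    \phi := \eta \cdot \min\{(v_M - u_M - \eps)_limplus, 1\},
\]
with the convention that $\phi = 0$ wherever the subtraction $v_M-u_M$ is ambiguous. Showing that $\phi \in \Vspo(\Om)$ proceeds via Lemmas~\ref{lem-truncation} and~\ref{lem-Vsp-C}, together with the fact that $\phi$ has compact support in $\Om$. Proposition~\ref{prop-soln-Vsp} then yields $\mathcal{E}(u_M, \phi) \ge 0$ and $\mathcal{E}(v_M, \phi) \le 0$, so
\[
    \mathcal{E}(u_M, \phi) - \mathcal{E}(v_M, \phi) \ge 0.
\]

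The contradiction comes from a monotonicity argument. Splitting the domain of integration of $\mathcal{E}(u_M, \phi) - \mathcal{E}(v_M, \phi)$ according to whether $u_M(x) - u_M(y)$ exceeds $v_M(x) - v_M(y)$, and using the strict monotonicity of $\Phi(t) = |t|^{p-2}t$ exactly as in the proof of Theorem~\ref{thm-comp-obs}, one sees that the integrand is pointwise nonpositive and strictly negative on a set of positive $k\,dy\,dx$-measure unless $\phi \equiv 0$. Combined with the preceding inequality, this forces $\phi \equiv 0$, hence $v \le u + \eps$ on $\overline{A_\eps}$, contradicting the definition of $A_\eps$. I expect the main obstacle to be the admissibility check $\phi \in \Vspo(\Om)$: truncating $u$ from above and $v$ from below can destroy the inequality $v_M \le u_M$ on $\Om^c$ in regions where $u$ or $v$ is unbounded there, and it is the cutoff $\eta$, together with the a.e.\ inequality $v \le u$ in $\Om^c$ and the fact that $v_M-u_M\le\eps$ pointwise on $\Om\setm\overline{A_\eps}$, that salvages both the compact support and the fractional regularity of $\phi$.
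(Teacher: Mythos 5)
The paper does not give its own proof of this result: Theorem~\ref{thm-comp} is imported verbatim as \cite[Theorem~16]{KKP17}, so there is no in-paper argument to compare against. Evaluating your attempt on its own terms, the boundary-compactness step and the observation that one may truncate to bounded super/subharmonic functions (which are then super/subsolutions by Theorem~\ref{thm:KKP17}) are fine.

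The monotonicity step, however, has a genuine gap. First, the truncations push in the wrong direction: $u_M=\min\{u,M+1\}\le u$ and $v_M=\max\{v,-M-1\}\ge v$, so $v_M-u_M\ge v-u$ pointwise. Thus $\{v_M-u_M>\eps\}$ can be strictly larger than $A_\eps$; it can extend into $\Om\setm\overline{A_\eps}$ wherever $u>M+1$ (which is allowed, since $u$ is merely finite a.e.\ in $\Om$, not bounded), and into $\Omc$ wherever both $u,v>M+1$ or both $u,v<-M-1$ (compatible with $v\le u$ a.e.). So the claim ``$v_M-u_M\le\eps$ pointwise on $\Om\setm\overline{A_\eps}$'' is false, the function $w=\min\{(v_M-u_M-\eps)_\limplus,1\}$ does \emph{not} vanish outside a compact subset of $\Om$, and the cutoff $\eta$ is essential for $\phi=\eta w\in\Vspo(\Om)$. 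But once $\eta$ is in place, $\phi$ is no longer a nondecreasing function of $v_M-u_M$, so the integrand of $\E(u_M,\phi)-\E(v_M,\phi)$ has no definite sign: for $(x,y)$ with $\eta(x)\ne\eta(y)$, the factor $\phi(x)-\phi(y)=\eta(x)w(x)-\eta(y)w(y)$ bears no sign relation to $(v_M(x)-u_M(x))-(v_M(y)-u_M(y))$, which is exactly what makes the argument of Theorem~\ref{thm-comp-obs} work there (with $\phi=w$ and no cutoff). You would need either a test function that is genuinely a monotone function of $v_M-u_M$ and still has compact support in $\Om$ (which the wrong-way truncations prevent), or a quantitative Caccioppoli-type estimate to absorb the error from $\eta$ --- neither of which is supplied. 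The route actually taken in \cite{KKP17} avoids this direct energy argument and instead exploits the comparison property \textup{(c)} built into Definition~\ref{def:superharmonic} together with Poisson modifications.
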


We will also need the following result about Poisson modifications.

\begin{thm} \label{thm-Poisson}
\textup{(Poisson modification, \cite[Lemma~15 and Theorem~18]{KKP17})}
Let $G \Subset \Om$ be an open subset 
satisfying the following measure
density condition\/\textup:
\begin{equation} \label{eq-meas-dens}
      \inf_{x_0 \in \bdy G}  \inf_{0 < r< r_0}  
\frac{|B(x_0,r) \setm G|}{|B(x_0,r)|} >0 
\quad \text{for some } r_0>0.
\end{equation}

If $u$ is an $\LL$-superharmonic function in $\Om$,
then there is a unique function $\ut_G$, called the Poisson modification of $u$ in $G$, 
such that
$\ut_G$ is  $\LL$-harmonic in $G$,
$\LL$-superharmonic in $\Om$,
$\ut_G \le u$ in $G$ and  $\ut_G=u$ outside $G$.

Moreover, if $u$ and $v$ are $\LL$-superharmonic functions in $\Om$ such that
$u \le v$ in $\R^n$, and $G_1 \subset G_2 \Subset \Om$ are open subsets satisfying
the measure density condition~\eqref{eq-meas-dens}, 
then $\ut_{G_2} \le \vt_{G_1}$ in $\Rn$.
\end{thm}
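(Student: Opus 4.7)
The plan is the standard truncate-and-solve-Dirichlet construction followed by a monotone passage to the limit. For each $k=1,2,\dots$\,, set $u_k:=\min\{u,k\}$. Since constants are $\LL$-harmonic and hence $\LL$-superharmonic, and since the pointwise minimum of two $\LL$-superharmonic functions is again $\LL$-superharmonic, each $u_k$ is $\LL$-superharmonic in $\Om$ and bounded from above. By Theorem~\ref{thm:KKP17}, $u_k$ is then a supersolution, and in particular $u_k\in \Wsp_{\mathrm{loc}}(\Om)\cap L^{p-1}_{sp}(\R^n)$. Choosing an intermediate open set $G\Subset G'\Subset\Om$, the local $\Wsp$-regularity in $G'$ combined with $\dist(G,\bdy G')>0$ and the tail integrability of $u_k$ gives $u_k\in \Vsp(G)$, so the Dirichlet problem in $G$ with exterior data $u_k$ is solvable and produces $h_k:=H_G u_k$, which is $\LL$-harmonic in $G$ and satisfies $h_k\equiv u_k$ on $\Gc$.

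Next I would exploit the measure density condition~\eqref{eq-meas-dens}, which forces the Wiener integral in Theorem~\ref{thm-Wiener} to diverge at every $x_0\in\bdy G$, so every $\bdy G$-point is Sobolev regular for $G$. Approximating the lsc function $u_k$ from below by continuous functions and invoking Theorem~\ref{thm-Sobolev-reg} then gives
\[
\liminf_{G\ni y\to x_0} h_k(y)\ge u_k(x_0) \quad \text{for every } x_0\in\bdy G,
\]
which is exactly the input the comparison principle (Theorem~\ref{thm-comp}) needs to conclude $h_k\le u_k\le u$ in $G$ and $h_k\le h_{k+1}$ in $\R^n$. Set $\ut_G:=\lim_{k\to\infty} h_k$, which equals $u$ on $\Gc$ because $u_k\nearrow u$ there. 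Since $\ut_G\le u<\infty$ a.e.\ in $G$, Harnack's convergence theorem (Theorem~\ref{thm-harnack}) rules out the identically $+\infty$ alternative and delivers $\LL$-harmonicity of $\ut_G$ in $G$.

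The main technical obstacle is verifying that $\ut_G$ is in fact $\LL$-superharmonic in all of $\Om$, not merely $\LL$-harmonic in $G$. Properties (i) and (iv) of Definition~\ref{def:superharmonic} are inherited from $u$ outside $G$ and from local boundedness of the $\LL$-harmonic $\ut_G$ inside $G$. Lower semicontinuity is automatic inside $G$ and on $\Gc\cap\Om$ (where $\ut_G$ coincides with the lsc function $u$); at $x_0\in\bdy G\cap\Om$, the liminf inequality above combined with $\ut_G\ge h_k$ and $u_k(x_0)\nearrow u(x_0)$ yields $\liminf_{\Om\ni y\to x_0}\ut_G(y)\ge u(x_0)=\ut_G(x_0)$. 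For the comparison property (iii), take a test $\LL$-harmonic $v\in C(\overline{G'})$ in some $G'\Subset\Om$ with $v_\limplus$ bounded and $v\le\ut_G$ on $(G')^c$. Then $v\le\ut_G\le u$ on $(G')^c$, and $\LL$-superharmonicity of $u$ gives $v\le u$ in $G'$, hence $v\le u=\ut_G$ on $G'\setm G$; a final application of Theorem~\ref{thm-comp} on the $\LL$-harmonic region $G\cap G'$, with boundary inequalities $v\le\ut_G$ holding both on $G'\setm G$ and on $\bdy G'\subset (G')^c$, yields $v\le \ut_G$ throughout $G'$.

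Uniqueness is then immediate from the comparison principle applied in both directions inside $G$: any other $\ut_G'$ satisfying the stated conclusions coincides with $\ut_G$ on $\Gc$ and is $\LL$-harmonic in $G$, forcing equality. For the monotonicity statement, with $G_1\subset G_2\Subset\Om$ and $u\le v$, decompose $\R^n$: outside $G_2$ one has $\ut_{G_2}=u\le v=\vt_{G_1}$ directly; on $G_2\setm G_1$, $\ut_{G_2}\le u\le v=\vt_{G_1}$ since $\vt_{G_1}=v$ there; and on $G_1$, both $\ut_{G_2}$ and $\vt_{G_1}$ are $\LL$-harmonic, so the boundary inequality on $\bdy G_1$ just obtained together with a final use of Theorem~\ref{thm-comp} delivers $\ut_{G_2}\le \vt_{G_1}$ inside $G_1$.
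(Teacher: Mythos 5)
The paper itself does not prove this theorem from scratch; it simply cites Korvenp\"a\"a--Kuusi--Palatucci~\cite[Lemma~15 and Theorem~18]{KKP17} for the first part, adds the one-line remark that $\LL$-harmonicity in $G$ follows from inspecting their proof, and then deduces the monotonicity $\ut_{G_2}\le\vt_{G_1}$ from the case $G_1=G_2$ by the identity $\ut_{G_2}=(\ut_{G_1})_{G_2}\le\ut_{G_1}\le\vt_{G_1}$. You instead attempt a self-contained reconstruction, which is a genuinely different route, and mostly goes in the right direction — but there is a gap in the very first step.

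The claim that $u_k=\min\{u,k\}\in \Vsp(G)$ does not follow from $u_k\in\Wsp_{\mathrm{loc}}(\Om)\cap L^{p-1}_{sp}(\R^n)$ together with $\dist(G,\bdy G')>0$. The norm $[u_k]_{\Vsp(G)}$ involves $\int_G\int_{\R^n\setm G'}|u_k(x)-u_k(y)|^p|x-y|^{-n-sp}\,dy\,dx$, which for $x\in G$ (where $u_k$ is bounded) is essentially $\int_{\R^n}u_-(y)^p(1+|y|)^{-n-sp}\,dy$; the tail condition $u_\limminus\in L^{p-1}_{sp}(\R^n)$ only controls the $(p-1)$-st power, not the $p$-th. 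Since Definition~\ref{def:superharmonic} places no lower bound on $u$ outside $\Om$, $u_-$ may grow just fast enough to destroy the $p$-integrability, and then $H_G u_k$ is simply not defined. You would need either a further truncation of $u$ from \emph{below} outside $\Om$, or, more in the spirit of \cite{KKP17}, to approximate $u$ from below by an increasing sequence of \emph{bounded} Lipschitz data $\phi_j\nearrow u$ with $\phi_j\le u$ in $\R^n$, solve $H_G\phi_j$ (now legitimately in $\Vsp(G)$), use the measure-density condition and the comparison principle to get $H_G\phi_j\le u$ in $G$ and monotonicity in $j$, and then pass to the limit via Harnack's convergence theorem. One then still has to check that the limit is $\LL$-superharmonic in all of $\Om$, essentially as you do.

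A secondary issue is the final comparison step in the monotonicity proof. Applying Theorem~\ref{thm-comp} in $G_1$ requires $\limsup_{G_1\ni y\to x_0}\ut_{G_2}(y)\le\liminf_{G_1\ni y\to x_0}\vt_{G_1}(y)$ for \emph{all} $x_0\in\bdy G_1$. When $x_0\in\bdy G_1\cap G_2$ this is fine (continuity of $\ut_{G_2}$ there, lsc of $\vt_{G_1}$, plus the pointwise inequality on $G_2\setm G_1$). But the hypothesis is only $G_1\subset G_2$, not $\overline{G_1}\subset G_2$, so $\bdy G_1\cap\bdy G_2$ may be nonempty; there $\ut_{G_2}$ is merely lsc and the $\limsup$ is not controlled by your argument. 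The paper's route $\ut_{G_2}=(\ut_{G_1})_{G_2}\le\ut_{G_1}\le\vt_{G_1}$ sidesteps this entirely and is worth adopting.
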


\begin{proof}
That $\ut_G$ is $\LL$-harmonic in $G$ follows from the proof 
of~\cite[Theorem~18]{KKP17} since therein $P_{u,D} \equiv w$ in $\R^n$.
The rest of the conclusions in the first part were shown 
in~\cite[Theorem~18]{KKP17}.

Finally, the inequality  $\ut_{G_1} \le \vt_{G_1}$ was shown in~\cite[Lemma~15]{KKP17}.
The
general case then follows since
\[
    \ut_{G_2} = (\ut_{G_1})_{G_2} \le \ut_{G_1} \le \vt_{G_1},
\]
by the uniqueness and inequality in the first part of the theorem.
\end{proof}

Also the following pasting lemma will be needed.

\begin{lem} \label{lem-ut-Omc-change}
Let $u$ be an $\LL$-superharmonic function in $\Om$ and $g \in L^{p-1}_{sp}(\R^{n})$.
If $g \le u$ in $\Omc$, then 
\[
    \ut=\begin{cases}
     u & \text{in } \Om, \\
     g & \text{in } \Omc, \\
   \end{cases}
\]
is $\LL$-superharmonic  in $\Om$.
\end{lem}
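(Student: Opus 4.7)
My plan is to verify the four defining conditions of $\LL$-superharmonicity for $\ut$ directly from Definition~\ref{def:superharmonic}, using that $\ut = u$ on $\Om$ (so most conditions involving only $\Om$ are inherited) and exploiting the assumptions $g \in L^{p-1}_{sp}(\R^n)$ and $g \le u$ in $\Omc$ for the remaining conditions that see the complement.

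First I would dispatch the easy conditions. Measurability of $\ut$ follows from measurability of $u$ (built into $\LL$-superharmonicity of $u$) and $g$ (automatic from $g \in L^{p-1}_{sp}(\R^n)$). Condition (i) is immediate: in $\Om$, $\ut = u > -\infty$ everywhere, and $u < \infty$ almost everywhere in $\Om$; on $\Omc$ we have $\ut = g$, and $g \in L^{p-1}_{sp}(\R^n)$ forces $g$ to be finite a.e.\ on $\Omc$, giving $\ut < \infty$ a.e.\ on $\R^n$. Condition (ii) is trivial since $\ut|_\Om = u|_\Om$ is already lsc in $\Om$. For condition (iv), write $\ut_\limminus = u_\limminus \chi_\Om + g_\limminus \chi_{\Omc}$; since $u_\limminus \in L^{p-1}_{sp}(\R^n)$ (from the $\LL$-superharmonicity of $u$) and $g_\limminus \le |g|$ with $g \in L^{p-1}_{sp}(\R^n)$, the sum lies in $L^{p-1}_{sp}(\R^n)$.

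The key step is condition (iii), but it reduces cleanly to the corresponding condition for $u$. Let $G \Subset \Om$ be open and let $v \in C(\clG)$ solve $\LL v = 0$ in $G$ with $v_\limplus \in L^\infty(\R^n)$ and $v \le \ut$ on $G^c$. Since $G \subset \Om$, we have $G^c = (\Om \setm G) \cup \Omc$. On $\Om \setm G$, $\ut = u$, so $v \le u$ there. On $\Omc$, $v \le \ut = g \le u$, where the hypothesis $g \le u$ on $\Omc$ is exactly what is needed. Hence $v \le u$ on $G^c$, and applying condition (iii) for the $\LL$-superharmonic function $u$ in $\Om$ gives $v \le u$ in $G$, i.e.\ $v \le \ut$ in $G$.

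There is essentially no hard step here; the assumption $g \le u$ in $\Omc$ is precisely what transfers the comparison from $\ut$ back to $u$ on $G^c$, and $g \in L^{p-1}_{sp}(\R^n)$ is precisely what is needed to recover the tail condition~(iv). The only point requiring care is to note that the comparison test functions $v$ in Definition~\ref{def:superharmonic}\,(iii) are tested against values of the superharmonic function on the \emph{whole} complement $G^c$, including $\Omc$, which is why the pointwise inequality $g \le u$ on $\Omc$ (not merely $g \le u$ a.e.) is invoked.
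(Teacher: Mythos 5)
Your proof is correct and follows essentially the same route as the paper: conditions (a), (b), (d) of Definition~\ref{def:superharmonic} are checked to be inherited or follow from the tail hypothesis, and condition (c) is obtained by observing that $u \ge \ut \ge v$ on $G^c$ (using $g \le u$ on $\Omc$) and then invoking the $\LL$-superharmonicity of $u$. The paper states the first three verifications as ``clear'' while you spell them out, but the substance is identical.
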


\begin{proof}
That $\ut$ satisfies the properties (a), (b) and (d) in the Definition~\ref{def:superharmonic} of
$\LL$-superharmonic functions is clear.
As for (c), let  
$G \Subset \Omega$ be open and  $v \in C(\clG)$ be a solution of $\mathcal{L}v=0$ 
in $G$ such that $\ut \geq v$ on  $\Gc$ and $v_\limplus \in L^{\infty}(\R^{n})$.
Then $u \ge \ut \ge v$ on $\Gc$ and hence
$\ut=u \ge v$ in $G$, since $u$ is  $\LL$-superharmonic in $\Om$.
Thus (c) holds for $\ut$, and therefore $\ut$ is $\LL$-superharmonic in $\Om$.
\end{proof}

\section{Perron solutions}\label{sec-perron}

Recall Definition~\ref{def-Perron} of Perron solutions from the introduction.

Note that 
if $g$ is nonmeasurable, then the upper Perron solution $\uP g$ 
cannot be $\LL$-harmonic  in $\Om$
because of \eqref{eq-uP=g} and Definition~\ref{def-supersol}.
We can only   show $\LL$-harmonicity
when $g$ belongs to a tail space
in Theorem~\ref{thm-Perron-meas} below.
Thus in our setting, it might be more appropriate to call $\uP g$ an upper Perron function 
rather than an upper Perron solution.
However, we will mainly be interested in measurable functions, so we 
have chosen the more traditional name Perron solutions.
For general functions $g$, we obtain at least the following continuity result.
This is Theorem~\ref{thm-harmonicity}\ref{thm-harmonicity-a}.

\begin{thm} \label{thm-Perron}
Let $g : \Omc \to \eR$. 
Then $\uP g$ is either
continuous in $\Om$ or identically\/ $\pm\infty$ in $\Om$.
In particular, $\uP g$ is always 
$\eR$-continuous in $\Om$.
\end{thm}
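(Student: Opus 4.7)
The approach is to apply Poisson modification (Theorem~\ref{thm-Poisson}) and Harnack's convergence theorem (Theorem~\ref{thm-harnack}) in any ball $B\Subset\Om$ to recover a classical Perron dichotomy locally, and then to use a density/comparison argument to identify $\uP g$ with the limit of the Poisson-modified functions in the finite case. If $\UU_g=\emptyset$, then $\uP g\equiv\infty$ everywhere by the convention $\inf\emptyset=\infty$, so assume $\UU_g\ne\emptyset$. Fix $x_0\in\Om$ and an open ball $B\Subset\Om$ with $x_0\in B$; such balls satisfy the measure density condition~\eqref{eq-meas-dens}. I would first verify that for each $u\in\UU_g$ the Poisson modification $\ut_B$ remains in $\UU_g$: the $\LL$-superharmonicity in $\Om$ is part of Theorem~\ref{thm-Poisson}; conditions~\ref{P-c} and~\ref{P-d} of Definition~\ref{def-Perron} are preserved since $\ut_B\equiv u$ outside $B$ and $B\Subset\Om$; and the lower bound~\ref{P-b} follows from $\ut_B$ being lsc-regularized in $\Om$ (Theorem~\ref{thm:KKP17}) with $\ut_B=u\ge\inf_\Om u$ on $\bdy B$, combined with continuity of $\ut_B$ in $B$ and compactness of $\bdy B$.

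Next I would construct a canonical representative of $\uP g$ in $B$. Enumerate a countable dense subset $\{y_k\}_{k=1}^\infty\subset B$ and pick $u_{k,j}\in\UU_g$ with $u_{k,j}(y_k)\to\uP g(y_k)$ as $j\to\infty$. Setting $v_N:=\min\{u_{k,j}:k,j\le N\}\in\UU_g$ yields a pointwise decreasing sequence in $\UU_g$, and the monotonicity assertion in Theorem~\ref{thm-Poisson} ensures that its Poisson modifications $\vt_N$ in $B$ are also decreasing, remain in $\UU_g$, are $\LL$-harmonic in $B$, and satisfy $\vt_N(y_k)\to\uP g(y_k)$ for every $k$ (since $\uP g\le\vt_N\le v_N$). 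Applying Theorem~\ref{thm-harnack} to $\{-\vt_N\}$ gives the alternative: either $v:=\lim_N\vt_N$ is $\LL$-harmonic in $B$, or $v\equiv-\infty$ in $B$. In the latter case $\uP g\le v\equiv-\infty$ on $B$, so $\uP g$ is identically $-\infty$ in a neighbourhood of $x_0$.

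The main obstacle lies in the first alternative, where the equality $v(y_k)=\uP g(y_k)$ must be upgraded from the countable dense set $\{y_k\}$ to all of $B$. I would argue by contradiction: if $\uP g(y)<v(y)$ at some $y\in B$, pick $w\in\UU_g$ with $w(y)<v(y)$ and repeat the construction with $w_N:=\min(w,v_N)\in\UU_g$ in place of $v_N$. The resulting Poisson-modification limit $w^*$ is $\LL$-harmonic in $B$ (the $\equiv-\infty$ alternative is ruled out since $w^*(y_k)\ge\uP g(y_k)>-\infty$), satisfies $w^*\le v$ in $B$ by the monotonicity of Poisson modification, and agrees with $v$ at every $y_k$. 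Continuity of both $v$ and $w^*$ together with density of $\{y_k\}$ then force $w^*\equiv v$ in $B$, contradicting $w^*(y)\le w(y)<v(y)$. Hence $v\equiv\uP g$ on $B$, so $\uP g$ is $\LL$-harmonic, and in particular continuous, throughout $B$.

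Varying $x_0\in\Om$, the sets $\{\uP g=-\infty\}$ and $\{\uP g\in\R\text{ and continuous}\}$ are both open and cover $\Om$, which immediately yields the $\eR$-continuity of $\uP g$ in $\Om$ and, on each connected component of $\Om$, the stated trichotomy (continuous, $\equiv+\infty$ when $\UU_g=\emptyset$, or $\equiv-\infty$). To pass to the global dichotomy in disconnected $\Om$, one uses that if $\uP g\equiv-\infty$ on a ball $B_2$ in one component, then an analogous construction forces $v_N\in\UU_g$ pointwise to $-\infty$ on a large subset of $B_2$; a further Poisson modification in any ball $B_1$ of a different component transmits this, via the nonlocal tail contribution of the kernel from $B_2$, into $\LL$-harmonic functions in $B_1$ tending to $-\infty$, and since these modifications lie in $\UU_g$ and dominate $\uP g$, the value $-\infty$ propagates throughout $\Om$.
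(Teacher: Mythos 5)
Your overall strategy matches the paper's in its essentials: construct a decreasing minimizing sequence in $\UU_g$ over a countable dense set, replace it by Poisson modifications, and invoke Harnack's convergence theorem (Theorem~\ref{thm-harnack}) to obtain the harmonic/$-\infty$ dichotomy, with the observation that the Poisson modification of an element of $\UU_g$ stays in $\UU_g$. There are two structural differences from the paper's proof. First, you argue locally in individual balls $B\Subset\Om$ and then try to glue, whereas the paper fixes one countable dense set $Z\subset\Om$, takes an exhaustion $G_1\subset G_2\subset\cdots$ of $\Om$ by polyhedra, and builds a single global limit $u$. Second, you prove $v=\uP g$ on all of $B$ by a contradiction argument with a competitor $w$ and a second minimizing sequence, whereas the paper first shows $\uP g$ is upper semicontinuous in $\Om$ (as a pointwise infimum of Poisson-modified functions that are continuous in $G$) and then reads off equality from $\uP g\le u$ together with agreement on the dense set; the latter route avoids the extra minimizing sequence and is cleaner.

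The real gap is in the final paragraph on disconnected $\Om$. You assert that the construction "forces $v_N\in\UU_g$ pointwise to $-\infty$ on a large subset of $B_2$", but what is actually established is only $v_N(y_k)\to-\infty$ on a countable dense (hence Lebesgue-null) subset of $B_2$, which contributes nothing to the tail. What does converge to $-\infty$ everywhere in $B_2$ are the Poisson modifications $\vt_N$, not the $v_N$ themselves, and that replacement needs to be made before the tail estimate can bite. More fundamentally, the propagation argument you sketch is unnecessary: as the paper notes explicitly after Theorem~\ref{thm-Perron-meas}, Harnack's convergence theorem applies to a disconnected $\Om$ — the dichotomy it gives is global, not componentwise. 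The paper exploits this by Poisson-modifying in the global sets $G_j$, so that the $-\infty$ alternative, if it occurs, occurs simultaneously on all of $\Om$ and no gluing across components is needed. If you insist on working with balls, the clean fix is to Poisson-modify once in $B_1\cup B_2$ and apply Theorem~\ref{thm-harnack} to that disconnected open set; your informal "nonlocal tail transmission" is then exactly the content of that theorem, rather than something to be re-derived by hand.
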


The proof shows that 
if $\uP g$ is not identically\/ $\pm\infty$ in $\Om$,
then
$\uP g=u$ in $\Om$ for some 
function $u$ which is $\LL$-harmonic in $\Om$.
However, it is in general only true that $\uP g\le u$ in $\Om^c$ and so $\uP g$ need not
be $\LL$-harmonic in $\Om$.
Indeed, $\uP g$ can never be $\LL$-harmonic in $\Om$
for nonmeasurable $g$, cf.\ \eqref{eq-uP=g} and Proposition~\ref{prop-Perron-nonmeas}.

For measurable $g$ we get the following result,
which in particular relates our Perron solutions to those in 
Korvenp\"a\"a--Kuusi--Palatucci~\cite{KKP17}, cf.\ Theorem~2 therein.
Here, the space $L^{p-1}_{sp}(\Omega^c)$ is defined by replacing 
$\Rn$ in \eqref{eq-tail} by $\Omega^c$.
This contains Theorem~\ref{thm-harmonicity}\ref{thm-harmonicity-b}.

\begin{thm} \label{thm-Perron-meas}
Assume that  $g \in L^{p-1}_{sp}(\Omega^c)$.
Let
\[ 
    \uQ g = \inf_{u \in \UUt_g}  u, 
   \quad  \text{where }
   \UUt_g=\{u \in \UU_g : u =g \text{ on } \Omc\}.
\]
Then the following hold\/\textup:
\begin{enumerate}
\item \label{QQ-a}
$\uP g = \uQ g$ in $\Rn$.
\item \label{QQ-b}
$\uP g$ is either $\LL$-harmonic in $\Om$ or
\[
\uP g \equiv \infty \quad \text{in } \Rn
\qquad \text{or} \qquad 
     \uP g = \begin{cases}
      - \infty & \text{in } \Om, \\
      g & \text{in } \Omc.
   \end{cases}
\]
\item \label{QQ-c}
If $g_1,g_2 \in L^{p-1}_{sp}(\Omega^c)$ and $g_1 \le g_2$ in $\Om^c$,
then $\uQ g_1 \le \uQ g_2$ in $\Rn$.
\end{enumerate}
\end{thm}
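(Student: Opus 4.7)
For part (a), the inclusion $\UUt_g \subset \UU_g$ trivially gives $\uP g \le \uQ g$. For the reverse inequality, I take any $u \in \UU_g$ and form the truncation
\[
\ut(x) = \begin{cases} u(x), & x \in \Om, \\ g(x), & x \in \Omc. \end{cases}
\]
Conditions \ref{P-b}--\ref{P-d} of Definition~\ref{def-Perron} transfer directly from $u$ to $\ut$, and the identity $\ut = g$ on $\Omc$ needed for membership in $\UUt_g$ is built into the construction. What remains is the $\LL$-superharmonicity of $\ut$ in $\Om$, which Lemma~\ref{lem-ut-Omc-change} supplies: the hypothesis $g \in L^{p-1}_{sp}(\Rn)$ appearing there is used only to guarantee $\ut_\limminus \in L^{p-1}_{sp}(\Rn)$, and this survives here because $g \in L^{p-1}_{sp}(\Omc)$ together with condition \ref{P-b} on the bounded set $\Om$ controls the two pieces of $\ut_\limminus$ separately. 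Hence $\ut \in \UUt_g$, and the pointwise bound $\ut \le u$ on $\Rn$ yields $\uQ g \le u$; taking the infimum over $\UU_g$ closes the argument.

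Part (c) is then immediate from part (a) combined with the monotonicity \eqref{eq-Pg1-Pg2} of $\uP$. For part (b), Theorem~\ref{thm-Perron} already provides that $\uP g$ is continuous in $\Om$ or identically $\pm \infty$ there. If $\uP g \equiv \infty$ in $\Om$ then $\UU_g$ must be empty, since any element is finite a.e.\ in $\Om$, giving $\uP g \equiv \infty$ in $\Rn$ by the convention $\inf \emptyset = \infty$. If $\uP g \equiv -\infty$ in $\Om$, then part (a) together with \eqref{eq-uP=g} forces $\uP g = g$ on $\Omc$, producing the second exceptional alternative.

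In the remaining case $\uP g$ is continuous and real-valued in $\Om$, and the task is to prove $\LL$-harmonicity in every ball $B \Subset \Om$. I would fix such a $B$, pick a countable dense set $\{y_k\} \subset B$, and for each $k$ select $u_{k,j} \in \UUt_g$ with $u_{k,j}(y_k) \to \uP g(y_k)$ as $j \to \infty$. The diagonal minima $w_j := \min\{u_{k,i} : 1 \le k,i \le j\}$ still lie in $\UUt_g$ (pointwise minima preserve $\LL$-superharmonicity and all the conditions defining $\UUt_g$, including the exterior identity on $\Omc$) and form a decreasing sequence. Replacing $w_j$ by its Poisson modification $\vt_j := (w_j)_B$ from Theorem~\ref{thm-Poisson} keeps $\vt_j \in \UUt_g$, since $\vt_j = w_j$ outside $B$ preserves the exterior identity and the boundary/below bounds, makes $\vt_j$ $\LL$-harmonic in $B$, and by the monotonicity clause of Theorem~\ref{thm-Poisson} keeps the sequence decreasing on $\Rn$. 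Harnack's convergence theorem (Theorem~\ref{thm-harnack}) applied to the increasing $\{-\vt_j\}$ yields a limit $v$ that is either $\LL$-harmonic in $B$ or identically $-\infty$ there; the latter is ruled out by $v \ge \uP g > -\infty$. At each $y_k$ one has $v(y_k) \le \lim_j w_j(y_k) \le \lim_j u_{k,j}(y_k) = \uP g(y_k)$ and $v \ge \uP g$ throughout $B$, so continuity promotes equality on the dense set to $v \equiv \uP g$ on $B$, proving $\uP g$ is $\LL$-harmonic in $B$ and hence in $\Om$.

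The step I expect to be the main technical obstacle is verifying that the Poisson modifications $\vt_j$ genuinely lie in $\UUt_g$ — in particular that they remain bounded below in $\Om$ after the modification inside $B$ — and that the sequence $\{\vt_j\}$ is decreasing on all of $\Rn$; both of these rely on a careful application of the existence and monotonicity parts of Theorem~\ref{thm-Poisson}, which themselves rest on the nonlocal comparison principle (Theorem~\ref{thm-comp}) that underlies essentially every step of this Perron-type argument.
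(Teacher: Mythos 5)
Parts (a) and (c) of your proof are correct and coincide with the paper's; your extra care with the $L^{p-1}_{sp}(\Rn)$ versus $L^{p-1}_{sp}(\Omc)$ hypothesis in Lemma~\ref{lem-ut-Omc-change} is justified, since that hypothesis enters only through property (d) of Definition~\ref{def:superharmonic}, which (as you say) depends only on $g|_{\Omc}$ and the boundedness of~$\Om$.

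In part (b) the $\LL$-harmonicity step has a genuine gap. You fix a ball $B\Subset\Om$, take the dense set only inside $B$, Poisson-modify the decreasing minimizing sequence inside $B$, and obtain a limit $v$ that is $\LL$-harmonic in $B$ and equals $\uP g$ in $B$ and on $\Omc$. But on $\Om\setm B$ you only know $v=\lim_j w_j$, which may strictly exceed $\uP g$ there: the $u_{k,j}$ were chosen to minimize only at points of $B$, and even if you enlarged the dense set to all of $\Om$, the limit on $\Om\setm B$ is a decreasing limit of lower semicontinuous functions, not continuous, so the dense-set-plus-continuity identification is unavailable outside $B$. Since $\LL$ is nonlocal, the weak identity $\E(\uP g,\phi)=0$ for $\phi\in C_c^\infty(B)$ depends on the values of $\uP g$ on all of $\Rn$; thus ``$v$ is $\LL$-harmonic in $B$ and $v=\uP g$ on $B\cup\Omc$'' does not imply that $\uP g$ is $\LL$-harmonic in $B$. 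The paper avoids this by Poisson-modifying over an exhaustion $G_1\subset G_2\subset\cdots$ of $\Om$ (exactly as in the proof of Theorem~\ref{thm-Perron}, but now with $u_{j,k}\in\UUt_g$): the resulting limit is $\LL$-harmonic or $\equiv-\infty$ in every $G_i$, hence in $\Om$, and coincides with $\uP g$ everywhere in $\Rn$ — on a dense subset of $\Om$ by construction, on all of $\Om$ by the continuity from Theorems~\ref{thm-Perron} and~\ref{thm-harnack}, and on $\Omc$ because each $u_{j,k}$ equals $g$ there. The issues you flag at the end (membership of the Poisson modifications in $\UUt_g$ and their monotonicity) are handled by Theorem~\ref{thm-Poisson} exactly as you expect and are not the real obstacle.
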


In the local setting, \ref{QQ-b} requires that $\Om$ is connected.
This is not needed here because of the nonlocal nature of the operator $\LL$.
Similarly, $\Om$ is not required to be connected in 
Harnack's convergence theorem (Theorem~\ref{thm-harnack}).

Part~\ref{QQ-c} follows directly from \ref{QQ-a} together with 
\eqref{eq-Pg1-Pg2}.
Note however that this fundamental inequality 
did not appear
in~\cite{KKP17}.

\begin{proof}[Proof of Theorem~\ref{thm-Perron}]
If $\UU_g=\emptyset$, then $\uP g \equiv \infty$.
Assume therefore that $\UU_g \ne \emptyset$.
Let $G \Subset \Om$ be an open polyhedron.
Then $G$ satisfies the measure density condition~\eqref{eq-meas-dens}.

Let $v \in \UU_g$ be arbitrary
and let $\vt_G$ be the Poisson modification of $v$ in $G$ as given by
Theorem~\ref{thm-Poisson}.
As $\vt_G \le v$ in $G$ and $\vt_G = v$ in $G^c$,
we see that
$\uP g = \inf_{v\in\UU_g} \vt_G$.
Moreover, each $\vt_G$ is $\LL$-harmonic and thus continuous in $G$.
Hence the upper Perron solution $\uP g$ is upper 
semicontinuous in $G$.
Since $G \Subset \Om$ was an arbitrary polyhedron, it follows that $\uP g$ is upper
semicontinuous in $\Om$.

Let $Z = \{z_1, z_2, \ldots\}$ be a countable dense subset of $\Om$ and
for each $j=1,2,\ldots$\,, find $\LL$-superharmonic functions 
$u_{j,k} \in \UU_g$  so that
$
\lim_{k\to\infty} u_{j,k} (z_j) = \uP g (z_j).
$
As the minimum of two $\LL$-superharmonic functions is also 
$\LL$-superharmonic, 
the sequence 
\[
u_j=\min_{i,k\le j} u_{i,k}
\]
belongs to $\UU_g$, is pointwise decreasing  and has limit
\[
\lim_{j\to\infty} u_{j} (z) = \uP g (z) \quad \text{for all }z\in Z.
\]

Next let $G_1 \subset G_2 \subset \dots \subset \Om = \bigcup_{i=j}^\infty G_j$
be an exhaustion of $\Om$ by open polyhedra.
Let $\ut_j=\ut_{j,G_j}$ be the Poisson modification of $u_j$ in $G_j$ according to
Theorem~\ref{thm-Poisson}.
Then $\ut_j$ is $\LL$-harmonic in $G_j$, $\LL$-superharmonic in $\Om$
and $\{\ut_j\}_{j=1}^\infty$ is a decreasing sequence.
Let $u = \lim_{j\to\infty} \ut_j$.
As $\uP g \le \ut_j$ for every
$j$, we get that $\uP g \le u$ in $\R^n$.
At the same time, we have that 
\[
\uP g(z) = \lim_{j\to\infty} u_j(z) 
      \ge \lim_{j\to\infty} \ut_{j}(z) = u(z) 
\quad \text{for all } z \in Z,
\]
i.e.\ $u = \uP g$ on $Z$.
Moreover, 
applying 
Harnack's convergence theorem (Theorem~\ref{thm-harnack})
to $\{-\ut_j\}_{j=1}^\infty$
shows that $u$ is $\LL$-harmonic or identically $-\infty$ in every $G_j$ and thus
in~$\Om$.

If $u \equiv -\infty$ in $\Om$, then also $\uP g \equiv -\infty$ in $\Om$.
Otherwise, $u$ is continuous in $\Om$ and using 
the upper semicontinuity of $\uP g$, we find that
\[
    u(x) \ge \uP g(x) \ge \limsup_{Z \ni z \to x} \uP g(z)
     = \limsup_{Z \ni z \to x} u(z) = u(x)
     \quad \text{for } x \in \Om,
\]
i.e.\ $\uP g=u$ is continuous 
in $\Om$.
\end{proof}

\begin{proof}[Proof of Theorem~\ref{thm-Perron-meas}]
\ref{QQ-a}
For each  $u \in \UU_g$, let 
\[
    \ut=\begin{cases}
     u & \text{in } \Om, \\
     g & \text{in } \Omc, \\
   \end{cases} 
\]
which is $\LL$-superharmonic in $\Om$, by Lemma~\ref{lem-ut-Omc-change}.
Since $u \in \UU_g$, it is straightforward to see that $\ut \in \UUt_g$
and that $\ut \le u$ in $\Rn$.
It follows
that 
\[ 
    \uP g 
  = \inf_{u \in \UU_g}  u
  = \inf_{u \in \UU_g}  \ut 
  = \inf_{v \in \UUt_g}  v
  = \uQ g.
\]

\ref{QQ-b}
If  $\UUt_g = \emptyset$, then $\UU_g = \emptyset$ (by the proof of \ref{QQ-a}) 
and thus
$\uP g \equiv \infty$ in $\Rn$.
So assume that $\UUt_g \ne \emptyset$.
Since $\uP g = \uQ g$ by \ref{QQ-a}, 
we can rerun the proof of Theorem~\ref{thm-Perron} with $u_{j,k} \in \UUt_g$.
Let $u$ be the function thus constructed.
Then $u=\uP g$ in $\Om$ and $u$ is 
$\LL$-harmonic or identically $-\infty$ in $\Om$,
by the proof of Theorem~\ref{thm-Perron}.
Moreover, each $u_{j,k}=g$ in $\Omc$, and thus also $u=g=\uP g$
in $\Omc$, i.e.\ $u=\uP g$ in $\R^n$.
Hence $\uP g$ is 
$\LL$-harmonic or identically $-\infty$ in $\Om$,
which together with \eqref{eq-uP=g} completes the proof of \ref{QQ-b}.

\ref{QQ-c}
This follows directly from \ref{QQ-a} together with 
\eqref{eq-Pg1-Pg2}.
\end{proof}

It is easy to see that every $g:\Omc \to \eR$ has an a.e.-unique 
\emph{minimal measurable majorant} 
$\gt:\Omc \to \eR$ in the sense that
$\gt\le f$ a.e.\ whenever $f\ge g$ is  measurable.
Indeed, for a bounded function $g$ and a bounded measurable set $E\subset\Om^c$, 
consider a minimizing sequence $\{f_j\}_{j=1}^\infty$ for the functional
$f \mapsto \int_{E} f\,dx$
in the class of
all measurable functions $f\ge g$, and set $\gt : =\inf_j f_j$ in $E$.
Then cover $\Om^c$ by countably many pairwise disjoint bounded measurable sets
and approximate general $g$ by their bounded truncations from above and below.

If $|\bdy \Om|=0$, then we can always find a minimal measurable majorant $\gt$ of
$g$ such that $\gt=g$ on $\bdy \Om$.
When $|\bdy \Om|>0$ this is not possible for every $g$.
Nevertheless the following result is relevant also when $|\bdy \Om|>0$.

\begin{prop} \label{prop-Perron-nonmeas}
Let $g:\Omc \to \eR$
and assume that $\gt$ is
a minimal measurable majorant of $g$
such that $\gt=g$ on $\bdy \Om$.
Then $\uP g = \uP \gt$ in $\Om$.
\end{prop}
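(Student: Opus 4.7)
The plan is to establish the two inequalities $\uP g \le \uP \gt$ and $\uP \gt \le \uP g$ in $\Om$ separately. The first follows from the inclusion $\UU_\gt \subset \UU_g$: any $u \in \UU_\gt$ satisfies $u \ge \gt \ge g$ in $\Omc$, and since $\gt(x) = g(x)$ for $x \in \bdy \Om$, the $\liminf$ condition~\ref{P-c} with respect to $\gt$ transfers verbatim to $g$. Taking infima then yields $\uP g \le \uP \gt$ in all of $\R^n$.

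For the reverse inequality, given an arbitrary $u \in \UU_g$, the strategy is to produce a function $u' \in \UU_\gt$ with $u' = u$ in $\Om$; taking the infimum over $u \in \UU_g$ will then give $\uP \gt \le \uP g$ in $\Om$. Define
\[
u'(x) = \begin{cases} u(x), & x \in \Om, \\ \max\{u(x), \gt(x)\}, & x \in \Omc. \end{cases}
\]
Since $u$ is $\LL$-superharmonic, it is measurable on $\R^n$, so $u|_{\Omc}$ is a measurable majorant of $g$. Minimality of $\gt$ then forces $\gt \le u$ a.e.\ in $\Omc$, whence $u' = u$ a.e.\ in $\R^n$ and $u' = u$ pointwise in $\Om$. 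Condition~\ref{P-d} for $u' \in \UU_\gt$ holds by construction, \ref{P-b} is inherited from $u$, and~\ref{P-c} follows from $u' = u$ in $\Om$ together with $\gt = g$ on $\bdy \Om$.

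The main obstacle is verifying that $u'$ remains $\LL$-superharmonic in $\Om$, since the construction enlarges $u$ on a possibly nonempty subset of $\Omc$. Lower semicontinuity in $\Om$, a.e.\ finiteness on $\R^n$, and $u'_\limminus \in L^{p-1}_{sp}(\R^n)$ are all inherited from $u$ (the last using $u' \ge u$ in $\Omc$, so $u'_\limminus \le u_\limminus$ there). The substantive step is comparison property (c) of Definition~\ref{def:superharmonic}: for $G \Subset \Om$ and an admissible test function $v \in C(\clG)$ with $v_\limplus \in L^\infty(\R^n)$ and $v \le u'$ on $\Gc$, one has $v \le u' = u$ pointwise on $\bdy G \subset \Om$, while on $\R^n \setm \clG$ the pointwise inequality $v \le u'$ combined with $u' = u$ a.e.\ (in $\Omc$ by the minimality argument above, in $\Om \setm \clG$ trivially) gives $v \le u$ a.e.\ on $\R^n \setm \clG$. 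Invoking the a.e.-formulation of (c) recorded in the remark after Definition~\ref{def:superharmonic}, the $\LL$-superharmonicity of the original $u$ yields $v \le u = u'$ in $G$, which completes the verification that $u' \in \UU_\gt$.
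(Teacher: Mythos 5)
Your proof is correct, and it takes a genuinely more direct route than the paper's. The key modification is the same: replacing $u$ on $\Omc$ by $\max\{u,\gt\}$, using minimality of $\gt$ and measurability of $u$ to see that this changes $u$ only on a null set of $\Omc$, and observing that $\gt=g$ on $\bdy\Om$ preserves condition~\ref{P-c}. The difference lies in how the inequality $\uP\gt\le\uP g$ in $\Om$ is extracted. The paper first builds a specific decreasing sequence $u_j\in\UU_g$ with $u_j(z)\to\uP g(z)$ on a countable dense set $Z\subset\Om$ (as in the proof of Theorem~\ref{thm-Perron}), modifies each $u_j$ to $\ut_j\in\UU_\gt$, obtains $\uP g=\uP\gt$ on $Z$, and then invokes the continuity of both Perron solutions from Theorem~\ref{thm-Perron} to extend the equality to all of $\Om$. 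You instead perform the modification on an arbitrary $u\in\UU_g$, verify carefully that the resulting $u'$ is $\LL$-superharmonic in $\Om$ (which the paper leaves implicit when asserting $\ut_j\in\UU_\gt$) by appealing to the a.e.-formulation of the comparison property in Definition~\ref{def:superharmonic}, and then take the infimum directly. This bypasses the dense-set/continuity machinery entirely, at the cost of spelling out the superharmonicity check that the paper elides. Both proofs are sound; yours is arguably the cleaner argument, and it also makes explicit the one nontrivial verification that the paper glosses over.
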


\begin{proof}
Let $Z = \{z_1, z_2, \ldots\}$ be a countable dense subset of $\Om$.
As in the proof of Theorem~\ref{thm-Perron}, we can construct 
a decreasing sequence of functions $u_j \in \UU_g$
such that
\[
\lim_{j\to\infty} u_{j} (z) = \uP g (z) \quad \text{for all }z\in Z.
\]
Since each $u_j$ is measurable, we see that $u_j \ge \gt$ a.e.\ in $\Omc$.
Let 
\[
     \ut_j=\begin{cases}
        u_j & \text{in } \Om, \\
        \max\{u_j,\gt\} & \text{in } \Omc.
    \end{cases}
\]
As $\gt=g$ on $\bdy \Om$, it follows that 
$\ut_j \in \UU_{\gt}$.
Hence we get  that 
\[
\uP g(z) = \lim_{j\to\infty} \ut_{j} (z) \ge  \uP \gt (z) \ge
\uP g(z) \quad \text{for all }z\in Z,
\]
i.e.\ $\uP g=\uP \gt$ in $Z$.
Since both $\uP g$ and $\uP \gt$ are $\eR$-continuous in $\Om$, by 
Theorem~\ref{thm-Perron}, we see that $\uP g=\uP \gt$ in $\Om$.
\end{proof}

We next prove resolutivity results and the
equality between Perron and Sobolev solutions 
for a large class of boundary data.
This result contains Theorem~\ref{thm-main-Hg=Pg}
as an important special case.

\begin{thm}   \label{thm-Hg=Pg}
Let $g \in \VspOm$ be such that either $g \in C(\Rn)$,
or $g$ is bounded in $\Rn$ and
continuous at all points in $\bdy \Om \setm E$
for some set $E$ with $\Csp(E)=0$.
Also let $f=g+h$, where  $h:\R^n \to \eR$ and 
$h=0$ in $\Ec$.
Then 
\[
     Pg =Hg
\quad \text{and} \quad
     Pf = Hf = Pg + h\chi_{\Omc}.
\]
In particular, $f$ and $g$ are resolutive and $Pf$ and $Pg$ are  $\LL$-harmonic
in $\Om$.
\end{thm}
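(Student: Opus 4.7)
My plan is to split the proof into three stages: first prove $Pg = Hg$ in the case $g \in V^{s,p}(\Omega) \cap C(\R^n)$; then extend to the second case for $g$ by a sandwich argument with continuous approximants; and finally handle the perturbation by $h$, using that $h$ is supported on a set of zero Sobolev capacity.

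\paragraph{Step 1: $Pg = Hg$ for $g \in V^{s,p}(\Omega) \cap C(\R^n)$.}
Since $\underline{P}g \leq \bar{P}g$ by \eqref{eq-lP<=uP}, it suffices to establish $\bar{P}g \leq Hg \leq \underline{P}g$ in $\Omega$. For the first inequality I will exhibit, for each $j$, a function $\tilde u_j \in \mathcal{U}_g$ such that $\tilde u_j \to Hg$ in $\Omega$. By the Kellogg property (Theorem~\ref{thm-kellogg}), $C_{s,p}(I_\Omega) = 0$, and via Lemma~\ref{lem-Wsp-Csp}, Definition~\ref{def-pot} and Harnack's convergence theorem (Theorem~\ref{thm-harnack}), one builds as a monotone limit of lsc-regularized $\mathcal{L}$-potentials of compact sets approximating $I_\Omega$ a sequence of non-negative $\mathcal{L}$-superharmonic functions $v_j \in V^{s,p}(\R^n)$ with $v_j = +\infty$ on $I_\Omega$ (in the lsc sense) and $\|v_j\|_{V^{s,p}(\R^n)} \to 0$. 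Set $g_j = g + v_j \in V^{s,p}(\Omega)$; then $g_j \searrow g$ quasi-everywhere and $g_j \to g$ in $V^{s,p}(\Omega)$. Define $\tilde u_j = Hg_j$ in $\Omega$ and $\tilde u_j = g_j$ in $\Omega^c$; by Lemma~\ref{lem-ut-Omc-change} this is $\mathcal{L}$-superharmonic in $\Omega$. To verify $\tilde u_j \in \mathcal{U}_g$, the key point is the liminf condition on $\partial\Omega$: at Sobolev regular points $x \in \partial\Omega \setminus I_\Omega$, $\lim_{y\to x} Hg_j(y) = g_j(x) \geq g(x)$; at $x \in I_\Omega$, lsc of $v_j$ with $v_j(x) = +\infty$ forces $\liminf_{y \to x} \tilde u_j(y) = +\infty \geq g(x)$. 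Hence $\bar{P}g \leq Hg_j$ in $\Omega$, and Theorem~\ref{thm-conv-obst-prob} gives $Hg_j \searrow Hg$ pointwise in $\Omega$, yielding $\bar{P}g \leq Hg$. A symmetric argument using $g - v_j$ and the lower class gives $Hg \leq \underline{P}g$.

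\paragraph{Step 2: General $g$ and the perturbation by $h$.}
In the second case for $g$, the capacity-zero set $E$ plays the role of $I_\Omega$: one constructs bounded approximants $g^\pm_j \in V^{s,p}(\Omega) \cap C(\R^n)$ with $g^-_j \leq g \leq g^+_j$ and $g^\pm_j \to g$ quasi-everywhere and in $V^{s,p}(\Omega)$, by adding small multiples of capacity potentials of $E$ to bounded continuous truncations of $g$. The monotonicity~\eqref{eq-Pg1-Pg2}, Step~1 and Theorem~\ref{thm-conv-obst-prob} then squeeze $\underline{P}g = \bar{P}g = Hg$. For the perturbation $f = g + h$ with $h = 0$ in $E^c$, Sobolev functions are determined up to $C_{s,p}$-zero sets, so $Hf = Hg$ in $\Omega$ while $Hf = f$ in $\Omega^c$; this gives $Hf = Pg + h\chi_{\Omega^c}$. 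On the Perron side, any $u \in \mathcal{U}_g$ produces $u' \in \mathcal{U}_f$ by setting $u' = u + h\chi_E$, noting that modifying an $\mathcal{L}$-superharmonic function on the polar set $E \cap \Omega^c$ (even to $+\infty$) preserves $\mathcal{L}$-superharmonicity by Lemma~\ref{lem-ut-Omc-change}, does not affect the liminf condition at boundary points, and gives the majorization $u' \geq f$ in $\Omega^c$. The reverse correspondence works the same way, so the infima defining $\bar{P}g$ and $\bar{P}f$ coincide in $\Omega$, yielding $Pf = Pg + h\chi_{\Omega^c}$.

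\paragraph{Main obstacle.}
I expect the main difficulty to be Step~1: the delicate construction of the barriers $v_j$ with the right simultaneous properties. Nonlinearity of $\mathcal{L}$ means sums and scalar multiples of superharmonic functions need not be superharmonic, so $v_j$ must genuinely be built by monotone limits of $\mathcal{L}$-potentials rather than by linear combinations; and ensuring that $v_j = \infty$ on $I_\Omega$ is compatible with $v_j \to 0$ in $V^{s,p}(\R^n)$ requires carefully tying the capacity estimates from Section~\ref{sec-capacity} to the convergence theorem for obstacle problems. Once these barriers are in hand, the remainder of the argument is a controlled application of the comparison principles, Harnack's convergence, and Theorem~\ref{thm-conv-obst-prob}.
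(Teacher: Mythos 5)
Your Step~1 contains a genuine gap. You define $\tilde u_j = Hg_j$ in $\Om$ (and $= g_j$ in $\Om^c$) and then claim that at an irregular point $x\in I_\Om$ ``lsc of $v_j$ with $v_j(x)=+\infty$ forces $\liminf_{y\to x}\tilde u_j(y)=+\infty$.'' This is false: inside $\Om$, $\tilde u_j$ is the Sobolev--Dirichlet solution $Hg_j$, and $Hg_j$ does \emph{not} inherit the blowup of the exterior data $v_j$ at a set of zero capacity. Precisely because $\Csp(I_\Om)=0$, modifying boundary data on $I_\Om$ leaves the Sobolev solution unaffected (indeed $\|v_j\|_{V^{s,p}}\to 0$ means $Hg_j$ stays close to $Hg$, not that it diverges near $I_\Om$). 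Thus you have not verified condition~\ref{P-c} of Definition~\ref{def-Perron} and cannot conclude $\tilde u_j\in\UU_g$. The paper circumvents exactly this by \emph{not} solving a Dirichlet problem: it takes $u_j$ to be the lsc-regularized solution of the \emph{obstacle} problem $\K_{f_j,f_j}(\Om)$ with $f_j=Hf+\phi_j$, where $\phi_j=\sum_{i\ge j}\psi_i\in\Wsp(\Rn)$ (built from Lemma~\ref{lem-Wsp-Csp}, \emph{not} required to be superharmonic). The obstacle forces $u_j\ge\phi_j\ge k+1$ a.e.\ on the open set $\Om\cap\bigcap_{i=j}^{j+k}G_i$, and the lsc-regularization then yields $\lim_{\Om\ni x\to x_0}u_j(x)=\infty$ for $x_0\in I_\Om$, giving membership in $\UU_f$. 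The superharmonicity of $u_j$ comes for free from Theorem~\ref{thm:KKP17} applied to obstacle-problem solutions, so there is no need (and no way, due to nonlinearity) to make the perturbations $\phi_j$ themselves superharmonic, which is the construction you were trying to carry out with your $v_j$.

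A secondary issue: in Step~2 you propose $u'=u+h\chi_E$ to transfer between $\UU_g$ and $\UU_f$, but $h:\Rn\to\eR$ may take infinite values, so $u'$ need not map into $(-\infty,\infty]$, and when $h$ is $-\infty$ somewhere the construction breaks outright. The paper avoids this by running the whole obstacle-problem argument directly for $f=g+h$ (using $|E|=0$ to get $f\in\VspOm$), obtaining $\uP f\le Hf$ a.e.\ from Theorem~\ref{thm-conv-obst-prob}, upgrading to an everywhere inequality via the $\eR$-continuity in Theorem~\ref{thm-Perron}, and finally applying the same to $-f$; the identity $Pf=Pg+h\chi_{\Omc}$ then drops out because $Hf=Hg$ in $\Om$ while $Pf=f$ and $Pg=g$ in $\Omc$.
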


\begin{proof}
Assume to start with that $g \ge 0$.
Let $I_\Om$ be the set of Sobolev irregular boundary points for $\Om$.
By the Kellogg property (Theorem~\ref{thm-kellogg}), 
$\Csp(I_\Om)=0$.
Let $E'=I_\Om \cup E$.
Then also $\Csp(E')=0$.
By the definition of $\Csp(E')$,  there are open sets
$G_j\supset E'$ such that $\Csp(G_j)< 2^{-jp}$, $j=1,2,\ldots$\,.
For each $G_j$, it follows from Lemma~\ref{lem-Wsp-Csp} that
there is 
a function $\psi_j\ge0$ such that
$\psi_j= 1$ on $G_j$ and $\|\psi_j\|_{\Wsp(\R^n)}< 2^{-j}$.
Let 
\[
\phi_j= \sum_{i=j}^{\infty} \psi_i.
\]
Note that $\phi_j=\infty$ on $E'$, 
$\|\phi_j\|_{\Wsp(\R^n)}< 2^{1-j}$,
and $\phi_j(x) \searrow 0$ for a.e.\ $x \in \R^n$.

Since $|E|=0$ by Lemma~\ref{lem-zero-cap}, we see that $f \in \VspOm$. 
Let $f_j= Hf+\phi_j$ and let $u_j$ be the lsc-regularized solution 
of the $\K_{f_j,f_j}(\Om)$-obstacle problem.
Then each $u_j$ is $\LL$-superharmonic in $\Om$ by Theorem~\ref{thm:KKP17}.
Moreover, $(f-f_j)_\limplus \in \VspoOm$ by Corollary~\ref{cor-police}
since $0\leq (f-f_j)_\limplus \leq (f-Hf)_\limplus \in \VspoOm$, 
by Lemma~\ref{lem-Vspo-limplus}.
It follows from the comparison principle for obstacle problems
 (Theorem~\ref{thm-comp-obs}) that
$u_j \ge Hf =Hg$.
Furthermore, if $x_0 \in \bdy \Om \setm E'$, then 
$x_0$ is Sobolev regular and thus 
$\liminf_{\Om\ni x \to x_0} Hg(x) = g(x_0)$, 
by the definition of Sobolev regularity if $g \in C(\Rn)$ 
and by Theorem~\ref{thm-Sobolev-reg}\ref{a-contx0} if $g$ is bounded 
in $\Rn$ and
continuous at $x_0$.
Hence
\[
\liminf_{\Om\ni x\to x_0} u_j(x) \ge \liminf_{\Om\ni x\to x_0} Hg(x) 
=g(x_0)=f(x_0)
\quad \text{for all } x_0\in \bdy \Om \setm E'.
\]
On the other hand, $u_j \ge f_j \ge \phi_j \ge k+1$ a.e.\ in 
the open set $\Om \cap  \bigcap_{i=j}^{j+k} G_{i}$ and thus 
$u_j \ge k+1$ everywhere therein,
since $u_j$ is lsc-regularized.
Letting $k \to \infty$ therefore shows that
\[
\lim_{\Om\ni x\to x_0} u_j(x) = \infty \quad \text{for all } x_0\in \bdy\Om \cap E'.
\]
Hence
\[
\liminf_{\Om\ni x\to x_0} u_j(x) \ge f(x_0) \quad \text{for all } x_0\in \bdy \Om.
\]
Thus $u_j \in \UU_f$ and
$u_j\ge \uP f$ in $\R^n$. 
Next note that $Hf$ clearly is a solution of the 
$\K_{Hf,Hf}(\Om)$-obstacle problem.
It thus follows from Theorem~\ref{thm-conv-obst-prob}
that  $u_j\to Hf$ a.e.\ in $\Om$.
Hence $Hf\ge \uP f$ a.e.\ in $\Om$.

For arbitrary $g$ and $f=g+h$ we have by the above and the last part of
Theorem~\ref{thm-conv-obst-prob} that
\begin{equation} \label{eq-lim-max-f,m}
\uP f  \le \lim_{m\to-\infty} \uP \max\{f,m\}
 \le \lim_{m\to-\infty} H \max\{f,m\}
= Hf  \quad \text{a.e.\ in }  \Om.
\end{equation}
By Theorem~\ref{thm-Perron}, $\uP f$ is an 
$\eR$-continuous
function in $\Om$.
On the other hand, $Hf$ is $\LL$-harmonic and thus continuous in $\Om$. 
Hence the inequality $\uP f \le Hf$
holds everywhere in $\Om$.
Applying this
also to $-f$, and using \eqref{eq-lP<=uP}, shows that
\[
\uP f \le Hf = -H(-f) \le -\uP(-f) = \lP f \le \uP f
\quad \text{in } \Om. 
\]
On the other hand $P f= f=Hf$ in $\Omc$.
It also follows (upon letting $h\equiv 0$) that $Pg=Hg$ in $\Rn$.
Moreover, $Pg=Hg=Hf=Pf$ in $\Om$, while
$Pf=f=g+h=Pg+h$ in $\Omc$.
\end{proof}

We are now ready to prove Theorem~\ref{thm-Perron-res-cont}.
When $k(x,y) = |x-y|^{-n-sp}$ and $h \equiv 0$ this was shown for bounded 
$g \in C(\R^n)$, such that $\lim_{x \to \infty} g(x)$ exists,
in Lindgren--Lindqvist~\cite[Theorem~1]{LL17}.

\begin {proof}[Proof of Theorem~\ref{thm-Perron-res-cont}]
By Tietze's extension theorem, we can extend $g$ so that $g \in C(\clOm)$.

Let $\eps>0$. 
Then there is $\gt\in\Lip(\clOm)$ such that $|g-\gt|<\eps$ in $\clOm$.
Let $L$ be the Lipschitz constant of $\gt$.
For each $x\in\clOm$ there is $0<r_x\le \eps/L$ such that
$|g-g(x)|<\eps$ in $B(x,r_x)$.
By compactness, 
\[
\clOm \subset \Om_\eps := \bigcup_{i=1}^N B(x_i,r_{x_i}).
\]
Using the McShane extension, see
e.g.\  Heinonen~\cite[Theorem~6.2]{Heinonen},
$\gt$ can be extended from $\clOm$ to an $L$-Lipschitz function (also
denoted by $\gt$) on $\Om_\eps$.
Then for all $x\in \Om_\eps\setm \Om$ and  $x_i$ 
such that $x\in B(x_i,r_{x_i})$,
\[
|g(x) - \gt(x)| \le |g(x) - g(x_i)| + |g(x_i) - \gt(x_i)| + |\gt(x_i) - \gt(x)|
< 3\eps.
\]
Let also $\gt=g$ in $\Om_\eps^c$.
Then $|g-\gt|<3\eps$ in $\R^n$.
Moreover, $\gt \in  \Vsp(\Om)$.
Indeed, there is $\de>0$ such that $|x-y|\ge\de$ for all
$x\in\Om$ and $y\in\Om_\eps^c$.
Hence, because $\Om$ is bounded and $\gt$ is Lipschitz in $\Om_\eps$
and measurable in $\Rn$,
\begin{align*}
[\gt]_{V^{s, p}(\Omega)}^p
&\le \int_{\Om} \int_{\Om_\eps} \frac{|\gt(x)-\gt(y)|^p}{|x-y|^{n+sp}} \,dy \,dx 
          + \int_{\Om} (2 \|\gt\|_{L^\infty(\R^n)})^p \int_{B(x,\de)^c} \frac{dy \,dx}{|x-y|^{n+sp}}   \\
 &\le \int_{\Om} \int_{\Om_\eps} \frac{(L|x-y|)^p}{|x-y|^{n+sp}} \,dy \,dx 
          + \int_{\Om} (2 \|\gt\|_{L^\infty(\R^n)})^p C\de^{-sp} \,dx< \infty.
\end{align*}

Thus for each $j=1,2,\dots$\,, there are $g_j\in  \Vsp(\Om)$ such that 
$|g-g_j|<2^{-j}$ in $\R^n$.
Extend $h$ so that $h(x)=0$ for $x \in \Om$.
By Theorem~\ref{thm-Hg=Pg}, $g_j$ and $f_j:=g_j+h$ are resolutive and 
$P f_j = Pg_j + h\chi_{\Om^c}$ in $\R^n$.
Using \eqref{eq-lP<=uP} we get, with $f=g+h$,
\[
P f_j -2^{-j} \le  \lP f \le  \uP f   
\le P f_j + 2^{-j} 
\quad \text{in } \R^n.
\]
Similar inequalities hold with $f$ and $f_j$ replaced by $g$ and $g_j$.
Letting $j\to\infty$ then shows that 
\[
 Pf  =\lim_{j \to \infty} P f_j  =\lim_{j \to \infty} P g_j 
+  h\chi_{\Om^c} = Pg  + h\chi_{\Om^c}
\quad \text{in } \R^n.
\]
Since $Pf$ and $Pg$ are bounded in $\Om$, it follows
from Theorem~\ref{thm-Perron-meas} that they are $\LL$-harmonic in $\Om$.
\end{proof}

\begin{remark}
If \ref{P-b} in Definition~\ref{def-Perron} were replaced by
\begin{enumerate}
\renewcommand{\theenumi}{\textup{(\alph{enumi}$'$)}}%
\setcounter{enumi}{\value{saveenumi}}
\item \label{P-b'}
$u$ is bounded from below in $\Rn$,
\end{enumerate}
we would immediately get the equality
\begin{equation*}
\uP g  =  \lim_{m\to-\infty} \uP \max\{g,m\}.
\end{equation*}
With \ref{P-b} in Definition~\ref{def-Perron} we only have a trivial inequality, which however
is enough for our purposes.
(It was used in~\eqref{eq-lim-max-f,m}.)

On the other hand, with \ref{P-b'} we would have  to restrict
Theorem~\ref{thm-Perron-meas} to functions $g \in L^{p-1}_{sp}(\Omega^c)$
that are in addition bounded from below.
This is the main reason for our choice of~\ref{P-b}.
\end{remark}

\section{Perron regularity}\label{sec-perron-reg}

Now we are ready to show that Sobolev regularity is equivalent to Perron regularity.
This also enables us to show that the existence of a barrier characterizes regularity.
Along the way we
show that regularity for obstacle problems is the same as
for the Dirichlet problem, see \ref{b-obst} in Theorem~\ref{thm-main-reg-Perron} below.
Here is the definition of barriers.

\begin{deff}
A function $u:\R^n \to [0,\infty]$ is a \emph{barrier} (relative to $\Omega$) at $x_0\in\bdy\Om$ if
\begin{enumerate}
\item
$u$ is $\LL$-superharmonic in $\Om$,
\item
$\lim_{\Om \ni x \to x_0} u(x)=0$,
\item 
$\inf_{B(x_0,r)^c} u > 0$ for all $r>0$.
\end{enumerate}
\end{deff}

We will also need the following fact.

\begin{lem} \label{lem-obst-KL}
Let $x_0 \in \bdy \Om$ and $d_{x_0}(x):=\min\{1,|x-x_0|\}$.
Then the $\K_{d_{x_0},d_{x_0}}(\Om)$-obstacle problem has a unique solution $u$
which is  continuous in $\Om$.
Moreover 
$u$ is $\LL$-superharmonic in $\Om$ and 
$\LL$-harmonic in 
$G=\{x \in \Om : u(x) > d_{x_0}(x)\}$.
\end{lem}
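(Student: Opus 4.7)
The plan is to first apply the existing obstacle-problem machinery to produce a lsc-regularized solution, then establish $\LL$-harmonicity on the noncoincidence set $G$ by a direct test-function argument, and finally deduce continuity on all of $\Om$ either by invoking the continuity theory for obstacle problems with continuous obstacles or by an independent truncation-and-comparison argument.

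\emph{Step 1 (setup).} First I would check that $d_{x_0}$ is bounded by $1$ and $1$-Lipschitz on $\R^n$, so (since $\Om$ is bounded) a routine split of the double integral into the near-diagonal and far-away parts shows $d_{x_0}\in \Vsp(\Om)\cap C(\R^n)\cap L^{p-1}_{sp}(\R^n)$. Trivially $d_{x_0}\in\K_{d_{x_0},d_{x_0}}(\Om)$, so the class is nonempty and Theorem~\ref{thm-obst-KL-solv} produces a unique (up to measure zero) solution whose lsc-regularization $u=\uhat$ is the unique lsc-regularized solution. Theorem~\ref{thm:KKP17} then gives that $u$ is $\LL$-superharmonic in~$\Om$. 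Uniqueness of the continuous solution is immediate: any continuous solution coincides with its own lsc-regularization.

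\emph{Step 2 ($\LL$-harmonicity on $G$).} The set $G=\{u>d_{x_0}\}\cap\Om$ is open, since $u$ is lsc in $\Om$ and $d_{x_0}$ is continuous, so $u-d_{x_0}$ is lsc in $\Om$. For any $\phi\in C_c^\infty(G)$ the function $u-d_{x_0}$ is lsc and strictly positive on the compact set $\supp\phi$, hence $\delta:=\inf_{\supp\phi}(u-d_{x_0})>0$. Taking $\eps>0$ with $\eps\|\phi\|_\infty\le\delta$ gives $u\pm\eps\phi\in\K_{d_{x_0},d_{x_0}}(\Om)$; the variational inequality then yields $\pm\E(u,\eps\phi)\ge 0$, so $\E(u,\phi)=0$ and $u$ is a weak solution in $G$. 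By the interior H\"older regularity from Di Castro--Kuusi--Palatucci~\cite{DCKP16}, $u$ has a continuous representative $v$ in $G$; but $u$ is lsc-regularized and equals $v$ a.e., so $u(x)=\essliminf_{y\to x}v(y)=v(x)$ for every $x\in G$. Thus $u=v$ is $\LL$-harmonic in~$G$.

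\emph{Step 3 (continuity in $\Om$).} Since $u$ is lsc-regularized in $\Om$ and $d_{x_0}$ is continuous, the a.e.\ inequality $u\ge d_{x_0}$ upgrades to a pointwise inequality in $\Om$. Therefore on $\Om\setm G$ we have $u=d_{x_0}$, which is continuous, and Step 2 gives continuity on $G$. It remains to show continuity at $x_1\in \bdy G\cap\Om$, where $u(x_1)=d_{x_0}(x_1)=:C$ and lsc gives $\liminf_{x\to x_1}u(x)\ge u(x_1)$. I expect this to be the main obstacle. My first choice would be to cite the continuity result for obstacle problems with continuous obstacles from Korvenp\"a\"a--Kuusi--Palatucci~\cite{KKP16}, which applies directly because $d_{x_0}\in C(\R^n)$. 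If a self-contained argument is preferred, I would argue by contradiction: assume $\limsup_{x\to x_1}u(x)>C+\eps$ for some $\eps>0$, pick $r>0$ so small that $B=B(x_1,r)\Subset\Om$ and $d_{x_0}<C+\eps/2$ on $B$, and define
\[
v(x)=\begin{cases}\min\{u(x),C+\eps\},& x\in B,\\ u(x),& x\in\R^n\setm B.\end{cases}
\]
Then $v\ge d_{x_0}$ everywhere, $v\in\Vsp(\Om)$ with $v-d_{x_0}\in\Vspo(\Om)$, and pasting shows that $v$ is a supersolution in~$\Om$; its lsc-regularization $\vhat$ is $\LL$-superharmonic in $\Om$ by Theorem~\ref{thm:KKP17}, still lies in $\K_{d_{x_0},d_{x_0}}(\Om)$, and satisfies $\vhat\le u$. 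The variational inequality with test function $v$ together with the supersolution inequality for $\vhat$ with nonnegative test function $u-\vhat\in\Vspo(\Om)$ give $\E(u-\vhat,u-\vhat)\le 0$, whence $u=\vhat$ a.e.\ in $\R^n$; since both are lsc-regularized in $\Om$, equality is pointwise. This contradicts $\vhat(x_n)\le C+\eps<u(x_n)$ along a sequence $x_n\in G\cap B$ with $x_n\to x_1$ and $u(x_n)>C+\eps$ (such a sequence exists by continuity of $u$ in $G$ and the assumed strict limsup). Hence $\limsup_{x\to x_1}u(x)\le u(x_1)$, completing the proof of continuity.

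\emph{Main obstacle.} The delicate point is Step 3: verifying continuity at boundary points of the coincidence set. The self-contained route requires a pasting fact for $\LL$-superharmonic functions that modify $u$ only inside a ball $B\Subset\Om$ (conceptually similar to Lemma~\ref{lem-ut-Omc-change} but for interior balls), together with the ``smallest supersolution in $\K$'' characterization of the obstacle-problem solution. Citing~\cite{KKP16} sidesteps this, but if a direct proof is desired within the paper, this pasting lemma and the $\E(w,w)\ge 0$ comparison are where the real work lies.
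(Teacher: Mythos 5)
Your proposal is correct in its main line, but it is worth comparing it to the paper's much shorter proof, which is almost entirely a chain of citations: existence and the unique lsc-regularized solution come from Theorem~\ref{thm-obst-KL-solv}, continuity in $\Om$ is cited from Kim--Lee~\cite[Theorem~4.12]{KL}, $\LL$-harmonicity in $G$ from Kim--Lee~\cite[Corollary~4.10]{KL}, and $\LL$-superharmonicity from Theorem~\ref{thm:KKP17}. Your Steps~1 and~3 (Plan~A) match this template in substance, except that for continuity you cite Korvenp\"a\"a--Kuusi--Palatucci~\cite{KKP16} rather than~\cite{KL}; be aware that Section~\ref{sec-D-obstacle} deliberately follows the Kim--Lee formulation of the obstacle problem ``which is more suitable for our purposes'', so strictly one should either cite \cite[Theorem~4.12]{KL} or verify that the \cite{KKP16} continuity theorem transfers to the $\K_{\psi,g}$-class used here. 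Your Step~2 is a genuine departure: instead of citing \cite[Corollary~4.10]{KL} you give the classical two-sided variation $u\pm\eps\phi\in\K_{d_{x_0},d_{x_0}}(\Om)$ for $\phi\in C_c^\infty(G)$ with $\eps\|\phi\|_\infty\le\inf_{\supp\phi}(u-d_{x_0})$, deducing $\E(u,\phi)=0$ and then invoking interior H\"older regularity plus the lsc-regularization identity to upgrade from a.e.\ to everywhere. That argument is correct and self-contained; it buys independence from one external citation at the cost of a few more lines, while the paper's route keeps the proof to four sentences.

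One caution about your self-contained ``Plan~B'' for Step~3: the pasting step (that $v=\min\{u,C+\eps\}$ in an interior ball $B\Subset\Om$, $v=u$ outside, remains a supersolution in $\Om$) is not available in the paper and is genuinely nontrivial for nonlocal operators, because supersolutions are not a local notion and the analogue of the local pasting lemma does not transfer verbatim. You flag this yourself, so it is not a gap in the submitted proof if Plan~A is used, but as written Plan~B is not a complete argument. Also, in Plan~B the quantity you want to show is nonpositive is the monotone pairing $\E(u,u-\vhat)-\E(\vhat,u-\vhat)$, not $\E(u-\vhat,u-\vhat)$ (which is automatically $\ge 0$ and cannot be bounded this way since $\E$ is nonlinear in its first slot); the conclusion $u=\vhat$ a.e.\ still follows from strict monotonicity of $t\mapsto|t|^{p-2}t$, but the intermediate expression should be corrected.
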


\begin{proof}
By Theorem~\ref{thm-obst-KL-solv}, there is a 
unique lsc-regularized solution $u$ of the $\K_{d_{x_0},d_{x_0}}(\Om)$-obstacle problem.
That $u$ is continuous in $\Om$ follows directly from Theorem~4.12
in Kim--Lee~\cite{KL}, 
whereas Corollary~4.10 therein shows that $u$ is $\LL$-harmonic in~$G$.
By Theorem~\ref{thm-obst-KL-solv}, $u$ is a supersolution and since it is
lsc-regularized it is $\LL$-superharmonic by Theorem~\ref{thm:KKP17}.
\end{proof}

\begin{thm}\label{thm-main-reg-Perron}
Let $x_0 \in \bdy \Om$ and $d_{x_0}(x):=\min\{1,|x-x_0|\}$.
Then the following are equivalent\/\textup{:}
\begin{enumerate}
\item \label{b-reg}
  $x_0$ is Sobolev regular.
\item \label{b-bdd}
$x_0$ is Perron regular, i.e.
  \[
    \lim_{\Om \ni x \to x_0} P g(x)=g(x_0)
  \]  
for every bounded $g \in C(\Omc)$. 
\item \label{b-contx0}
  \[
    \lim_{\Om \ni x \to x_0} \uP g(x)=g(x_0)
  \]  
for every bounded $ g:\Omc\to \R$ that 
is continuous at $x_0$.
\item \label{b-d}
  \[
    \lim_{\Om \ni x \to x_0} Pd_{x_0}(x)=0.
  \]  
\item \label{b-barrier}
There is a barrier  at $x_0$.
\item \label{b-obst-d}
The continuous solution $u$ of the 
$\K_{d_{x_0},d_{x_0}}(\Om)$-obstacle problem, provided by Lemma~\ref{lem-obst-KL},
satisfies
\[
    \lim_{\Om \ni x \to x_0} u(x)=0.
\]
In particular, this $u$ is a continuous barrier at $x_0$.
\item \label{b-obst}
For every bounded $g \in \VspOm$ 
that is
continuous at $x_0$ and for every
$\psi:\Om\to [-\infty,\infty)$, which is bounded from above and
such that $\K_{\psi,g}(\Om) \ne \emptyset$
and $\limsup_{x \to x_0} \psi(x) \le g(x_0)$,
it is true that the lsc-regularized solution $u$ of the $\K_{\psi,g}(\Om)$-obstacle  problem
satisfies
\[
    \lim_{\Om \ni x \to x_0} u(x)=g(x_0).
\]
\end{enumerate}  
\end{thm}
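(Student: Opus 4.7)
The plan is to close a ring of implications, beginning with the boundary-limit conditions \ref{b-reg}, \ref{b-d}, \ref{b-bdd}, \ref{b-contx0}. Since $d_{x_0}\in\VspOm\cap C(\Rn)$ is bounded, Theorem~\ref{thm-Hg=Pg} (with $h\equiv 0$) gives $Pd_{x_0}=Hd_{x_0}$, so \ref{b-reg}\eqv\ref{b-d} is immediate from Theorem~\ref{thm-Sobolev-reg}\ref{a-d}. For \ref{b-reg}\imp\ref{b-contx0}, subtract $g(x_0)$ to assume $g(x_0)=0$; given $\eps>0$ pick $r_\eps\in(0,1]$ with $|g|\le\eps$ on $B(x_0,r_\eps)\cap\Omc$ and set $M:=\sup_{\Omc}|g|$. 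The majorant $\phi_\eps:=\eps+(M/r_\eps)d_{x_0}$ lies in $\VspOm\cap C(\Rn)$ and satisfies $\phi_\eps\ge g$ on $\Omc$, so Theorem~\ref{thm-Hg=Pg} together with Theorem~\ref{thm-Sobolev-reg}\ref{a-contx0} yields $\lim_{x\to x_0}P\phi_\eps(x)=\eps$, while monotonicity~\eqref{eq-Pg1-Pg2} gives $\uP g\le P\phi_\eps$; hence $\limsup\uP g\le\eps$, and letting $\eps\to 0$ and repeating with $-g$ forces $\lim Pg=0$. Finally, \ref{b-contx0}\imp\ref{b-bdd} follows from Theorem~\ref{thm-Perron-res-cont}, and \ref{b-bdd}\imp\ref{b-d} by specialization to $g=d_{x_0}$.

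Next I bring in the barrier and obstacle conditions. The implication \ref{b-obst}\imp\ref{b-obst-d} is the specialization $\psi=g=d_{x_0}$. For \ref{b-obst-d}\imp\ref{b-barrier}, the continuous solution $u$ from Lemma~\ref{lem-obst-KL} is $\LL$-superharmonic and satisfies $u\ge d_{x_0}$ pointwise on $\Rn$ (from the obstacle inequality, continuity in $\Om$, and $u=d_{x_0}$ on $\Omc$), so $\inf_{B(x_0,r)^c}u\ge\min\{1,r\}>0$ for every $r>0$, and condition~\ref{b-obst-d} supplies the limit that makes $u$ a barrier. For \ref{b-barrier}\imp\ref{b-contx0}, again reduce to $g(x_0)=0$, pick $r$ and $\eps$ as above (with a slight shrinking of $r$ to accommodate the closed shell), let $c:=\inf_{B(x_0,r)^c}u>0$, and consider $\phi:=\eps+(M/c)u$. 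Positive scaling and constant shifts preserve $\LL$-superharmonicity (as $\E(\alpha v+\beta,\varphi)=\alpha^{p-1}\E(v,\varphi)$), so $\phi$ is $\LL$-superharmonic in $\Om$, bounded below, and $\phi\ge g$ on $\Omc$ (using $\phi\ge\eps$ inside $B(x_0,r)$ and $\phi\ge\eps+M$ outside), while $\liminf_{\Om\ni y\to x}\phi(y)\ge g(x)$ at every $x\in\bdy\Om$; thus $\phi\in\UU_g$, whence $\uP g\le\phi$ and $\limsup_{x\to x_0}\uP g(x)\le\eps\to 0$. Applying the argument to $-g$ completes the implication.

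The remaining implication \ref{b-reg}\imp\ref{b-obst} is the core of the proof; since its specialization $\psi=g=d_{x_0}$ gives \ref{b-obst-d}, the previous paragraph then furnishes a barrier and closes the ring. Let $u$ be the lsc-regularized solution of $\K_{\psi,g}(\Om)$ and reduce to $g(x_0)=0$. The lower bound $u\ge Hg$ from Theorem~\ref{thm-comp-obs} (with $\psi_1=-\infty$) combined with Theorem~\ref{thm-Sobolev-reg}\ref{a-contx0} gives $\liminf_{\Om\ni x\to x_0}u(x)\ge 0$. For the upper bound I analyse the non-contact set $G:=\{x\in\Om:u(x)>\psi(x)\}$, on which $u$ is $\LL$-harmonic by the obstacle theory underlying Lemma~\ref{lem-obst-KL} (cf.\ Kim--Lee~\cite{KL}). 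If $x_0\notin\clG$, then near $x_0$ every $x\in\Om$ lies in the contact set, where $u=\psi$, so $u(x)=\psi(x)\to 0$ (combining $\limsup_{x\to x_0}\psi\le 0$ with the lower bound). Otherwise $x_0\in\bdy G\cap\bdy\Om$, and Corollary~\ref{cor-local-subset}(b) makes $x_0$ Sobolev regular for $G$; moreover $u$ coincides with $H_G u$ on $G$ by the uniqueness of Sobolev solutions (since $u$ is $\LL$-harmonic in $G$ and $u-u\equiv 0\in\Vspo(G)$), and the boundary function $u|_{G^c}$ is bounded and continuous at $x_0$ with value $0$ (on $\Omc$ by continuity of $g$, on the contact set by the same squeeze argument), so Theorem~\ref{thm-Sobolev-reg}\ref{a-contx0} applied in $G$ yields $\lim_{G\ni x\to x_0}u(x)=0$. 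The main obstacle I expect is verifying that $u$ is bounded on $\Rn$ and that $u|_{G^c}$ truly has the claimed continuity at $x_0$; the latter hinges on the lower bound forcing $\psi\to 0$ along contact-set sequences, which in turn relies on Sobolev regularity of $x_0$ for $\Om$.
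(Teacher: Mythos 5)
Your ring is assembled mostly correctly, and several implications are handled either as in the paper or via legitimate variants: \ref{b-reg}\eqv\ref{b-d} via Theorem~\ref{thm-Hg=Pg} and Theorem~\ref{thm-Sobolev-reg}, the majorant argument for \ref{b-reg}\imp\ref{b-contx0}, resolutivity for \ref{b-contx0}\imp\ref{b-bdd}, and the barrier-based majorant for \ref{b-barrier}\imp\ref{b-contx0} (a slight elaboration of the paper's simpler \ref{b-barrier}\imp\ref{b-d}) are all fine. The argument $u\ge d_{x_0}$ for \ref{b-obst-d}\imp\ref{b-barrier} is correct.

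The genuine gap is in your direct attack on \ref{b-reg}\imp\ref{b-obst}. For a general obstacle $\psi:\Om\to[-\infty,\infty)$ that is merely bounded from above, the lsc-regularized solution $u$ is lower semicontinuous but typically not continuous, and the non-contact set $G=\{x\in\Om:u(x)>\psi(x)\}$ need not be open (this requires $\psi$ to be at least upper semicontinuous). Consequently, the Kim--Lee results invoked via Lemma~\ref{lem-obst-KL} --- which establish continuity of $u$ and $\LL$-harmonicity of $u$ on the non-contact set --- are not available in this generality; Lemma~\ref{lem-obst-KL} is stated and proved only for the continuous obstacle $d_{x_0}$. So the claims that $u$ is $\LL$-harmonic in $G$, that $u=H_Gu$, and that Corollary~\ref{cor-local-subset} and Theorem~\ref{thm-Sobolev-reg} can be applied in $G$ all rest on hypotheses that are not verified for general $\psi$. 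The boundedness issue you flag is a further, but secondary, problem.

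The paper sidesteps all of this by proving \ref{b-reg}\imp\ref{b-obst-d} first (where the obstacle is $d_{x_0}$, hence continuous and covered by Lemma~\ref{lem-obst-KL}), and then deducing \ref{b-obst} from \ref{b-obst-d} by pure comparison: given $\eps>0$, choose $m$ with $g\le md_{x_0}+\eps$ on $\R^n$ and $\psi\le md_{x_0}+\eps$ in $\Om$; note that $mv+\eps$ (where $v$ solves the $\K_{d_{x_0},d_{x_0}}(\Om)$-obstacle problem) solves the $\K_{md_{x_0}+\eps,\,md_{x_0}+\eps}(\Om)$-obstacle problem since $\E(\alpha v+\beta,\varphi)=\alpha^{p-1}\E(v,\varphi)$; Theorem~\ref{thm-comp-obs} then gives $u\le mv+\eps$, so $\limsup_{\Om\ni x\to x_0}u(x)\le\eps$ by \ref{b-obst-d}, while $u\ge Hg$ and Theorem~\ref{thm-Sobolev-reg} supply the matching lower bound. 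This avoids any structural analysis of $u$ for the general obstacle, and in particular requires neither openness of $G$ nor continuity or boundedness of $u$. You should replace your direct proof of \ref{b-reg}\imp\ref{b-obst} with this comparison route.
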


\begin{proof}
\ref{b-bdd}\imp\ref{b-d}
This is trivial.

\ref{b-d}\imp\ref{b-contx0}
We may assume that $g(x_0)=0$. Let $\eps >0$.
Then there is $m>0$ such that 
$g \le md_{x_0}+\eps$ in $\Omc$.
By definition,
\[
  \limsup_{\Om \ni x \to x_0} \uP g(x) 
  \le     \lim_{\Om \ni x \to x_0} m P d_{x_0}(x)  + \eps=\eps.
  \]
Letting $\eps \to 0$ shows that 
\[
 \limsup_{\Om \ni x \to x_0} \uP g(x) \le 0.
\]
Applying this to $-g$, and also using \eqref{eq-lP<=uP},  shows that 
\[
  \liminf_{\Om \ni x \to x_0} \uP g(x)
  \ge \liminf_{\Om \ni x \to x_0} \lP g(x) \ge  0.
\]

\ref{b-contx0}\imp\ref{b-bdd}
This is immediate since every $g$ satisfying \ref{b-bdd} is resolutive,
by Theorem~\ref{thm-Perron-res-cont}.

\ref{b-reg}\eqv\ref{b-d}
Since $Pd_{x_0}=Hd_{x_0}$, by Theorem~\ref{thm-Hg=Pg}, this
follows directly from
the equivalence
\ref{a-unbdd}\eqv\ref{a-d}
in Theorem~\ref{thm-Sobolev-reg}.

\ref{b-reg}\imp\ref{b-obst-d} 
By Lemma~\ref{lem-obst-KL}, 
$u$ is $\LL$-superharmonic in $\Om$ and $\LL$-harmonic in 
\[
G:=\{x \in \Om : u(x) > d_{x_0}(x)\},
\]
which is an open set.
It follows 
that $u=H_G d_{x_0}$. 

If $x_0 \in \bdy G$, then it follows 
from Corollary~\ref{cor-local-subset} 
that $x_0$ is Sobolev regular also with respect to $G$.
Hence
\begin{equation*}
     \lim_{G \ni x \to x_0} u(x) = d_{x_0}(x_0)=0
     \quad \text{if } x_0 \in \bdy G. 
\end{equation*}
On the other hand, 
\begin{equation*}
     \lim_{\Om \setm G \ni x \to x_0} u(x) 
  = \lim_{\Om \setm G \ni x \to x_0} d_{x_0}(x) =   d_{x_0}(x_0)=0
     \quad \text{if } x_0 \in \bdy (\Om \setm G). 
\end{equation*}
Thus,
\[
     \lim_{\Om \ni x \to x_0} u(x) = 0.
\]

Moreover, by continuity,
\begin{equation*}
\inf_{B(x_0,r)^c} u \ge \inf_{B(x_0,r)^c} d_{x_0} = \min\{r,1\} > 0 \quad \text{for all }r>0.
\end{equation*}
Thus $u$ is a continuous barrier at $x_0$.

\ref{b-obst-d}\imp\ref{b-barrier} 
This is trivial.

\ref{b-barrier}\imp\ref{b-d}
Let $\eps>0$.
Let $u$ be a barrier at $x_0$ and let $\de= \inf_{B(x_0,\eps)^c} u>0$.
Then $\eps + u/\de \in \UU_{d_{x_0}}(\Om)$ and hence 
\[
0 \le \limsup_{\Om\ni x\to x_0} P d_{x_0} (x)  
\le \eps + \lim_{\Om\ni x\to x_0}u(x)/\de = \eps.
\]
Letting $\eps\to0$ concludes the proof.

\ref{b-obst-d}\imp\ref{b-obst} 
We may assume that $g(x_0)=0$.
Let $\eps>0$.
Then there is $m>0$ such that  $g \le md_{x_0}+\eps$ in $\R^n$ and 
$\psi \le md_{x_0} + \eps$ in $\Om$.
Let $v$ be 
the continuous solution of the $\K_{d_{x_0},d_{x_0}}(\Om)$-obstacle problem.
Since $(g-md_{x_0}-\eps)_\limplus \in V^{s, p}_0(\Om)$,
the comparison principle (Theorem~\ref{thm-comp-obs}) implies that
$u \le mv +\eps$ in $\Om$ and thus
by \ref{b-obst-d},
\[
    \limsup_{\Om \ni x \to x_0} u(x) 
   \le m   \lim_{\Om \ni x \to x_0} v(x) +\eps 
   = \eps.
\]
Letting $\eps \to0$ shows that 
\[
    \limsup_{\Om \ni x \to x_0} u(x) \le 0.
\]
On the other hand, 
it also follows from the comparison principle (Theorem~\ref{thm-comp-obs})
that  $u \ge H g$.
Since we have already shown that \ref{b-obst-d}\imp\ref{b-reg} it follows
from 
Theorem~\ref{thm-Sobolev-reg} that
\[
    \liminf_{\Om \ni x \to x_0} u(x) \ge 
    \liminf_{\Om \ni x \to x_0} Hg(x) =0.
\]

\ref{b-obst}\imp\ref{b-obst-d}
This is trivial.
\end{proof}

As an application, we can now prove
Theorem~\ref{BBS2-thm-intro}.

\begin{proof}[Proof of Theorem~\ref{BBS2-thm-intro}]
By the Kellogg property (Theorem~\ref{thm-kellogg})
together with Theorems~\ref{thm-Perron-res-cont} and~\ref{thm-main-reg-Perron},
$u = P g$ fulfills \eqref{eq-BBS2-thm}, which shows the existence.

For the uniqueness we proceed as follows.
By adding a sufficiently
large constant to both $g$ and $u$,
and then rescaling them 
simultaneously we may assume
without loss of generality that $0 \le u \le 1$ and
$0\le g\le 1$. 
Hence $u \in \UU_{g-\chi_E}$ and $u \in \LL_{g+\chi_E}$,
where $E$ is the exceptional set in~\eqref{eq-BBS2-thm}.
Therefore,
by Theorem~\ref{thm-Perron-res-cont}, we see that 
\[
u \ge \uP (g-\chi_E)=P g = \lP (g+\chi_E) \ge u
 \quad \text{in } \Om.
\]
On the other hand, $u=g=Pg$ outside $\Om$.
\end{proof}

We can also deduce the following result.

\begin{prop}
Let $g \in \VspOm$ be bounded and such that 
$g|_\Omc$ is continuous at all points in $\bdy \Om \setm E$,
where $\Csp(E)=0$.
Then there exists a unique bounded $\LL$-harmonic function $u$ in $\Om$
such that $u \equiv g$ outside $\Om$ and 
\begin{equation} \label{eq-BBS2-thm-2}
         \lim_{\Om \ni x\to x_0} u(x) = g(x_0)
         \quad \text{for quasievery } x_0 \in \bdy \Om.
\end{equation}
Moreover $u=P g=Hg$.
\end{prop}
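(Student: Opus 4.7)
The plan is to obtain existence by identifying the desired $u$ with $Pg = Hg$, and then invoke the Kellogg property together with the characterization of Sobolev regular points to establish the quasi-everywhere boundary limit. For uniqueness, I will sandwich any candidate solution between Perron solutions of two perturbations $g \pm \chi_{E'}$ of the boundary data and use the invariance identity from Theorem~\ref{thm-Hg=Pg}.

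For existence I would set $u:=Pg=Hg$, which is provided directly by Theorem~\ref{thm-Hg=Pg} (whose hypothesis—namely $g\in\VspOm$ bounded and continuous on $\bdy\Om\setm E$ with $\Csp(E)=0$—is exactly ours). The same theorem gives that $u$ is $\LL$-harmonic in $\Om$, while boundedness follows from the comparison principle (Corollary~\ref{cor-comparison-sol}) against the constants $\sup_{\R^n} g$ and $\inf_{\R^n} g$. Let $I_\Om$ be the set of Sobolev irregular points; by the Kellogg property (Theorem~\ref{thm-kellogg}), $\Csp(I_\Om)=0$, so $\Csp(E\cup I_\Om)=0$. For each $x_0\in\bdy\Om\setm(E\cup I_\Om)$ the bounded function $g|_{\Omc}$ is continuous at $x_0$ and $x_0$ is Sobolev regular; hence the equivalence \ref{b-reg}\eqv\ref{b-contx0} in Theorem~\ref{thm-main-reg-Perron} yields $\lim_{\Om\ni x\to x_0}u(x)=g(x_0)$.

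For uniqueness, suppose $v$ is another bounded $\LL$-harmonic function in $\Om$ with $v\equiv g$ outside $\Om$ and with $\lim_{\Om\ni x\to x_0}v(x)=g(x_0)$ for $x_0\in\bdy\Om\setm E''$, where $\Csp(E'')=0$. Set $E':=E\cup E''$, so that $\Csp(E')=0$, and normalize by adding a constant and rescaling so that $0\le g,v\le 1$ in $\R^n$. Let $h:=\chi_{E'}$; since $h=0$ in $(E')^c$ with $\Csp(E')=0$, Theorem~\ref{thm-Hg=Pg} applied with this $h$ gives the resolutivity identities
\[
P(g-h)=Pg-h\chi_{\Omc}
\quad\text{and}\quad
P(g+h)=Pg+h\chi_{\Omc}\quad\text{in } \R^n.
\]
I will check that $v\in\UU_{g-h}$: $v$ is continuous and bounded in $\Om$, hence $\LL$-superharmonic and bounded below there; in $\Omc\setm E'$ one has $v=g=(g-h)$, while on $E'$ the inequality $v\ge 0\ge g-1=g-h$ holds; at $x_0\in\bdy\Om\setm E'$, $\lim v=g(x_0)=(g-h)(x_0)$, and at $x_0\in E'\cap\bdy\Om$, $\liminf v\ge 0\ge(g-h)(x_0)$. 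Therefore $v\ge\uP(g-h)=Pg-h\chi_{\Omc}$, which gives $v\ge Pg$ in $\Om$ since $\chi_{\Omc}\equiv 0$ there. The symmetric argument, using that $v$ is $\LL$-subharmonic and that $v\le 1\le g+\chi_{E'}$ in $\Omc$, shows $v\in\LL_{g+h}$ and hence $v\le Pg$ in $\Om$. Combining the two inequalities yields $v=Pg=Hg=u$ in $\Om$.

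The main obstacle is a bookkeeping one rather than a conceptual one: choosing the perturbation $h=\chi_{E'}$ so that it is simultaneously supported on a $\Csp$-null set (so that the invariance identity from Theorem~\ref{thm-Hg=Pg} applies) and large enough to absorb the possible bad behavior of $v$ at points of $E'\cap\bdy\Om$. The normalization $0\le v\le 1$ is essential for this, because otherwise the size of $h$ would have to depend on the a priori bound for $v$, which would complicate verification of the Perron-class membership.
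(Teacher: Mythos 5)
Your proposal is correct and follows essentially the same route as the paper: existence via $u=Pg=Hg$ from Theorem~\ref{thm-Hg=Pg} together with the Kellogg property and Theorem~\ref{thm-main-reg-Perron}\ref{b-contx0}, and uniqueness by normalizing $0\le v,g\le 1$, sandwiching $v$ between $\uP(g-\chi_{E'})$ and $\lP(g+\chi_{E'})$, and invoking the invariance in Theorem~\ref{thm-Hg=Pg} (the paper states this step only as ``almost verbatim as Theorem~\ref{BBS2-thm-intro}, using Theorem~\ref{thm-Hg=Pg} in place of Theorem~\ref{thm-Perron-res-cont}''). One small imprecision: the chain ``$v\le 1\le g+\chi_{E'}$ in $\Omc$'' does not hold on $\Omc\setm E'$ (there $g+\chi_{E'}=g$ may be $<1$); what you actually need, and what follows from $v=g$ on $\Omc$, is $v\le g+\chi_{E'}$ on $\Omc$, splitting into the two cases as you did correctly for the $\UU_{g-h}$ membership.
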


\begin{proof}
By Theorem~\ref{thm-Hg=Pg}, $g$ is resolutive. 
Moreover, by the Kellogg property (Theorem~\ref{thm-kellogg}),
the set $I_\Om$ of Sobolev irregular points satisfies $\Csp(I_\Om)=0$.
On the other hand, by Theorem~\ref{thm-main-reg-Perron}\ref{b-contx0},
$u = Pg$ fulfills \eqref{eq-BBS2-thm-2} for each 
$x_0 \in \bdy \Om \setm (E \cup I)$.
This yields the existence.

The uniqueness is now proved almost verbatim as in the proof of 
Theorem~\ref{BBS2-thm-intro}, but using
Theorem~\ref{thm-Hg=Pg} instead of
Theorem~\ref{thm-Perron-res-cont}.
\end{proof}

\begin{remark}
Depending on the value of $sp$ we have three different cases.

{\bf Case 1.} $sp>n$.
In this case every point has positive capacity, by Lemma~\ref{lem-sp<=n}.
It thus follows from the Kellogg property (Theorem~\ref{thm-kellogg}) 
that every nonempty bounded open set is regular (i.e.\
all its boundary points are regular).
For Sobolev regularity this also follows directly from
the Wiener criterion (Theorem~\ref{thm-Wiener})
together with Lemma~2.17
in Kim--Lee--Lee~\cite{KLL23},
but for Perron regularity this does not seem to be so straightforward without
appealing to Theorem~\ref{thm-main-reg}.

{\bf Case 2.} $1 < sp \le n$.
It follows from Corollary~5.1.14 in Adams--Hedberg~\cite{AH}
that $\Csp(\bdy \Om)>0$, and hence $\Om$ has some regular boundary 
points by the Kellogg property (Theorem~\ref{thm-kellogg}). 
In fact the set of regular boundary points must be dense
in $\bdy \overline{\Om}$.
(Note that by Triebel~\cite[Theorem p.~172 and Remark~4 pp.~189--190]{Triebel95}
and \cite[Corollary~2.6.8 and Proposition~4.4.4]{AH}
the Bessel potential capacity (denoted
by $C_{s,p}$ in \cite{AH}) is comparable to our Sobolev capacity $\Csp$.)

{\bf Case 3.} $sp \le 1$.
In this case it can happen that $\Csp(\bdy \Om)=0$, 
by Theorem~5.1.9 in~\cite{AH},
in which case the Kellogg property does not yield anything.
Nevertheless, the set of regular boundary points is dense
in $\bdy \overline{\Om}$ also in this case.
Indeed, let $x_0 \in \bdy \overline{\Om}$ and $\eps>0$.
Then there is $y \in \Rn \setm \overline{\Om}$ with $|y-x_0|<\eps$.
Now pick a closest point $x \in \bdy\overline{\Om}$ to $y$, i.e.\
$|x-y|=\min_{z \in \bdy\overline{\Om}} |z-y|$. (This point does not have to be unique.)
Then $x$ satisfies the exterior cone condition and is thus regular
by the Wiener criterion (Theorem~\ref{thm-Wiener}).
Moreover, 
\[
    |x-x_0| \le |x-y| + |y-x_0| \le |x_0-y| + |y-x_0| < 2 \eps.
\]
\end{remark}

\end{document}